\title[Double $L$--groups and doubly slice knots]{Double $L$--groups and doubly slice knots}
\author{Patrick Orson} 
\address{Department of Mathematics\\University of Durham\\United Kingdom}
\email{patrick.orson@durham.ac.uk}
\urladdr{http://www.maths.dur.ac.uk/~vkdx72/}
\DeclareMathOperator{\im}{im}
\DeclareMathOperator{\Hom}{Hom}
\DeclareMathOperator{\Ext}{Ext}
\DeclareMathOperator{\Tor}{Tor}
\DeclareMathOperator{\Ch}{Ch}
\DeclareMathOperator{\ev}{ev}
\DeclareMathOperator{\closure}{cl}
\newcommand{\eps}{\varepsilon}
\def\A{\mathbb{A}}
\def\B{\mathbb{B}}
\def\C{\mathbb{C}}
\def\D{\mathbb{D}}
\def\H{\mathbb{H}}
\def\R{\mathbb{R}}
\def\Z{\mathbb{Z}}
\def\Q{\mathbb{Q}}
\def\NN{{\mathfrak{N}}}
\def\p{{\mathfrak{p}}}
\def\lmat{\left(\begin{smallmatrix}}
\def\rmat{\end{smallmatrix}\right)}
\def\sm{\setminus}
\def\id{\operatorname{id}}
\def\co{\colon\thinspace}
\theoremstyle{plain}
\newtheorem{theorem}{Theorem}[section]
\newtheorem{proposition}[theorem]{Proposition}
\newtheorem{lemma}[theorem]{Lemma}
\newtheorem{corollary}[theorem]{Corollary}
\newtheorem{question}[theorem]{Question}
\theoremstyle{definition}
\newtheorem{definition}[theorem]{Definition}
\newtheorem{example}[theorem]{Example}
\theoremstyle{remark}
\newtheorem*{remark}{Remark}
\newtheorem*{notation}{Notation}
\newcounter{myenum1}
\newenvironment{flushenumerate}{%
  \begin{list}{\arabic{myenum1}.}%
    {\setlength{\leftmargin}{0pt}}%
     \setlength{\labelwidth}{0pt}
     \setlength{\itemindent}{0.5em}
     \setlength{\labelsep}{0.5em}
     \usecounter{myenum1}}%
  {\end{list}} 
\newenvironment{flushenumerate(i)}{
\begin{enumerate}[(i)]
  \setlength{\leftmargin}{0pt}
}{\end{enumerate}}
\begin{document}
\begin{abstract}
We develop a theory of chain complex double-cobordism for chain complexes equipped with Poincar\'{e} duality. The resulting double-cobordism groups are a refinement of the classical torsion algebraic $L$--groups for localisations of a ring with involution. The refinement is analogous to the difference between metabolic and hyperbolic linking forms.

We apply the double $L$--groups in high-dimensional knot theory to define an invariant for doubly slice $n$--knots. We prove that the ``stably doubly slice implies doubly slice'' property holds (algebraically) for Blanchfield forms, Seifert forms and for the Blanchfield complexes of $n$--knots for $n\geq 1$.

\end{abstract}
\maketitle
\begin{abstract}
We develop a theory of chain complex double-cobordism for chain complexes equipped with Poincar\'{e} duality. The resulting double-cobordism groups are a refinement of Ranicki's classical torsion algebraic $L$--groups for localisations of a ring with involution. The refinement is analogous to the difference between metabolic and hyperbolic linking forms.

We apply the double $L$--groups in high-dimensional knot theory to define an invariant for doubly slice $n$--knots. We prove that the `stably doubly slice implies doubly slice' property holds (algebraically) for Blanchfield forms, Seifert forms and for the Blanchfield complexes of $n$--knots for $n\geq 1$.
\end{abstract}

\section{Introduction}\label{sec:intro}

In this paper we develop new algebraic methods in the study of linking forms and in the algebraic cobordism theory of chain complexes equipped with Poincar\'{e} duality. Taking $A$ to be a ring with involution and $S$ a multiplicative subset, we will use our new methods to refine Ranicki's torsion algebraic $L$--groups $L^n(A,S)$. Our refinements are called the \emph{double $L$--groups} $DL^n(A,S)$. Algebraically, our new methods are motivated by Levine's work \cite{MR1004605} on the difference between \emph{metabolic} and \emph{hyperbolic} linking forms. Our main innovation is a generalisation of this algebraic distinction to the setting of chain complexes with Poincar\'{e} duality by means of a notion of algebraic \emph{double-cobordism}.

Our topological motivation, just as Levine's, comes from high-dimensional knot theory. Fox \cite[p.\ 138]{MR0140099} posed the question of which knots $K\co S^n\hookrightarrow S^{n+2}$ are the intersection of an $(n+1)$--unknot and the equator $S^{n+2}\subset S^{n+3}$. Such knots are called \emph{doubly slice}. In the case $n=1$, this question has enjoyed a recent revival of interest in the work of Kim \cite{MR2218756}, Meier \cite{Meier:2014qf} and Livingston-Meier \cite{Livingston:2015kq}. The $n$--dimensional double knot-cobordism group $\mathcal{DC}_n$ is the quotient of the monoid of $n$--knots by the submonoid of doubly slice knots. Using a chain complex knot invariant, we will define a homomorphism from $\mathcal{DC}_n$ to a certain double $L$--group\begin{equation}\label{eq:obstruction}\sigma^{DL}\co \mathcal{DC}_n\to DL^{n+1}(\Lambda,P),\end{equation}where $\Lambda=\Z[z,z^{-1},(1-z)^{-1}]$ and $P$ is the set of Alexander polynomials. In particular our homomorphism uses the entire chain complex of the knot exterior to obstruct the property of being doubly slice.

\subsection{The slice and doubly slice problems}

Detecting doubly slice knots is intimately related to detecting slice knots, as follows. Working in the topologically locally flat category, an (oriented) $n$--knot $K\co S^n\hookrightarrow S^{n+2}$ is called \emph{slice} if it admits a \emph{slice disc}, that is an oriented embedding of pairs $$(D,K)\co(D^{n+1},S^n)\hookrightarrow (D^{n+3},S^{n+2}).$$ The monoid $Knots_n$, of $n$--knots under connected sum, modulo the submonoid of slice $n$--knots is the $n$--dimensional \emph{knot-cobordism group} $\mathcal{C}_n$. So a doubly slice knot is exactly a knot $K$ which admits two \emph{complementary} slice discs $(D_\pm,K)$, that is discs that glue together along $K$ to form the $(n+1)$--unknot.

Most questions that can be asked about slice knots can be asked about doubly slice knots as well, although the answer in the doubly slice case will almost always be more difficult to come by. When $n=1$, there is a still a great deal left to understand about slice knots and the knot-cobordism group (in both the smooth and topological categories). So new results for doubly slice knots here can only go so far without new slice results. In contrast, when $n>1$ Kervaire \cite{MR0189052} and Levine \cite{MR0246314} completely solved the (singly) slice problem in both the smooth and topological categories. They showed that all even-dimensional knots are slice and, using algebraic results of Stoltzfus \cite{MR0467764}, we now know that when $k>0$:\begin{equation}\label{eq:concordance}\mathcal{C}_{2k+1}\cong  \bigoplus_\infty\Z\oplus \bigoplus_\infty(\Z/2\Z)\oplus \bigoplus_\infty(\Z/4\Z).\end{equation}So perhaps there is hope that we can obtain a substantial classification result for high-dimensional \emph{doubly slice} knots. How far does the high-dimensional (singly) slice solution transfer over to the doubly slice question? Certainly not completely. The first stage of the Kervaire--Levine proof requires one to do surgery on a closed knot exterior $X_K=\closure (S^{n+2}\sm K\times D^2)$ to reduce it to a \emph{simple} knot, that is a knot $K'\sim K\in \mathcal{C}_n$, $n>2$, such that $\pi_r(X_{K'})=\pi_r(S^1)$ for $2r<n+2$. Such knots are then entirely classified in $\mathcal{C}_n$ by the Witt class of the Blanchfield form (see Section \ref{sec:DWgroups} for definitions), whence Equation \ref{eq:concordance}. But this is where the doubly slice case differs, as one consequence of high-dimensional Casson--Gordon invariants defined by Ruberman \cite{MR709569, MR933307} is that this surgery process to obtain a simple knot is generally obstructed within $\mathcal{DC}_n$. There is no `double surgery below the middle dimension' and so now Blanchfield forms are certainly insufficient.

This suggests the approach we have taken in this paper -- we work with a different knot invariant called the \emph{Blanchfield complex} (see \ref{subsec:blanchfield}), that encompasses the entire chain complex of the knot exterior and from which the Blanchfield form can be derived. The Blanchfield complex is a symmetric chain complex over $\Z[\Z]$, whose chain homotopy type is a knot invariant and whose class in the codomain of Equation \ref{eq:obstruction} defines the homomorphism $\sigma^{DL}$. We develop an algebraic framework for the study of doubly slice knots via the Blanchfield complex, which encompasses previous systems based on Witt groups. As well as being interesting in their own right, inroads into this high-dimensional doubly slice problem may shed light on the nature of the low-dimensional problem, revealing which features are typical to both and which may be unique to low-dimensions.

\subsection{Chain complex double-cobordism and double $L$--groups}

The new algebra we develop in Section \ref{sec:DLgroups} to analyse the full chain complex of the exterior of a doubly slice knot is based on Ranicki's Algebraic Theory of Surgery \cite{MR560997,MR566491}. This theory is an algebraic analogue to the cobordism of closed, oriented topological manifolds. The objects $(C,\phi)$ of the theory are chain complexes $C$ equipped with some additional structure $\phi$, capturing algebraic symmetries, such as Poincar\'{e} duality. These objects are then considered under a notion of \emph{algebraic} cobordism (see Subsection \ref{subsec:Ltheory} for definitions).

In Subsection \ref{subsec:doub} of this paper, we will define the concept of an algebraic \emph{double-nullcobordism}. An algebraic double-nullcobordism consists of two algebraic nullcobordisms which glue together in \emph{complementary} way, analogous to complementary slice discs for a doubly slice knot (see \ref{def:comp} for precise definition). Double-cobordism groups are then the set of all $(C,\phi)$ modulo the double-nullcobordant $(C,\phi)$. For a ring with involution $A$ and a localisation of this ring $A\hookrightarrow S^{-1}A$, we make make precise the situations where algebraic double-cobordism groups of various types - which we call the \emph{symmetric double $L$--groups} $DL^n(A)$,  \emph{torsion symmetric double $L$--groups} $DL^n(A,S)$, and \emph{ultraquadratic double $L$--groups} $\widehat{DL}_n(A)$ - will be well-defined.

Of course, once you have defined a new group of algebraic invariants, it is important to be able to work with it and to make calculations. In this direction we introduce a new technique called \emph{algebraic surgery above and below the middle dimension} (see \ref{subsec:surgery}), to prove the following skew 2--fold periodicity result in some double $L$--groups:

\medskip

{\bf Theorem (\ref{thm:aboveandbelow} and \ref{cor:aboveandbelow})}\qua {\sl For any ring with involution $A$, which has homological dimension 0, and for $n\geq 0$, there are isomorphisms:}\[\begin{array}{rrcl}
\overline{S}\co &DL^n(A,\eps)&\xrightarrow{\cong} &DL^{n+2}(A,-\eps),\\
\overline{S}\co &\widehat{DL}_n(A,\eps)&\xrightarrow{\cong} &\widehat{DL}_{n+2}(A,-\eps),\end{array}\]{\sl so that for $k\geq 0$} \[\begin{array}{ll}DL^{2k+1}(A,\eps)=0,&DL^{2k}(A,\eps)\cong DL^0(A,(-1)^k\eps),\\ \widehat{DL}_{2k+1}(R,\eps)=0, &\widehat{DL}_{2k}(R,\eps)\cong\widehat{DL}_0(R,(-1)^k\eps).\end{array} \]

\medskip

The question of calculating double $L$--groups under these hypotheses is thus reduced the the problem of calculating the groups in dimension 0, to which we turn in Section \ref{sec:DWgroups}.

In Section \ref{sec:DWgroups} we work with the classical tools of linking forms and Seifert forms. As mentioned, we are able to use these tools to make calculations of double $L$--groups in terms of what we called in a previous paper \emph{Double Witt groups} \cite{OrsonA}. If a form (resp.\ linking form/Seifert form) admits a maximally self-annihilating submodule then it is called \emph{metabolic}. If it admits two such submodules, that are moreover complementary, it is called \emph{hyperbolic}. Witt groups are defined by taking forms modulo stably metabolic forms and double Witt groups are defined by taking forms modulo stably hyperbolic forms. We prove the following:

{\bf Proposition (\ref{prop:isogroups} and \ref{iso})}\qua {\sl For any ring with involution $A$, the 0--dimensional ultraquadratic double $L$--group is isomorphic to the double Witt group of Seifert forms}\[
 \widehat{DL}_0(A,\eps)\cong \widehat{DW}_\eps(A).\]{\sl If there exists a central $s\in A$ such that $s+\overline{s}=1$, the 0--dimensional torsion double $L$--group is isomorphic to the double Witt group of linking forms}\[DL^0(A,S,\eps)\cong DW^\eps(A,S).\]

From this, we are able to compute many double $L$--groups in terms of signature invariants of Seifert forms and of linking forms (see Example \ref{ex:calculation}), using our results in \cite{OrsonA}. In particular, when $A$ is a Dedekind domain, one may apply \cite[Theorem 3.26]{OrsonA} to see that the forgetful functor from the ultraquadratic double $L$--groups of $A$ to the ultraquadratic single $L$--groups of $A$ has (countably) infinitely generated kernel. This gives a first idea of just how big the double $L$-groups are.

\subsection*{The stably doubly slice question}

In our knot theoretical application of double $L$--theory, as well as describing a new algebraic framework for working with doubly slice knots, we prove some new algebraic results related to the stably vs.\ unstably doubly slice question.

By definition, two knots $K,K'$ are equivalent in $\mathcal{DC}_n$ whenever there exist doubly slice knots $J,J'$ such that $K\#J\simeq K'\#J'$. In particular, $K$ vanishes in $\mathcal{DC}_n$ if and only if $K\#J$ is doubly slice for some doubly slice $J$ (we say $K$ is \emph{stably doubly slice}). Arguably the most important question for doubly slice knots is:

\begin{question}\label{q:stablyhypishyp} If an $n$--knot $K$ is stably doubly slice, is it necessarily doubly slice?\end{question} 

An answer to this question would determine whether the double knot-cobordism classes not only obstruct, but moreover characterise doubly slice knots. The algebraic versions of this question are thus interesting for any group-valued doubly slice invariant, such as our $\sigma^{DL}$.

In this spirit we will prove the following set of results. Recall $\Lambda=\Z[z,z^{-1},(1-z)^{-1}]$ and $P$ is the set of Alexander polynomials.

{\bf Theorem (\ref{thm:stablyresults})}\qua {\sl Suppose for $n\geq1$ that an $n$--knot $K$ is stably doubly slice. Then the double $L$--class of the Blanchfield complex $\sigma^{DL}(K)\in DL^{n+1}(\Lambda,P)$ vanishes. If $n=2k+1$ then the Witt classes of the Blanchfield form $\sigma^{DW}(K)\in DW^{(-1)^k}(\Lambda,P)$, and the Witt class of any choice of Seifert form $\sigma_{\widehat{DW}}(K)\in \widehat{DW}_{(-1)^{k+1}}(\Z)$, vanish.}

As a consequence, if there were a stably doubly slice $n$--knot which is not doubly slice, then this would be undetectable by any of the invariants $\sigma^{DL}(K)$, $\sigma^{DW}(K)$, $\sigma_{\widehat{DW}}(K)$. In this paper we work with coefficients $\Z[\Z]$, so we note that Theorem \ref{thm:stablyresults} does not cover, for example, the twisted Blanchfield forms of Cochran-Orr-Teichner \cite{MR1973052}. The possibility of using different fundamental groups is discussed in the closing remarks of the paper.

The result in the case of the Blanchfield form over $\Z[\Z]$ is not new, but is a reproof of a Theorem of Bayer-Fl\"{u}ckiger and Stoltzfus \cite{MR833015} -- we include it because our proof is an application of the techniques of double $L$--theory and as such uses very different methods. Indeed, the `stably hyperbolic implies hyperbolic' results (Corollaries \ref{cor:stablyhypishypseif} and \ref{cor:stablyhypishyp}) which led to Theorem \ref{thm:stablyresults} were a surprising by-product of the development of the double $L$--groups and the low-dimensional double Witt group isomorphisms (Propositions \ref{prop:isogroups} and \ref{iso}) we obtained.

\subsection{Organisation}

In \ref{subsec:conventions} we lay out the algebraic conventions we are using. In \ref{subsec:Ltheory} we will need to recall some elements of Ranicki's Algebraic Theory of Surgery which we require later, and as this is not a common tool, we have tried to give the reader a useful introduction with many references. We pay particular emphasis to the $\eps$--ultraquadratic version of the $L$--theory machinery as there is very little in the literature about this.

In \ref{subsec:doub} we define the ultraquadratic double $L$--groups over a ring with involution $R$, and both the projective and torsion symmetric double $L$--groups over a ring with involution $A$ admitting a central element $s$ such that $s+\overline{s}=1$. In \ref{subsec:surgery} we investigate structure and periodicity results in double $L$--theory via the \emph{skew-suspension} map. We introduce our technique of \emph{algebraic surgery above and below the middle dimension} in order to prove periodicity in certain double $L$--groups.

In Section \ref{sec:DWgroups} we relate the double $L$--groups to the double Witt groups we introduced in \cite{OrsonA}. Firstly, we show how to interpret the 0--dimensional double $L$--groups as double Witt groups. This allows calculation of double $L$--groups for some rings and also establishes `stably hyperbolic implies hyperbolic' results for Seifert forms and linking forms. Secondly, we show (for certain rings) how to directly extract double Witt invariants from a class in an odd-dimensional double $L$--group, which makes the connection between the Blanchfield complex and Blanchfield form later.

In Section \ref{sec:knots} we relate double $L$--theory to the original topological motivation: the doubly slice problem. We recall and elaborate on the construction of Ranicki's \emph{Blanchfield complex} knot invariant. We then prove the claimed doubly slice obstruction of Equation \ref{eq:obstruction} and lay out the consequences of combining this with the algebraic results of Sections \ref{sec:DLgroups} and \ref{sec:DWgroups}.

\subsection{Acknowledgements}

This work follows from the author's PhD thesis at the University of Edinburgh and was supported by the EPSRC. The author would like to thank his advisor Andrew Ranicki for his patient advice and guidance in the preparation of this work. The author would also like to thank the anonymous referee, whose detailed reading and many excellent suggestions have improved this article greatly.

\section{Double $L$--theory}\label{sec:DLgroups}

\subsection{Algebraic conventions and localisation}\label{subsec:conventions}

In the following, $A$ (or sometimes $R$) will be a ring with unit and involution. The involution is denoted\[\overline{\phantom{A}}\co A\to A;\qquad a\mapsto \overline{a}.\]Using the involution we define a way of switching between left and right modules, which will permit an efficient way of describing sesquilinear pairings between left $A$--modules later. A left $A$--module $P$ may be regarded as a right $A$--module $P^t$ by the action \[P^t\times A\to P^t;\qquad (x,a)\mapsto \overline{a}x.\]Similarly, a right $A$--module $P$ may be regarded as a left $A$--module $P^t$. Unless otherwise specified, the term `$A$--module' will refer to a left $A$--module. Given two $A$--modules $P,Q$, the tensor product is an abelian group denoted $P^t\otimes_A Q$. We will sometimes write simply $P\otimes Q$ to ease notation, but the right $A$--module structure $P^t$ is implicit, so that for example $x\otimes ay=\overline{a}x\otimes y$.

In the following, $S\subset A$ will always be a \emph{multiplicative} subset, that is a set with the following properties:

\begin{flushenumerate(i)}
\item $st\in S$ for all $s,t\in S$,
\item $sa=0\in A$ for some $s\in S$ and $a\in A$ only if $a=0\in A$,
\item $\overline{s}\in S$ for all $s\in S$,
\item $1\in S$.
\item For $a\in A, s\in S$ there exists $b,b'\in A$, $t, t'\in S$ such that $at=sb$ and $t'a=b's$.
\end{flushenumerate(i)}The \emph{localisation of $A$ away from $S$} is $S^{-1}A$, the ring with involution formed of equivalence classes of pairs $(a,s)\in A\times S$ under the relation $(a,s)\sim (b,t)$ if and only if there exists $c,d\in A$ such that $ca=db$ and $cs=dt$. We say the pair $(A,S)$ \emph{defines a localisation} and denote the equivalence class of $(a,s)$ by $a/s\in S^{-1}A$. (The use of (v) above, the `two-sided Ore condition', ensures an isomorphism between the left and right localisations $S^{-1}A$ and $AS^{-1}$.) If $P$ is an $A$--module denote $S^{-1}P:=S^{-1}A\otimes_AP$ and write the equivalence class of $(a/s)\otimes x$ as $ax/s$. Similarly, if $f\co P\to Q$ is a morphism of $A$--modules then there is induced a morphism of $S^{-1}A$--modules $S^{-1}f=1\otimes f\co S^{-1}P\to S^{-1}Q$. Generally $i\co P\to S^{-1}P$ is injective if and only if $\Tor^A_1(S^{-1}A/A,P)$ vanishes. This happens, for instance, when $P$ is a projective module. If $S^{-1}P=0$ then the $A$--module $P$ is called \emph{$S$--torsion}, and more generally define the \emph{$S$--torsion of $P$} to be $TP:=\ker(P\to S^{-1}P).$

\subsection*{Torsion modules and duality}

Define a category\[\A(A)=\{\text{finitely generated (\text{f.g.}), projective $A$--modules}\},\] with $A$--module morphisms. An $A$--module $Q$ has \emph{homological dimension $m$} if it admits a resolution of length $m$ by f.g.\ projective $A$--modules, that is there is an exact sequence\[0\to P_m\to P_{m-1}\to\dots\to P_0\to Q\to 0,\]with $P_i$ in $\A(A)$. If this condition is satisfied by all $A$--modules $Q$ we say $A$ is of homological dimension $m$. If $(A,S)$ defines a localisation, define a category \[\H(A,S)=\{ \text{f.g.\ $S$--torsion $A$--modules of homological dimension 1}\}\]with $A$--module morphisms. $\A(A)$ has a good notion of duality, coming from the $\Hom$ functor, and $\H(A,S)$ has a corresponding good notion of `torsion duality' as we now explain.

Given $A$--modules $P$, $Q$, we denote the additive abelian group of $A$--module homomorphisms $f\co P\to Q$ by $\Hom_A(P,Q)$. The \emph{dual} of an $A$--module $P$ is the $A$--module \[P^*:=\Hom_A(P,A)\]where the action of $A$ is $(a,f)\mapsto (x\mapsto f(x)\overline{a})$. If $P$ is in $\A(A)$, then there is a natural isomorphism\[\sm-\co P^t\otimes Q\xrightarrow{\cong}\Hom_A(P^*,Q);\qquad x\otimes y\mapsto (f\mapsto \overline{f(x)}y).\]In particular, using the natural $A$--module isomorphism $P\cong P^t\otimes A$, there is a natural isomorphism \[P\xrightarrow{\cong} P^{**};\qquad x\mapsto (f\mapsto \overline{f(x)}).\]Using this, for any $A$--module $Q$ in $\A(A)$ and $f\in\Hom_A(Q,P^*)$ there is a \emph{dual morphism}\[f^*\co P\to Q^*;\qquad x\mapsto (y\mapsto \overline{f(y)(x)}).\]To proceed similarly in the category $\H(A,S)$, recall the following well-known results in homological algebra:

\begin{lemma}\label{lem:ext1}Suppose $T$ is a f.g.\ $A$--module. \begin{flushenumerate(i)}
\item If $T$ has homological dimension 1 and $T^*=0$ then there is a natural isomorphism of $A$--modules $T\cong \Ext^1_A(\Ext^1_A(T,A),A)$.
\item If $(A,S)$ defines a localisation and $T$ is $S$--torsion, then $T^*=0$ and there is a natural isomorphism\[\Ext^1_A(T,A) \cong\Hom_A(T,S^{-1}A/A).\]
\end{flushenumerate(i)}
\end{lemma}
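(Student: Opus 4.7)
The plan is to handle the two parts separately, as they rest on rather different homological arguments, though both reduce to familiar manipulations of projective resolutions of length one.

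For part (i), I would start by choosing a length-one projective resolution $0 \to P_1 \xrightarrow{d} P_0 \to T \to 0$ with $P_i\in\A(A)$, which exists by the homological dimension assumption. Applying $\Hom_A(-,A)$ gives
\[ 0 \to T^* \to P_0^* \xrightarrow{d^*} P_1^* \to \Ext^1_A(T,A) \to 0,\]
and the vanishing $T^*=0$ turns this into a projective resolution of $\Ext^1_A(T,A)$ of length one. Now I would dualise once more; since the $P_i$ are finitely generated projective, the natural map $P_i\to P_i^{**}$ is an isomorphism, so the double-dualised complex is naturally identified with the original resolution of $T$. Reading off cokernels then yields the natural isomorphism $T\cong\Ext^1_A(\Ext^1_A(T,A),A)$ (and as a by-product $\Ext^1_A(T,A)^*=0$, coming from the injectivity of $d=d^{**}$).

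For part (ii), the vanishing $T^*=0$ is a direct consequence of the multiplicative subset axioms: for any $f\colon T\to A$ and $x\in T$, pick $s\in S$ with $sx=0$ (possible because $T$ is finitely generated $S$-torsion), and then $s\cdot f(x)=0$ forces $f(x)=0$ since elements of $S$ are non-zero-divisors by axiom (ii). The same argument with $A$ replaced by $S^{-1}A$ shows $\Hom_A(T,S^{-1}A)=0$.

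For the isomorphism with $\Hom_A(T,S^{-1}A/A)$, I would apply $\Hom_A(T,-)$ to the short exact sequence
\[0\to A\to S^{-1}A\to S^{-1}A/A\to 0,\]
obtaining the six-term exact sequence whose relevant portion reads
\[0\to\Hom_A(T,S^{-1}A)\to\Hom_A(T,S^{-1}A/A)\to\Ext^1_A(T,A)\to\Ext^1_A(T,S^{-1}A).\]
The left-hand term vanishes by the argument above, and I would kill the right-hand term by the base-change identity
\[\Ext^1_A(T,S^{-1}A)\cong S^{-1}\Ext^1_A(T,A),\]
valid because $S^{-1}A$ is flat over $A$ and the $P_i^*$ are finitely generated projective; since $S^{-1}T=0$, applying $S^{-1}$ to the resolution of $\Ext^1_A(T,A)$ in part (i) shows this localisation vanishes. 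The connecting homomorphism then supplies the claimed natural isomorphism. The one mildly delicate step, and the main place to be careful, is establishing the flat base-change identification for $\Ext^1$ cleanly; everything else is diagram-chasing and the standard Ore-localisation formalism recalled in \ref{subsec:conventions}.
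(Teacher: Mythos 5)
The paper itself offers no proof of this lemma---it is quoted as a standard fact of homological algebra---so your argument is judged on its own terms; it is indeed the standard argument, and part (i) is correct as written: dualising the length-one f.g.\ projective resolution, using $T^*=0$ to obtain the length-one projective resolution $0\to P_0^*\to P_1^*\to \Ext^1_A(T,A)\to 0$, and then using $P_i\cong P_i^{**}$ and $d^{**}=d$ to read off both $\Ext^1_A(T,A)^*=0$ and $\coker(d^{**})\cong T$, with naturality from the comparison theorem for resolutions. Your derivations of $T^*=0$ and $\Hom_A(T,S^{-1}A)=0$ in part (ii) are also fine (every element of an $S$--torsion module is killed by some $s\in S$, and axiom (ii), respectively invertibility of $s$ in $S^{-1}A$, does the rest), as is the long-exact-sequence skeleton.

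The one genuine gap is your justification of the key vanishing $\Ext^1_A(T,S^{-1}A)=0$ in part (ii). You invoke ``the resolution of $\Ext^1_A(T,A)$ in part (i)'', but part (ii) assumes only that $T$ is f.g.\ and $S$--torsion---there is no homological dimension $1$ hypothesis---so that resolution is not available; moreover the base-change identity $\Ext^1_A(T,S^{-1}A)\cong S^{-1}\Ext^1_A(T,A)$ itself requires a resolution of $T$ by f.g.\ projectives in degrees $0$ and $1$ (finite presentation), which is not automatic for a f.g.\ module over a general ring with involution. (Even granting the identification, ``since $S^{-1}T=0$'' does not by itself give $S^{-1}\Ext^1_A(T,A)=0$: one must localise the resolution of $T$, note that $S^{-1}d$ is an isomorphism, and dualise over $S^{-1}A$.) The clean repair avoids base change altogether: the two-sided Ore condition makes $S^{-1}A$ flat as a right $A$--module and $S^{-1}A\otimes_A-$ carries projectives to projectives, so for any projective resolution $P_\bullet\to T$ and any left $S^{-1}A$--module $N$ the adjunction $\Hom_{S^{-1}A}(S^{-1}A\otimes_AP_i,N)\cong\Hom_A(P_i,N)$ yields $\Ext^n_A(T,N)\cong\Ext^n_{S^{-1}A}(S^{-1}T,N)$ with no finiteness hypotheses; taking $N=S^{-1}A$ and using $S^{-1}T=0$ kills $\Ext^1_A(T,S^{-1}A)$, and your connecting homomorphism then supplies the stated natural isomorphism. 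If you are content to prove (ii) only for modules of homological dimension $1$, i.e.\ for objects of $\H(A,S)$---the only case the paper actually uses---then your original argument does go through.
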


Lemma \ref{lem:ext1} justifies the following definitions. The \emph{torsion dual} of a module $T$ in $\H(A,S)$ is the module\[T^\wedge:=\Hom_A(T,S^{-1}A/A)\]in $\H(A,S)$ with the action of $A$ given by $(a,f)\mapsto (x\mapsto f(x)\overline{a})$. There is a natural isomorphism\[T\xrightarrow{\cong} T^{\wedge\wedge};\qquad x\mapsto (f\mapsto \overline{f(x)}),\]and for $R,T$ in $\H(A,S)$, $f\in\Hom_A(R,T^\wedge)$ there is a \emph{torsion dual morphism}\[f^\wedge\co T\to R^\wedge;\qquad x\mapsto(y\mapsto\overline{f(y)(x)}).\]

\subsection*{Chain complex conventions}

Given chain complexes $(C,d_C),(D,d_D)$ of $A$--modules a \emph{chain map of degree $n$} is a collection of morphisms $f_r\co C_r\to D_{r+n}$ with $d_Df_r=(-1)^nf_{r-1}d_C$. The category of chain complexes of $A$--modules with morphisms degree 0 chain maps is denoted $\Ch(A)$. A chain complex $C$ in $\Ch(A)$ is \emph{finite} if it is concentrated in finitely many dimensions, and \emph{positive} if $H_r(C)=0$ for $r<0$. The category of finite, positive chain complexes of objects of $\A(A)$ is denoted $\B_+(A)$. If $C$ is in $\Ch(A)$, let $C^t$ denote the chain complex of f.g.\ projective, right $A$--modules $(C^t)_r:=(C_r)^t$. The \emph{dual chain complex} of $C$ in $\Ch(A)$ is $C^{-*}$ in $\Ch(A)$ with modules $(C^{-*})_r:=(C_{-r})^*=\co C^{-r}$ and differential $(-1)^rd^*_C\co C^{-r}\to C^{-r+1}$. The \emph{suspension} of $C$ in $\Ch(A)$ is the chain complex $\Sigma C$ in $\Ch(A)$ with modules $(\Sigma C)_r=C_{r-1}$ and differential $d_{\Sigma C}=d_C$. The \emph{desuspension} $\Sigma^{-1}C$ is defined by $\Sigma(\Sigma^{-1}C)=C$. Morphisms $f,f'\co C\to D$ are \emph{chain homotopy equivalent} if there exists a collection of $A$--module morphisms $h=\{h_r\co C_r\to D_{r+1}\,|\,r\in \Z\}$ so that $f-f'=d_Dh+hd_C$, in which case the collection is called a \emph{chain homotopy} and we write $h\co f\simeq f'$. A morphism $f\co C\to D$ is a \emph{chain homotopy equivalence} if there exists a morphism $g\co D\to C$ such that $fg\simeq 1_D$ and $gf\simeq 1_C$. The homotopy category of $\B_+(A)$ is denoted $h\B_+(A)$.

For $C,D$ in $\Ch(A)$, there are chain complexes of $\Z$--modules \begin{gather*}(C^t\otimes_A D)_r:=\bigoplus_{p+q=r}C^t_p\otimes_A D_q;\quad d(x\otimes y)=x\otimes d_D(y)+(-1)^qd_C(x)\otimes y,\\(\Hom_A(C,D))_r:=\prod_{q-p=r}\Hom_A(C_p,D_q);\quad d(f)=d_D(f)-(-1)^rfd_C,\end{gather*}and the \emph{slant map} is defined as \[\setminus-\co C^t\otimes_A D\to \Hom_A(C^{-*},D);\quad x\otimes y\mapsto (f\mapsto \overline{f(x)}y).\]In the sequel we will often write $C\otimes D$ in place of $C^t\otimes_AD$ in order to ease notation. If $C,D$ are (chain homotopy equivalent to) objects of $\B_+(A)$ then the slant map is a chain (homotopy) equivalence. When $C,D$ are chain homotopy equivalent to objects of $\B_+(A)$, there is an isomorphism of groups\[\{\text{$n$--cycles in $\Hom_A(C,D)$}\}\cong\{\text{chain maps of degree $n$ from $C$ to $D$}\}.\]Combining the above, when $C, D$ are chain homotopy equivalent to objects of $\B_+(A)$ and $\psi\in (C\otimes D)_n$, we will write the associated morphisms\[\psi_0\co C^{n-r}\to D_r,\qquad r\in\Z.\]When $\psi$ is moreover a cycle, $\psi_0$ describes a chain map.

A morphism $f\co C\to D$ in $\Ch(A)$ is a \emph{cofibration} if it is degreewise split injective and a \emph{fibration} if it degreewise split surjective. A sequence of morphisms in $\Ch(A)$ is a \emph{(co)fibration sequence} if each morphism in the sequence is a (co)fibration. The \emph{algebraic mapping cone} of $f$ is the chain complex $C(f)$ in $\Ch(A)$ with $C(f)_r=D_r\oplus C_{r-1}$ and \[d_{C(f)}=\left(\begin{matrix} d_D&(-1)^{r-1}f\\0&d_C\end{matrix}\right)\co D_r\oplus C_{r-1}\to D_{r-1}\oplus C_{r-2}.\]There is an obvious inclusion morphism $e\co D\to C(f)$ and the composite $ef\co C\to C(f)$ is easily seen to be nullhomotopic (see \cite[\textsection 11]{tibor} for more details of mapping cones). A \emph{homotopy cofibration sequence} is a sequence of morphisms in $\Ch(A)$ such that any two successive morphisms \[\xymatrix{C\ar[r]^-{f}&D\ar[r]^g&E}\] have nullhomotopic composition and such that any choice of nullhomotopy $j\co gf\simeq 0$ induces a chain equivalence $\Phi_j\co C(f)\simeq E$. A sequence of morphisms in $\Ch(A)$ is a \emph{homotopy fibration sequence} if the dual sequence of morphisms is a homotopy cofibration sequence. Using the obvious projection morphisms $\text{proj}\co C(f)\to \Sigma C$, every morphism $f\co C\to D$ in $\Ch(A)$ has an associated \emph{Puppe sequence}\[\dots\to\Sigma^{-1}D\to \Sigma^{-1}C(f)\xrightarrow{\Sigma^{-1}\text{proj}} C\xrightarrow{f} D\xrightarrow{e} C(f)\xrightarrow{\text{proj}} \Sigma C\xrightarrow{\Sigma f} \Sigma D\to \dots\]which is both a homotopy fibration sequence and a homotopy cofibration sequence. In particular this shows that in $\Ch(A)$, homotopy fibration sequences agree with homotopy cofibration sequences. Given diagrams\[\xymatrix{D&C\ar[l]_-{f}\ar[r]^-{f'}& D'}\quad\text{ and }\quad\xymatrix{D\ar[r]^-{g}&E&D'\ar[l]_-{g'}}\]the \emph{homotopy pushout} and \emph{homotopy pullback} are given respectively by \[D\cup_C D':=C\left(\lmat -f\\f'\rmat \co C\to D\oplus D'\right)\]\[D\times_E D':= \Sigma^{-1}C((g\,\,-g')\co D\oplus D'\to E).\]A \emph{homotopy commuting square} $\Gamma$ in $\Ch(A)$ is a diagram \[\xymatrix{C\ar@{~>}[dr]^-{h}\ar[r]^-{f'}\ar[d]_-{f}&D'\ar[d]^-{g'}\\D\ar[r]^{g}&E}\]consisting of a square of morphisms $f,f',g,g'$ in $\Ch(A)$ together with a homotopy $h\co g'f'\simeq gf$. A homotopy commuting square induces the obvious maps of cones\[C(g',f)\co C(f')\to C(g),\qquad C(g,f')\co C(f)\to C(g').\]Taking cones again, there is not just homotopy equivalence, but actual equality $C(C(g',f))=C(C(g,f'))$. We define the \emph{iterated cone on $\Gamma$} to be that chain complex \[C(\Gamma)=C(C(g',f))=C(C(g,f')).\]Note that, strictly speaking, the morphisms $C(g',f)$ and $C(g,f')$ in $\Ch(A)$, and hence the complex $C(\Gamma)$, depend on the choice of $h$ but this is suppressed from the notation. A \emph{homotopy pushout square} is a homotopy commuting square $\Gamma$ such that the induced map $\Phi_h\co D\cup_C D'\to E$ is a homotopy equivalence. A \emph{homotopy pullback square} is defined analogously using the homotopy pullback.

\subsection{Structured chain complexes and algebraic cobordism}\label{subsec:Ltheory}

In this section we will recall for the reader's convenience some algebraic definitions and constructions from Ranicki's Algebraic Theory of Surgery \cite{MR560997,MR566491, MR620795}, a theory whose development was originally motivated by the challenge to provide a `chain complex cobordism' reformulation for the quadratic surgery obstruction groups of Wall \cite{MR0431216}. This is very far from a complete account and the reader will sometimes be given (detailed) references to the literature for the basics of the theory. In this paper will be primarily working with a version of the algebraic machinery called \emph{symmetric $L$--theory} and there is enough in the literature about this for us to rely on references fairly frequently.

In order to prove results about Seifert forms later, we will also need to work with a version called \emph{ultraquadratic $L$--theory} (originally defined in Ranicki \cite[p.\ 814]{MR620795}). There is very little written about this type of $L$--theory so we will give more careful proofs and constructions here. Ultraquadratic $L$--theory is algebraically simpler than the general symmetric $L$--theory, but is less robust as an algebraic tool. Notably, in Proposition \ref{prop:surgery}, we will prove the existence of obstructions to \emph{algebraic surgery} in this setting (where in the symmetric case algebraic surgery is always unobstructed). In the ultraquadratic theory the objects are pairs $(C,\psi)$, given by a chain complex $C$ in $h\B_+(A)$ equipped with an $n$--cycle $\psi\in (C\otimes C)_n$, or equivalently a chain map $\psi_0\co C^{n-*}\to C_*$. The objects $(C,\psi)$ of ultraquadratic $L$--theory arise from the geometric situation of a degree 1 map of closed oriented topological manifolds \[f\co M\to X\times S^1,\]that is covered by a stable map of topological normal bundles and such that $f$ is a $\Z[\pi_1(X)]$--homology equivalence (see Ranicki \cite[p.\ 818]{MR620795} for precise details on the construction of $(C,\psi)$ from this geometric setup). In particular, when $X$ is a disc $D^{n+1}$, the exterior $M^{n+2}$ of an $n$--knot $S^n\hookrightarrow S^{n+2}$ possesses such a map (rel.\ boundary) as we explain in Section \ref{sec:knots}.

\begin{definition}A \emph{half-unit} $s\in A$ is a central element such that $s+\overline{s}=1\in A$.
\end{definition}

When $A$ contains a half-unit (for instance, when 2 is invertible in $A$), the structured chain complexes of general symmetric $L$--theory always simplify to those of ultraquadratic $L$--theory. This appears to be well-known to experts but not written down, so we will make the proofs of this clear. However, it is not the case that the resultant $L$--theories are the same as a symmetric algebraic cobordism does not necessarily improve to an ultraquadratic one.

\subsection*{Chain complexes with symmetric structure}

From now on, take $\eps\in A$ to be a central unit such that $\eps\overline{\eps}=1$ ($\eps=\pm1$ will be a common choice). The cyclic group of order 2 is denoted $\Z/2\Z=\{1,T\}$. Let $C$ be in $\Ch(A)$ and define the standard $\eps$--involution\begin{eqnarray*}T=T_\eps\co C_p^t\otimes_A C_q&\to& C_q^t\otimes_A C_p\\x\otimes y&\to& \eps(-1)^{pq}y\otimes x\end{eqnarray*}so that $C^t\otimes_A C$ in $\Ch(\Z)$ may be regarded as a chain complex of $\Z[\Z/2\Z]$ modules. The standard free $\Z[\Z/2\Z]$--module resolution $W$ of $\Z$ is the chain complex $W_i=\Z[\Z/2\Z]$, $i\geq 0$\[W:\qquad\dots\to W_3\xrightarrow{1-T} W_2 \xrightarrow{1+T} W_1 \xrightarrow{1-T} W_0 \to 0.\]The `homotopy fixed points' of the involution $T_\eps$ on $C^t\otimes_A C$ are in the form of the complex of $\Z$--modules\[W^\%C=W_\eps^\%C:=\Hom_{\Z[\Z/2\Z]}(W,C^t\otimes_A C).\] Given a morphism $f\co C\to D$, the $W^\%$ construction induces a morphism of abelian groups\[f^\%\co W^\%C\to W^\% D\]so that $W^\%$ is a functor $\Ch(A)\to \Ch(\Z)$. It is possible to show (see Ranicki \cite[p.\ 101]{MR560997}) given a homotopy $h: f_1 \simeq f_2\co C\to D$ there exists a (non-canonical) choice of homotopy $h^\%: f_0^\%\simeq f_1^\%\co W^\% C\to W^\% D$. Unless clarification is needed, we suppress the `$\eps$' from the notation of $T$ and $W^\%$.

\begin{definition}For $n\geq 0$, an \emph{$n$--dimensional $\eps$--symmetric structure on $C$ in $h\B_+(A)$} is an $n$--dimensional cycle $\phi\in W^\%C_n$. The pair $(C,\phi)$ is called an \emph{$n$--dimensional $\eps$--symmetric complex}. See \cite[\textsection 20.4]{MR1713074} for definitions of morphisms and homotopy equivalences of symmetric complexes.


For $n\geq 0$, an \emph{$(n+1)$--dimensional $\eps$--symmetric structure on a morphism $f\co C\to D$ in $h\B_+(A)$} is an $(n+1)$--dimensional cycle $(\delta\phi,\phi)\in C(f^\%)_{n+1}\cong W^\%D_{n+1}\oplus W^\%C_{n}$ (this notation indicates $\phi$ is a cycle and $\delta\phi$ is a nullhomotopy of $f^\%(\phi)$). The pair $(f\co C\to D,(\delta\phi,\phi))$ is called an \emph{$(n+1)$--dimensional $\eps$--symmetric pair}. 

\end{definition}

\begin{remark}We have chosen to describe symmetric chain complexes in terms of cycles as in \cite{MR1713074}, rather than homology classes and $Q$--groups as in \cite{MR560997} or \cite{MR620795}. For more detailed information on the perspective we are using, see \cite[\textsection 20]{MR1713074}. 
\end{remark}

In order to define algebraic cobordism groups we will require an algebraic analogue of the glueing of manifolds along a common boundary component. Given $(n+1)$--dimensional $\eps$--symmetric pairs \[x:=((f\,\,f')\co C\oplus C'\to D,(\delta\phi,\phi\oplus \phi')),\qquad x':=((\tilde{f}'\,\,f'')\co C'\oplus C''\to D',(\delta'\phi,\phi'\oplus \phi'')),\] there is an $(n+1)$--dimensional $\eps$--symmetric pair\[x\cup x':=((g\,\,g'')\co C\oplus C''\to D\cup_{C'}D',(\delta\phi\cup_{\phi'}\delta'\phi,\phi\oplus \phi''))\] called \emph{algebraic union of $x$ and $x'$ along $(C',\phi')$}. We refer the reader to Ranicki \cite[\textsection 1.7]{MR620795} or Crowley-L\"{u}ck-Macko \cite[\textsection 11.4.2]{tibor} for details of algebraic glueing.

In order to work relative to the boundary of a knot exterior in Section \ref{sec:knots} we will need a notion of cobordism of pairs in the algebraic setting, which is given by \emph{triads}:

\begin{definition}For $n\geq 0$, an \emph{$(n+2)$--dimensional $\eps$--symmetric structure on a homotopy commuting square $\Gamma$}\[\xymatrix{C\ar@{~>}[dr]^-{h}\ar[r]^-{f'}\ar[d]_-{f}&D'\ar[d]^-{g'}\\D\ar[r]^{g}&E}\]\emph{in $h\B_+(A)$} is a quadruple $(\Phi,\delta\phi,\delta'\phi,\phi)$ such that there are defined $(n+1)$--dimensional $\eps$--symmetric pairs \[(f\co C\to D,(\delta\phi,\phi)),\quad(f'\co C\to D',(\delta'\phi,\phi))\] and an $(n+2)$--dimensional $\eps$--symmetric pair \[(g''\co D\cup_CD'\to E,(\Phi,\delta\phi\cup_\phi\delta'\phi))\]with $g''$ the map induced by universality of the pushout $D\cup_C D'$. The pair $(\Gamma, (\Phi,\delta\phi,\delta'\phi,\phi))$ is called a \emph{$(n+2)$--dimensional $\eps$--symmetric triad}.

\end{definition}

The reader is referred to Ranicki \cite[\textsection 1.3, \textsection 2.1]{MR620795} for a more complete discussion of the algebraic theory of triads.

\subsection*{Chain complexes with ultraquadratic structure}

In order to study the chain complex version of Seifert forms for a knot we will need to look at the version of the $L$--theory machinery called \emph{ultraquadratic $L$--theory}. To build this version, just replace $W$ with the truncated complex $0\to W_0\to 0$ in the construction of $W^\%$. This results similarly in a homotopy functor $\Ch(R)\to \Ch(\Z)$, now simply sending $C\mapsto C\otimes C$ and \[(f\co C\to D)\mapsto (f\otimes f\co C\otimes C\to D\otimes D).\]Recall that for $C$ in $h\B_+(R)$, the slant map is a chain homotopy equivalence $C\otimes C\simeq \Hom_R(C^{-*},C)$ and sends a cycle $\psi\in (C\otimes C)_n$ to a chain map $\psi_0\co C^{n-*}\to C$, so in this `truncated' version all structure is governed by this single chain map.

\begin{definition}For $n\geq 0$, an \emph{$n$--dimensional $\eps$--ultraquadratic structure on $C$ in $h\B_+(R)$} is an $n$--dimensional cycle $\psi\in (C\otimes C)_n$. The pair $(C, \psi)$ is called an \emph{$n$--dimensional $\eps$--ultraquadratic complex}. Two $n$--dimensional $\eps$--ultraquadratic complexes $(C,\psi)$, $(C',\psi')$ are homotopy equivalent if there exists a chain homotopy equivalence $h\co C\xrightarrow{\simeq} C'$ such that $(h\otimes h)(\psi)-\psi'$ is a boundary in $C'\otimes C'$.

The definitions of \emph{$\eps$--ultraquadratic pairs} and \emph{$\eps$--ultraquadratic triads} are made analogously to the symmetric case. (In order to define triads, we note that Ranicki's definition of \emph{algebraic glueing} is still valid in the ultraquadratic setting.)
\end{definition}

Here is how to pass from an ultraquadratic structure to a symmetric structure:

\begin{definition}\label{def:symmetrisation}The \emph{symmetrisation} is a map of $\Z[\Z/2\Z]$--module chain complexes\[1+T_\eps\co C\otimes C\to C\otimes C;\qquad \psi\mapsto(1+T_\eps)\psi=\psi+T_\eps\psi=\co \phi.\]The \emph{symmetrisation} of an $n$--dimensional $\eps$--ultraquadratic complex $(C,\psi)$ is the $n$--dimensional $\eps$--symmetric complex $(C,\phi)$ (where we have used the inclusion $W_0\hookrightarrow W$ to identify $\phi$ as a symmetric structure). We may similarly symmetrise pairs and triads.
\end{definition}

What about the passage from a symmetric structure to an ultraquadratic structure?

\begin{proposition}\label{prop:multiple}When $A$ contains a half-unit $s$, the sets of homotopy equivalence classes of the following objects are in natural 1:1 correspondence with one another:
\begin{enumerate}[(i)]
\item $n$--dimensional $\eps$--symmetric complexes $(C,\phi\in (W^\%C)_n)$ over $h\B_+(A)$,
\item pairs $(C,\phi_0\co C^{n-*}\to C)$ where $\phi_0$ is a chain map in $h\B_+(A)$ such that $\phi_0-(T_\eps\phi)_0\in\Hom_A(C^{-*},C)$ is a boundary,
\item $n$--dimensional $\eps$--ultraquadratic complexes $(C,\psi\in (C\otimes C)_n)$.
\end{enumerate}
\end{proposition}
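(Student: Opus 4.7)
The strategy is to construct natural maps forming a cycle $(i)\to(ii)\to(iii)\to(i)$ and show each induces a bijection on homotopy-equivalence classes. The half-unit $s\in A$ enters crucially in two places: in extending ``bottom'' data of type (ii) or (iii) to a full symmetric structure in (i), and in verifying that the round-trip compositions induce identities on equivalence classes.

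\textbf{Construction of the maps.} Send $(i)\to(ii)$ by $\phi=\{\phi_s\}_{s\geq 0}\mapsto \phi_0$; the cycle equation at level $s=0$ says $\phi_0$ is a cycle in $C\otimes C$, hence (via the slant map) a chain map $C^{n-*}\to C$, and the equation at level $s=1$ reads $d\phi_1=\pm(\phi_0-T_\eps\phi_0)$, which is exactly the boundary condition required by (ii). Send $(ii)\to(iii)$ by viewing $\phi_0$ as its corresponding cycle $\psi:=\phi_0\in(C\otimes C)_n$ via the slant identification, forgetting the homotopy-symmetry constraint. For $(iii)\to(i)$, use the half-unit: given an ultraquadratic cycle $\psi$, set $\phi_0:=(1+T_\eps)(s\psi)$ and $\phi_k:=0$ for $k\geq 1$. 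Since the centrality of $s$ and $T_\eps^2=1$ force $T_\eps\phi_0=\phi_0$ strictly, all higher cycle equations $d\phi_{k+1}=\pm(\phi_k\mp T_\eps\phi_k)$ are vacuous.

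\textbf{Reverse of (i)$\to$(ii) by half-unit correction.} For a direct construction of $(ii)\to(i)$, pick any $\phi_1'$ with $d\phi_1'=\phi_0-T_\eps\phi_0$ and set $\phi_1:=s\phi_1'-\bar s T_\eps\phi_1'$. Using $s+\bar s=1$ and $T_\eps^2=1$, a short calculation gives both $d\phi_1=\phi_0-T_\eps\phi_0$ (so $\phi_1$ is still a valid witness) and $\phi_1+T_\eps\phi_1=0$. The latter allows the next cycle equation to be satisfied by $\phi_2=0$, and inductively $\phi_k=0$ for $k\geq 2$, producing a symmetric cycle whose zeroth component agrees with the input.

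\textbf{Round-trip computations.} For the round trip $(i)\to(ii)\to(iii)\to(i)$, the output symmetric cycle has zeroth component $(1+T_\eps)(s\phi_0)$; substituting $T_\eps\phi_0=\phi_0-d\phi_1$ from the cycle equation yields
\[(1+T_\eps)(s\phi_0)-\phi_0=s\phi_0+\bar s(\phi_0-d\phi_1)-\phi_0=-d(\bar s\phi_1),\]
so the two zeroth components differ by an explicit boundary. A parallel bookkeeping argument, again using the half-unit to absorb higher-level discrepancies, gives a chain homotopy of full symmetric structures. The composite $(iii)\to(i)\to(iii)$ returns the ultraquadratic cycle $(1+T_\eps)(s\psi)$ and one verifies via a similar half-unit computation that this represents the same class as $\psi$.

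\textbf{Main obstacle.} The most delicate technical point is showing the round trips realize the identity on the full equivalence relations: a chain equivalence $h\co C\xrightarrow{\simeq}C'$ witnessing equivalence at level (iii) must be lifted to a chain equivalence of symmetric complexes compatibly with the $h^\%$-construction, and this relies on the standard (non-canonical) lift of homotopies of $h$ to homotopies of $h^\%$ together with the explicit boundaries produced above. The half-unit is used to make these lifts canonical, avoiding the higher $Q$-group obstructions that would appear over a general ring.
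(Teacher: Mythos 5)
Your treatment of the passage between (i) and (ii) is essentially sound, and is in fact more hands-on than the paper's: where the paper invokes the fact that with a half-unit the quadratic symmetrisation $1+T_\eps\co W_\%C\to W^\%C$ is a chain equivalence and then argues through the projection $W^\%C\to C\otimes C$, you build the higher components explicitly. Your correction $\phi_1:=s\phi_1'-\bar s T_\eps\phi_1'$ does satisfy $d\phi_1=\phi_0-T_\eps\phi_0$ and $T_\eps\phi_1=-\phi_1$, so the higher terms may be taken to vanish, and your identity $(1+T_\eps)(s\phi_0)-\phi_0=-d(\bar s\phi_1)$ is correct; the promised ``bookkeeping'' lift to a boundary of full symmetric structures can indeed be written down ($\chi_0=\bar s\phi_1$, $\chi_1=\pm s\phi_2$, and so on), so that leg is fine.

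The genuine gap is the round trip through (iii). Your composite $(iii)\to(i)\to(iii)$ sends $\psi$ to $(1+T_\eps)(s\psi)$, and $(1+T_\eps)(s\psi)-\psi=\bar s(T_\eps\psi-\psi)$: for a general ultraquadratic cycle nothing forces $T_\eps\psi-\psi$ to be a boundary, so this difference is not a boundary, and no underlying self-equivalence $h$ can absorb it either, since $(h\otimes h)\psi-T_\eps(h\otimes h)\psi=(h\otimes h)(\psi-T_\eps\psi)$ while $(1+T_\eps)(s\psi)$ is strictly $\eps$--symmetric. Already for $n=0$ (Seifert forms, cf.\ Proposition \ref{prop:correspondenceproj}) this fails: over $\Z[\tfrac{1}{2}]$ with $s=\tfrac{1}{2}$ and $\eps=1$, $\psi=\lmat 0&1\\0&0\rmat$ is carried to $\lmat 0&1/2\\1/2&0\rmat$, which is not congruent to $\psi$. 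So ``one verifies via a similar half-unit computation'' is exactly the step that does not exist: what your cycle of maps needs is injectivity of symmetrisation on ultraquadratic homotopy classes (that $(1+T_\eps)(\psi-\psi')\simeq 0$ forces $\psi\simeq\psi'$), and your construction supplies no argument for this. Note also that your map $(ii)\to(iii)$ differs from the paper's: the paper passes to the ultraquadratic structure $s\phi$ rather than $\phi_0$ itself, never runs your round trip, and instead addresses precisely this injectivity question by a separate argument (a commuting square relating $1+T_\eps\co W_\%C\to W^\%C$ to the projection $W^\%C\to C\otimes C$). Your explicit difference term $\bar s(T_\eps\psi-\psi)$ shows this is the delicate heart of the statement --- it is the obstruction measuring how much more data an ultraquadratic structure carries than its symmetrisation --- and it cannot be dismissed by half-unit bookkeeping; any complete argument must engage with that point directly.
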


Proposition \ref{prop:multiple} seems to be well-known to experts, but there does not appear to be a proof in the literature.

\begin{proof}When there is a half-unit, the symmetrisation map in \emph{quadratic $L$--theory} (defined in Ranicki \cite{MR560997}) \[1+T_\eps\co W_\%C\to W^\%C\]is a chain homotopy equivalence by the proof of \cite[Proposition 3.3]{MR560997}. Moreover, if an element $\phi\in (W^\%C)_n$ is in the image of the symmetrisation then (by definition) it is entirely described by an element $\phi\in (C\otimes C)_n$. But this element is also in the image of the natural projection $W^\%C\to C\otimes C$. So the projection is a chain homotopy equivalence and the equivalence of (i) and (ii) is proved.

Now, given a pair $(C,\phi_0)$ as in (ii), there is a corresponding pair $(C,\phi\in (C\otimes C)_n)$ and we may define an $n$--dimensional $\eps$--ultraquadratic complex $(C,s\phi)$, which has symmetrisation $(C,\phi)$. Given $n$--dimensional $\eps$--ultraquadratic structures $\psi, \psi'\in (C\otimes C)_n$ with $(1+T_\eps)(\psi-\psi')\simeq 0$, the following commuting square shows that moreover $\psi\simeq \psi'$:\[\xymatrix{W_\%C\ar[r]^-{1+T_\eps}_-{\simeq}&W^\%C\ar[d]_-{\simeq}^-{\text{project}}\\
C\otimes C\ar[u]^-{\text{inclusion}}\ar[r]^-{1+T_\eps}&C\otimes C}\]
\end{proof}

\begin{remark}
Proposition \ref{prop:multiple} does \emph{not} hold analogously for pairs or triads. As an example of this, and using the language of Section \ref{sec:DWgroups}, note that $\eps$--symmetric Seifert forms determine 0--dimensional $\eps$--ultraquadratic complexes and $\eps$--symmetric forms equipped with a lagrangian determine 1--dimensional $\eps$--symmetric pairs (cf. proof of Proposition \ref{prop:metcxseif}, below). But consider that, for example, the symmetrisation of a rational Seifert form for any knot $S^1\hookrightarrow S^3$ is the standard hyperbolic matrix, but not every Seifert form for such a knot admits a metaboliser (there are knots which are not `algebraically slice'). So we see the corresponding symmetric pair has no corresponding ultraquadratic pair.\end{remark}

\subsection*{Algebraic Thom construction}

We now briefly describe Ranicki's algebraic Thom construction \cite[p.\ 46]{MR620795}, which will be required in Section \ref{sec:knots} to change perspective between complexes/pairs and pairs/triads.

Given an $(n+1)$--dimensional $\eps$--symmetric pair $(f\co C\to D,(\delta\phi,\phi))$, recall that a choice of nullhomotopy \[\xymatrix{C\ar@/_1pc/[rr]_{j\co ef\simeq 0}\ar[r]^f&D\ar[r]^e&C(f)}\] induces a morphism $\Phi_{j^\%}\co C(f^\%)\to W^\%C(f)$. Define the $(n+1)$--dimensional cycle $\delta\phi/\phi:=\Phi_{j^\%}(\delta\phi,\phi)\in W^\%C(f)_{n+1}$. The \emph{algebraic Thom construction} for $(f\co C\to D, (\delta\phi,\phi))$ is the $(n+1)$--dimensional $\eps$--symmetric complex $(C(f),\delta\phi/\phi)$.

Given an $(n+1)$--dimensional $\eps$--ultraquadratic pair $(f\co C\to D,(\delta\psi,\psi))$ there is a choice of morphism $\Phi_j\co C(f\otimes f)\to C(f)\otimes C(f)$. Define the $(n+1)$--dimensional cycle $\delta\psi/\psi:=\Phi_j(\delta\psi,\psi)\in C(f)\otimes C(f)$. The \emph{algebraic Thom construction} for $(f\co C\to D,(\delta\psi,\psi))$ is the $(n+1)$--dimensional $\eps$--ultraquadratic complex $(C(f),\delta\psi/\psi)$.

There is a relative version of the algebraic Thom complex. Given an $(n+2)$--dimensional $\eps$--symmetric triad $(\Gamma,(\Phi,\delta\phi,\delta'\phi,\phi))$ consider the induced maps of cones\[\begin{array}{rrcl}\nu=C(g,f')\co &C(f)&\to&C(g')\\\nu'=C(g',f)\co &C(f')&\to &C(g)\end{array}\]By the universal property of the algebraic mapping cone, we obtain morphisms\[\xymatrix{C(\nu^\%)&C(C(g^\%,(f')^\%))\simeq C(C((g')^\%,f^\%))\ar[r]\ar[l]&C((\nu')^\%)}.\]Using the two images of the $(n+2)$--cycle $(\Phi,\delta\phi,\delta'\phi,\phi)$ under these respective morphisms, a triad defines two $(n+2)$--dimensional $\eps$--symmetric pairs \[\begin{array}{rcl}x&=&(\nu\co C(f)\to C(g'),(\Phi/\delta\phi,\delta\phi'/\phi))\\x'&=&(\nu'\co C(f')\to C(g),(\Phi/\delta\phi,\delta\phi'/\phi))\end{array}\]The \emph{relative algebraic Thom construction} for $(\Gamma,(\Phi,\delta'\phi,\delta\phi,\phi))$ is defined to be the set $\{x,x'\}$.

\subsection*{Poincar\'{e} complexes and $L$--groups}

There are chain complex analogues of Poincar\'{e} duality and Poincar\'{e}-Lefschetz duality for $\eps$--symmetric and $\eps$--ultraquadratic structures. We now recall these and hence Ranicki's definition of algebraic cobordism and the algebraic $L$--groups.

The inclusion of chain complexes $W_0\hookrightarrow W$ induces a natural transformation of functors between $\Hom_{\Z[\Z/2\Z]}(W,-)$ and $\Hom_{\Z[\Z/2\Z]}(W_0,-)$. For a given complex $C$ in $\Ch(R)$ this induces \emph{evaluation} morphisms \[\ev\co W^\%C\to C\otimes C;\quad \phi\mapsto \ev(\phi)\]and we write the image of the evaluation $\ev(\phi)\in (C\otimes C)_n$ under the slant map\[\setminus-\co C\otimes C\to \Hom_A(C^{-*},C);\qquad \phi_0:=\sm(\ev(\phi))\co C^{n-*}\to C.\]If $f\co C\to D$ is a morphism in $\Ch(A)$ then there is a relative evaluation morphism\[\ev\co C(f^\%)\simeq\Hom_{\Z[\Z/2\Z]}(W,C(f\otimes f))\to C(f\otimes f);\quad (\delta\phi,\phi)\mapsto \ev(\delta\phi,\phi)\]and the image of the evaluation $\ev(\delta\phi,\phi)\in C(f\otimes f)_{n+1}=(D\otimes D)_{n+1}\oplus (C\otimes C)_n$ under the slant map is written\[\left(\begin{matrix}\delta\phi_0 &0\\0&\phi_0\end{matrix}\right):=\setminus(\ev(\delta\phi,\phi)),\quad \delta\phi_0\co D^{n+1-*}\to D,\quad \phi_0\co C^{n-*}\to C.\]From this there are derived the maps that will play the part of Poincar\'{e}--Lefschetz duality on the chain level\[\begin{array}{rccl}
(\delta\phi_0\,\,f\phi_0)\co &C(f)^{n+1-*}&\to& D,\\
\lmat \delta\phi_0\\\phi_0 f^* \rmat\co &D^{n+1-*}&\to& C(f).\end{array}\]We refer the reader Crowley-L\"{u}ck-Macko \cite[\textsection 11.4.1]{tibor} for full derivation of this.

\begin{definition}An $n$--dimensional $\eps$--symmetric complex $(C,\phi)$ is \emph{Poincar\'{e}} if \[\phi_0\co C^{n-*}\xrightarrow{\simeq} C\] is a chain homotopy equivalence. An $(n+1)$--dimensional $\eps$--symmetric pair $(f\co C\to D,(\delta\phi,\phi))$ is \emph{Poincar\'{e}} if\[(\delta\phi_0\,\,\,f\phi_0)\co C(f)^{n+1-r}\xrightarrow{\simeq} D_r,\qquad r\in\Z\]is a chain homotopy equivalence. Equivalently, the pair is Poincar\'{e} if there is a chain homotopy equivalence \[\left(\begin{matrix}\delta\phi_0\\ (-1)^{n+1-r}\phi_0 f^*\end{matrix}\right)\co D^{n+1-r}\xrightarrow{\simeq} C(f)_r,\qquad r\in\Z.\]An $(n+2)$--dimensional $\eps$--symmetric triad $(\Gamma, (\Phi,\delta\phi,\delta'\phi,\phi))$ is \emph{Poincar\'{e}} if each of the associated pairs is Poincar\'{e}:\[(f\co C\to D,(\delta\phi,\phi)),\quad(f'\co C\to D',(\delta'\phi,\phi)),\quad(g''\co D\cup_CD'\to E,(\Phi,\delta\phi\cup_\phi\delta'\phi)).\]

An $\eps$--ultraquadratic complex/pair/triad is \emph{Poincar\'{e}} if the symmetrisation (Definition \ref{def:symmetrisation}) is a Poincar\'{e} $\eps$--symmetric complex/pair/triad.
\end{definition}

\begin{definition}Two $n$--dimensional $\eps$--symmetric complexes $(C,\phi)$ and $(C',\phi')$ are \emph{cobordant} if there exists an $(n+1)$--dimensional $\eps$--symmetric Poincar\'{e} pair \[(f\co C\oplus C'\to D,(\delta\phi,\phi\oplus -\phi')).\]

Cobordism is an equivalence relation on the set of $\eps$--symmetric Poincar\'{e} complexes such that homotopy equivalent complexes are cobordant (Lemma \ref{lem:welldef}, below). Moreover, the resultant set of cobordism classes forms a group called the \emph{$n$--dimensional $\eps$--symmetric $L$--group of $A$} (see Ranicki \cite[\textsection 3.2]{MR560997} for the checks that this is a group):\[L^n(A,\eps):=\left\{\begin{array}{c}\text{cobordism classes of $n$--dimensional}\\ \text{$\eps$--symmetric Poincar\'{e} complexes}\end{array}\right\},\]with addition and inverses given by:\[(C,\phi)+(C',\phi')=(C\oplus C',\phi\oplus \phi'),\qquad -(C,\phi)=(C,-\phi)\in L^n(A,\eps).\]

\end{definition}

After replacing the word `symmetric' with the word `ultraquadratic', the previous definition transfers verbatim to give:

\begin{definition}The \emph{$n$--dimensional $\eps$--ultraquadratic $L$--group of $A$} is\[\widehat{L}_n(A,\eps):=\left\{\begin{array}{c}\text{cobordism classes of $n$--dimensional}\\ \text{$\eps$--ultraquadratic Poincar\'{e} complexes}\end{array}\right\}.\]
\end{definition}

For the knot theory in Section \ref{sec:knots}, we will be interested in the $L$--theory and double $L$--theory in the category $\H(A,S)$ of f.g.\ $S$--torsion $A$--modules admitting a projective resolution of length 1. For this reason we introduce the category $\C_+(A,S)\subset \B_+(A)$ of chain complexes $C$ that are \emph{$S$--acyclic}. In other words $S^{-1}H_r(C)=0$ for all $r\in\Z$.

Working now with the subcategory $h\C_+(A,S)\subset h\B_+(A)$, we obtain the following restricted notion of algebraic cobordism in the symmetric setting.

\begin{definition}Two $n$--dimensional $S$--acyclic $\eps$--symmetric complexes $(C,\phi)$ and $(C',\phi')$ are \emph{$(A,S)$--cobordant} if there exists an $(n+1)$--dimensional $\eps$--symmetric Poincar\'{e} pair \[(f\co C\oplus C'\to D,(\delta\phi,\phi\oplus -\phi'))\]such that $f$ is a morphism in $h\C_+(A,S)$.

The set of (A,S)-cobordism classes in $h\C_+(A,S)$ forms a group, called the \emph{$n$--dimensional $\eps$--symmetric $L$--group of $(A,S)$}:\[L^n(A,S,\eps):=\left\{\begin{array}{c}\text{$(A,S)$--cobordism classes of $(n+1)$--dimensional}\\ \text{$S$--acyclic $(-\eps)$--symmetric Poincar\'{e} complexes}\end{array}\right\}.\]
\end{definition}

\begin{remark}The choice of the convention `$n+1$' and `$-\eps$' in the definition of $L^n(A,S,\eps)$ follows Ranicki \cite[\textsection 3.2.2]{MR620795}, where this unusual looking choice is explained.
\end{remark}

\subsection{Chain complex double-cobordism and $DL$--groups}\label{subsec:doub}

In this subsection, we develop our theory of chain complex double-cobordism, which results in a refinement of the classical torsion $L$--groups and ultraquadratic $L$--groups described in the previous subsection. We will define 3 types of double $L$--group, which each refine the respective types described in Subsection \ref{subsec:Ltheory}. First, we will define the $\eps$--ultraquadratic double $L$--group of $R$\[\widehat{DL}_n(R,\eps),\]which is well-defined for any coefficient ring with involution. 

\begin{notation}For the rest of the paper, `$A$' will indicate the assumption of a half-unit $s\in A$.\end{notation}

Working over $A$, we will define the projective and torsion $\eps$--symmetric double $L$--groups, \[DL^n(A,\eps)\qquad\text{and}\qquad DL^n(A,S,\eps),\]which are only well-defined when there is a half-unit in the coefficient ring.

\subsection*{Ultraquadratic $DL$--groups}

For the ultraquadratic case we work with the coefficients $R$.

\begin{definition}\label{def:comp}For $n\geq 0$, two cobordisms between $n$--dimensional $\eps$--ultraquadratic Poincar\'{e} complexes $(C,\psi)$ and $(C',\psi')$ \[x_\pm:=(f_\pm\co C\oplus C'\to D_\pm,(\delta_\pm\psi,\psi\oplus-\psi')\in C(f_\pm\otimes f_\pm)_{n+1})\](labelled `$+$' and `$-$') are \emph{complementary} if the chain map\[\left(\begin{smallmatrix}f_+\\f_-\end{smallmatrix}\right)\co C\oplus C'\to D_+\oplus D_-\]is a homotopy equivalence. In which case we say $(C,\psi)$ and $(C',\psi')$ are \emph{double-cobordant} and that the set $\{x_+,x_-\}$ is a \emph{double-cobordism} between them.
\end{definition}

\begin{lemma}\label{lem:welldef}For $n\geq0$, if $(C,\psi)$, $(C',\psi')$ are homotopy equivalent $n$--dimensional, $\eps$--ultraquadratic Poincar\'{e} complexes over $R$, then they are double-cobordant.
\end{lemma}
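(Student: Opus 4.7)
The plan is to exhibit two Poincar\'{e} $\eps$--ultraquadratic cobordisms between $(C,\psi)$ and $(C',\psi')$ satisfying the complementarity condition on the combined boundary map. From the hypothesis, I fix a chain homotopy equivalence $h\co C\to C'$ and a chain $\chi\in(C'\otimes C')_{n+1}$ with $\partial\chi=(h\otimes h)(\psi)-\psi'$; I also fix a chosen homotopy inverse $h'\co C'\to C$ together with chain homotopies $H\co h'h\simeq 1_C$ and $H'\co hh'\simeq 1_{C'}$.

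The first cobordism $x_+$ is the standard \emph{mapping cylinder cobordism}: take $D_+:=C'$ and $f_+:=(h\,\,1)\co C\oplus C'\to C'$, with the $(n{+}1)$--cycle $(\chi,\psi\oplus-\psi')\in C(f_+\otimes f_+)_{n+1}$ as ultraquadratic structure. The cycle condition is immediate from the defining relation for $\chi$. The Poincar\'{e} property follows from the observation that $f_+$ is split surjective (section $y\mapsto(0,y)$) with kernel chain homotopy equivalent to $C$ via $x\mapsto(x,-h(x))$, so that $C(f_+)\simeq\Sigma C$ and therefore $C(f_+)^{n+1-*}\simeq(\Sigma C)^{n+1-*}\simeq C^{n-*}\simeq C\simeq C'=D_+$, the last equivalences using Poincar\'{e} duality for $(C,\psi)$ and the map $h$.

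For the complementary cobordism $x_-$, the natural candidate is the symmetric construction using $h'$: take $D_-:=C$ and $f_-:=(1\,\,h')\co C\oplus C'\to C$, with structure $(\chi',\psi\oplus-\psi')$ where $\chi'\in(C\otimes C)_{n+1}$ satisfies $\partial\chi'=\psi-(h'\otimes h')(\psi')$. Such a $\chi'$ is built by applying $(h'\otimes h')$ to the defining relation for $\chi$, giving $(h'h\otimes h'h)(\psi)-(h'\otimes h')(\psi')=\partial(h'\otimes h')(\chi)$, and then using the chain homotopy $H$ to express $(h'h\otimes h'h)(\psi)-\psi$ as a boundary in $C\otimes C$. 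An analogous cone calculation to that for $x_+$ verifies the Poincar\'{e} condition.

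The main obstacle is verifying complementarity: the combined map $M=\binom{f_+}{f_-}\co C\oplus C'\to C'\oplus C$ must be a chain homotopy equivalence. On homology, $M_*$ reads as $\lmat h_*& 1\\ 1& h_*^{-1}\rmat$, which has vanishing formal determinant $h_*h_*^{-1}-1=0$; so the pair $(f_+,f_-)$ as written does \emph{not} give a chain equivalence. Resolving this is the key technical step: one modifies $f_-$ (and correspondingly $\chi'$) using the chain homotopies $H,H'$---equivalently, one replaces $D_-$ by a chain homotopy equivalent enlargement, such as the algebraic mapping cylinder of $h$ (or of $h'$), equipped with the induced ultraquadratic structure---so that the combined map becomes chain homotopic to an upper--triangular map with $h$ and $h'$ on the diagonal, and hence visibly a chain homotopy equivalence. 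Equivalently, one exhibits a direct chain contraction of the mapping cone $C(M)$ assembled from the data $(h,h',H,H',\chi,\chi')$. Once this refinement is carried out and the Poincar\'{e} condition is checked for the enlarged $x_-$, the pair $\{x_+,x_-\}$ is the desired double-cobordism and the lemma follows.
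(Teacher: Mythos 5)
Your construction of $x_+$ coincides with the paper's first cobordism, and your diagnosis that the naive candidate $f_-=(1\,\,h')$ is not complementary is correct. But the entire content of the lemma is the production of the second, \emph{complementary} cobordism, and at exactly that point your argument is a placeholder: you assert that $f_-$ can be modified, or $D_-$ enlarged to a mapping cylinder, so that $\lmat f_+\\f_-\rmat$ becomes homotopic to an upper-triangular map with $h$ and $h'$ on the diagonal, but you neither exhibit the modification nor verify that the modified pair still carries an ultraquadratic structure and satisfies the Poincar\'{e} condition. In fact, read literally the proposed fix cannot work: a homotopy of maps between direct sums is componentwise, so $\lmat f_+\\f_-\rmat$ being homotopic to such a triangular matrix forces $f_-|_C\simeq 0$ with $f_-|_{C'}$ an equivalence, and then the cycle (structure-extension) condition for $x_-$ forces $(f_-|_{C'}\otimes f_-|_{C'})(\psi')\simeq 0$, i.e.\ $\psi'\simeq 0$, contradicting the Poincar\'{e} property of $(C',\psi')$ unless $C'$ is contractible. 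If instead you mean ``triangular after composing with some self-equivalence of $D_+\oplus D_-$'', no such equivalence is produced from $(h,h',H,H',\chi,\chi')$.

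There is also a structural reason the sketch cannot be completed from the data you allow yourself: it uses nothing about the ultraquadratic structure beyond the existence of $\chi$, so if it worked it would prove, verbatim, homotopy invariance of double-cobordism for $\eps$--symmetric Poincar\'{e} complexes over an arbitrary ring. That statement is false without a half-unit (which is why the symmetric analogue, Lemma \ref{lem:welldef2}, assumes one): in dimension $0$ it would make $(P,\theta)\oplus(P,-\theta)$ hyperbolic, which already fails for $\theta=\langle 1\rangle$ over $\Z$. The paper's proof extracts the missing ingredient from the Poincar\'{e} ultraquadratic structure itself: setting $\phi=\psi+T_\eps\psi$ (a homotopy equivalence by the Poincar\'{e} hypothesis) and $e=\psi\phi^{-1}$, one has $e^*\simeq\phi^{-1}T_\eps\psi$ and the key identity $\psi\simeq e\psi+\psi e^*$, so $e$ and $1-e$ play the role of a chain-level half-unit (the analogue of the Seifert projection $(\psi+\eps\psi^*)^{-1}\psi$). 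The second cobordism is then $((h(1-e)\,\,-heg)\co C\oplus C'\to C',(\delta\psi,\psi\oplus-\psi'))$, with $g$ a homotopy inverse of $h$: the identity above supplies the nullhomotopy $\delta\psi$, and the matrix $\lmat eg&g\\ h(1-e)g&-1\rmat$ is an explicit two-sided homotopy inverse of $\lmat h&1\\h(1-e)&-heg\rmat$, which is precisely the complementarity your outline leaves unproved. Without introducing $e$ (or some equivalent use of $\psi$), the step you defer is not a routine refinement but the heart of the lemma.
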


\begin{proof} Let $h\co (C,\psi)\to(C',\psi')$ be a given homotopy equivalence with homotopy inverse $g$. Write the chain map $\phi=\psi+T_\eps\psi$, with choice of chain homotopy inverse $\phi^{-1}$. Define a chain map $e=\psi\phi^{-1}$, so that $e^*\simeq \phi^{-1}T_\eps\psi$. Consider the chain map\[(h(1-e)\,\, -heg)\co C\oplus C'\to C'\]and calculate \[\begin{array}{rcl}

h(1-e)\psi(1-e)^*h^*-(heg)\psi'(heg)^*&\simeq & h(\psi-(e\psi+\psi e^*))h^*\\
&\simeq& h(\psi-(e\psi+ eT_\eps\psi))h^*\\
&\simeq& 0.
\end{array}\] Writing $\delta\psi$ for this nullhomotopy, we thus have well-defined cobordisms\[\begin{array}{lccrl}
((&h&1&)\co &C\oplus C'\to C',(0,\psi\oplus-\psi')),\\
((&h(1-e)&-heg&)\co &C\oplus C'\to C',(\delta\psi,\psi\oplus-\psi')),
\end{array}\] and they are complementary as there exist the following left and right chain homotopy inverses\[\begin{array}{rcl}\left(\begin{array}{cc}eg&g\\ h(1-e)g&-1\end{array}\right)\left(\begin{array}{cc}h&1\\h(1-e)&-heg\end{array}\right)\simeq\left(\begin{array}{cc}1&0\\0&1\end{array}\right)&\co &C\oplus C'\to C\oplus C',\\
&&\\
\left(\begin{array}{cc}h&1\\h(1-e)&-heg\end{array}\right)\left(\begin{array}{cc}eg&g\\ h(1-e)g&-1\end{array}\right)\simeq \left(\begin{array}{cc}1&0\\0&1\end{array}\right)&\co &C'\oplus C'\to C'\oplus C'.\end{array}\proved\]\end{proof}

\begin{proposition}\label{prop:DLA}For $n\geq0$, double-cobordism is an equivalence relation on the set of homotopy equivalence classes of $n$--dimensional, $\eps$--ultraquadratic, Poincar\'{e} complexes over $R$. The equivalence classes form a group $\widehat{DL}_{n}(R,\eps)$, the \emph{$n$--dimensional, $\eps$--ultraquadratic double $L$--group of $R$}, with addition and inverses given by\[(C,\psi)+(C',\psi')=(C\oplus C',\psi\oplus\psi'),\qquad -(C,\psi)=(C,-\psi)\in \widehat{DL}_n(R,\eps).\]
\end{proposition}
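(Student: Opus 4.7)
The plan is to verify three properties of double-cobordism on homotopy equivalence classes of $n$-dimensional $\eps$-ultraquadratic Poincar\'e complexes: (a) double-cobordism is an equivalence relation; (b) direct sum descends to a commutative, associative operation with the zero complex as identity; and (c) every class has an inverse, given by $-(C,\psi)=(C,-\psi)$. Crucially, both (a) and (c) can be reduced to Lemma \ref{lem:welldef}.

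For (a), reflexivity is exactly Lemma \ref{lem:welldef} applied to the identity self-equivalence, and well-definedness across homotopy equivalence classes is the statement of that lemma. Symmetry is immediate by interchanging the roles of $(C,\psi)$ and $(C',\psi')$ in the double-cobordism data $\{x_+,x_-\}$ (negating the signs on the boundary structures as needed). For transitivity, given double-cobordisms $\{x_+,x_-\}$ from $(C,\psi)$ to $(C',\psi')$ and $\{y_+,y_-\}$ from $(C',\psi')$ to $(C'',\psi'')$, I would form $z_\pm:=x_\pm\cup_{(C',\psi')}y_\pm$ via the algebraic union. Each $z_\pm$ is an ultraquadratic Poincar\'e cobordism from $(C,\psi)$ to $(C'',\psi'')$ by standard algebraic glueing theory. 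The nontrivial step is that $\{z_+,z_-\}$ is itself complementary: writing $x_\pm\co C\oplus C'\to D_\pm$ and $y_\pm\co C'\oplus C''\to E_\pm$, the combined map $C\oplus C''\to (D_+\cup_{C'}E_+)\oplus(D_-\cup_{C'}E_-)$ must be shown to be a chain homotopy equivalence using the homotopy pushout structure of the two unions together with a five-lemma argument applied to the associated Puppe (Mayer--Vietoris) sequences.

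For (b), direct sums of complementary pairs of cobordisms are complementary, since a block diagonal of homotopy equivalences is a homotopy equivalence. Commutativity and associativity of addition are inherited from direct sum, and the zero complex is the identity. For (c), the essential observation is that a double-nullcobordism of $(C\oplus C,\psi\oplus-\psi)$ is literally the same data as a double-cobordism from $(C,\psi)$ to itself, since in both cases the source is $C\oplus C$ equipped with $\psi\oplus-\psi$. Applying Lemma \ref{lem:welldef} to $\id\co(C,\psi)\to(C,\psi)$ therefore produces a double-nullcobordism for free; the explicit complementary pair provided by the proof of that lemma is the cobordism $((1,1),0)$ together with $((1-e,-e),\delta\psi)$ where $e=\psi\phi^{-1}$, and crucially this argument does not require a half-unit in $R$.

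The main obstacle is transitivity. Reflexivity, symmetry, well-definedness of addition, and the existence of inverses are each either immediate or an essentially formal consequence of Lemma \ref{lem:welldef}; only transitivity demands a genuinely new input, namely the verification that the pushout glueing of two complementary pairs of cobordisms again yields a complementary pair. This is the one step where the interplay between homotopy pushout squares and the combinatorics of complementary direct-sum decompositions must be exploited, and so is the technical heart of the proposition.
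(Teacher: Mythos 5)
Your proposal is correct and follows essentially the same route as the paper: well-definedness, reflexivity and inverses are reduced to Lemma \ref{lem:welldef} (applied to the identity, with no half-unit needed), addition is the evident block-sum observation, and transitivity is handled by glueing the two double-cobordisms along $(C',\psi')$ and verifying that the glued cobordisms remain complementary. The paper carries out that last verification exactly in the way you gesture at, by identifying $C\oplus C''$ and $D''_+\oplus D''_-$ as cones on maps out of $C'\oplus C'$ and comparing them through a homotopy-commutative ladder whose middle map is the given equivalence $(C\oplus C')\oplus(C'\oplus C'')\simeq(D_+\oplus D_-)\oplus(D'_+\oplus D'_-)$, which is precisely your Puppe-sequence/five-lemma argument made explicit.
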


\begin{proof}Lemma \ref{lem:welldef} shows in particular that double-cobordism is well-defined and reflexive. It is clearly symmetric. To show transitivity consider two double-cobordisms\[\begin{array}{lcl}
	c_\pm&=&((f_\pm\,\,f'_\pm)\co C\oplus C'\to D_\pm,(\delta_\pm\psi,\psi\oplus-\psi')),\\
	c'_\pm&=&((\tilde{f}'_\pm\,\,f''_\pm)\co C'\oplus C''\to D_\pm,(\delta_\pm\psi',\psi'\oplus-\psi'')).\\
\end{array}\]We intend to re-glue the 4 cobordisms according to the schematic in Figure \ref{fig:transitivity}.

\begin{figure}[h]
\def\pictransitivity{\resizebox{0.8\textwidth}{!}{ \includegraphics{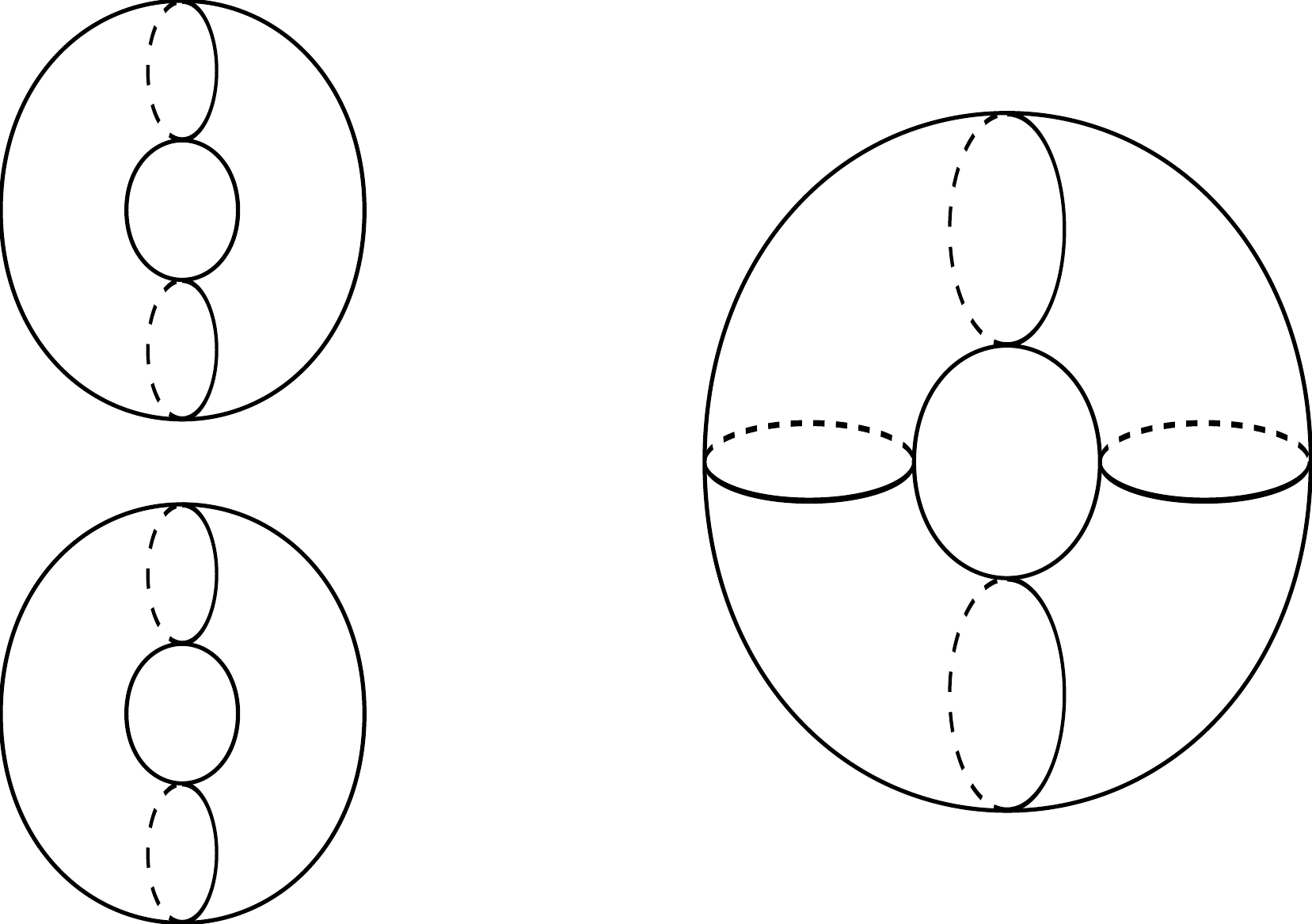}}}

\[\begin{xy} \xyimport(619.29,440){\pictransitivity}
,(82,405)*!L{C}
,(81,273)*!L{C'}
,(81,166)*!L{C'}
,(80,33)*!L{C''}
,(23,337)*!L{D_-}
,(133,337)*!L{D_+}
,(23,97)*!L{D_-'}
,(133,97)*!L{D_+'}
,(468,330)*!L{C}
,(468,110)*!L{C''}
,(373,220)*!L{C'}
,(560,220)*!L{C'}
,(390,290)*!L{D_-}
,(540,290)*!L{D_+}
,(390,140)*!L{D_-'}
,(540,140)*!L{D_+'}
,(180,390)*+{}="A";(360,340)*+{}="B"
,{"A"\ar@/^/"B"}
,(180,50)*+{}="C";(360,100)*+{}="D"
,{"C"\ar@/_/"D"}
\end{xy}\]
                \caption{Combining the double-nullcobordisms to show transitivity}
	       \label{fig:transitivity}
\end{figure}

As $c_+$ and $c'_+$ share a boundary component, likewise $c_-$ and $c'_-$, we form the two algebraic unions \[c_\pm\cup c'_\pm=(C\oplus C''\to D_\pm'',(\delta_\pm\psi'',\psi\oplus-\psi''))\]where $D''_\pm=D_\pm\cup_{C'}D'_\pm$ is the mapping cone\[C\left({\left(\begin{array}{c}f'_\pm\\\tilde{f}'_\pm\end{array}\right)\co C'\to D_\pm\oplus D'_\pm}\right).\]To see that these two new cobordisms are complementary, first note that as our initial two double-cobordisms were complementary we have\[\left(\begin{array}{cc}f_+&f'_+\\f_-&f'_-\end{array}\right)\oplus\left(\begin{array}{cc}\tilde{f}'_+&f''_+\\\tilde{f}'_-&f''_-\end{array}\right)\co (C\oplus C')\oplus (C'\oplus C'')\xrightarrow{\simeq} (D_+\oplus D_-)\oplus (D'_+\oplus D'_-).\]But as $C\oplus C''$ is homotopy equivalent to the cone on the obvious inclusion $i\co C'\oplus C'\to C\oplus C'\oplus C'\oplus C''$ there is a homotopy commutative diagram with the map $g$ defined to make the left-hand square homotopy commute \[\xymatrix{
C'\oplus C' \ar[r]^-i\ar[d]^{=} & C\oplus C'\oplus C'\oplus C'' \ar[d]^{\simeq}\ar[r] &C(i)\simeq C\oplus C'' \ar[d]\\
C'\oplus C'\ar[r]^-g&D_+\oplus D_-\oplus D'_+\oplus D'_-\ar[r] &C(g)=D''_+\oplus D''_-}\] and hence the induced vertical map on the cones is a homotopy equivalence.
\end{proof}

\subsection*{Symmetric $DL$--groups}

For the symmetric case we work with the coefficients $A$, with $(A,S)$ defining a localisation.

\begin{definition}\label{def:comp2}For $n\geq 0$, two cobordisms between $n$--dimensional $\eps$--symmetric Poincar\'{e} complexes $(C,\phi)$ and $(C',\phi')$ \[x_\pm:=(f_\pm\co C\oplus C'\to D_\pm,(\delta_\pm\phi,\phi\oplus-\phi')\in C(f^\%)_{n+1})\]are \emph{complementary} if the chain map\[\left(\begin{smallmatrix}f_+\\f_-\end{smallmatrix}\right)\co C\oplus C'\to D_+\oplus D_-\]is a homotopy equivalence. In which case we say $(C,\phi)$ and $(C',\phi')$ are \emph{double-cobordant} and that the set $\{x_+,x_-\}$ is a \emph{double-cobordism} between them.

The $S$--acyclic versions of these definitions are made in exactly the same way but by restricting to the category $h\C_+(A,S)$ and using $(A,S)$--cobordisms.
\end{definition}

\begin{lemma}\label{lem:welldef2}For $n\geq0$, if $(C,\phi)$, $(C',\phi')$ are homotopy equivalent $n$--dimensional, $\eps$--symmetric Poincar\'{e} complexes over $A$, then they are double-cobordant. If they are $S$--acyclic they are $(A,S)$--double-cobordant.
\end{lemma}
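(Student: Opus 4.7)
The strategy is to use the half-unit $s\in A$ to reduce to the already-proved ultraquadratic version Lemma~\ref{lem:welldef}. By Proposition~\ref{prop:multiple} (and its proof), when $A$ has a half-unit the natural projection $W^\%C\twoheadrightarrow C\otimes C$ is a chain homotopy equivalence, so the $\eps$--symmetric structure $\phi$ determines, and is determined up to homotopy by, its image in $C\otimes C$. Define an $\eps$--ultraquadratic refinement $\psi:=s\phi\in(C\otimes C)_n$, and similarly $\psi':=s\phi'$. Since $s+\overline{s}=1$ and $T_\eps\phi\simeq \phi$, the symmetrisation $(1+T_\eps)\psi$ is chain homotopic to $\phi$, so $(C,\psi)$ is an $\eps$--ultraquadratic Poincar\'{e} refinement of $(C,\phi)$, and likewise for $(C',\psi')$.

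Next, I would promote the given symmetric homotopy equivalence $h\co (C,\phi)\to(C',\phi')$ to an ultraquadratic one between $(C,\psi)$ and $(C',\psi')$. By hypothesis $h^\%(\phi)-\phi'$ is a boundary in $W^\%C'$; applying the projection $W^\%C'\to C'\otimes C'$ shows $(h\otimes h)(\phi)-\phi'$ is a boundary in $C'\otimes C'$, and multiplying this boundary relation by the central element $s$ shows that $(h\otimes h)(\psi)-\psi'$ is a boundary too. Thus $h$ is simultaneously an $\eps$--ultraquadratic homotopy equivalence, and Lemma~\ref{lem:welldef} yields two complementary $\eps$--ultraquadratic cobordisms $x_\pm$ between $(C,\psi)$ and $(C',\psi')$, both with cobordism complex $C'$. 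Symmetrising each cobordism $x_\pm$ via Definition~\ref{def:symmetrisation} produces two $\eps$--symmetric cobordisms between $(C,\phi)$ and $(C',\phi')$; symmetrisation does not alter the underlying chain maps, and the complementarity condition depends only on these underlying maps, so the pair of symmetrised cobordisms is a symmetric double-cobordism as required.

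For the $(A,S)$--acyclic version, the explicit cobordisms produced in the proof of Lemma~\ref{lem:welldef} have target $C'$, which lies in $\C_+(A,S)$ by hypothesis, so they automatically belong to $h\C_+(A,S)$ and deliver an $(A,S)$--double-cobordism with no further work. The only delicate point in the whole argument is the second paragraph, namely that a symmetric homotopy equivalence lifts to an ultraquadratic one after multiplying the structure by $s$. This is ultimately forced by centrality of $s$ together with the chain equivalence $W^\%C\simeq C\otimes C$ supplied by Proposition~\ref{prop:multiple}; without a half-unit this lifting would fail, just as Proposition~\ref{prop:multiple} itself does.
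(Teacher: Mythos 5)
Your overall strategy --- refine $\phi$ to the ultraquadratic structure $s\phi$ via the half-unit, transport the homotopy equivalence along the projection $W^\%C\to C\otimes C$, invoke Lemma \ref{lem:welldef}, and then symmetrise --- is genuinely different from the paper's proof, which never leaves the symmetric world: there the half-unit is inserted directly into the underlying chain maps, giving the two explicit cobordisms $((h\,\,1)\co C\oplus C'\to C',(0,\phi\oplus-\phi'))$ and $((h\overline{s}\,\,-s)\co C\oplus C'\to C',(\delta\phi,\phi\oplus-\phi'))$, with the nullhomotopy supplied by the computation $h\overline{s}\phi(h\overline{s})^*-s\phi's^*\simeq 0$ and complementarity checked exactly as in Lemma \ref{lem:welldef}. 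Your reduction is attractive because it reuses the ultraquadratic lemma verbatim, and your observations that Proposition \ref{prop:multiple} is only needed for complexes (so its failure for pairs is harmless) and that $D_\pm=C'$ keeps everything inside $h\C_+(A,S)$ in the torsion case are both correct.

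There is, however, a gap at the final step. The symmetrisation of your cobordisms $x_\pm$ is a pair of symmetric Poincar\'{e} cobordisms whose boundary structure is $(1+T_\eps)(s\phi)\oplus-(1+T_\eps)(s\phi')$, not $\phi\oplus-\phi'$. These cycles are only homologous to $\phi$ and $\phi'$ in $W^\%C$ and $W^\%C'$ (for instance $(1+T_\eps)(s\phi)$ has vanishing higher components, while $\phi$ need not), so what you have actually produced is a double-cobordism between $(C,(1+T_\eps)s\phi)$ and $(C',(1+T_\eps)s\phi')$. You cannot transfer it to $(C,\phi)$ and $(C',\phi')$ by appealing to invariance of the double-cobordism relation under homotopy equivalence, because that invariance is precisely what Lemma \ref{lem:welldef2} is establishing; as written the claim ``produces two $\eps$--symmetric cobordisms between $(C,\phi)$ and $(C',\phi')$'' is unjustified and risks circularity. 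The gap is closable by a cycle-level correction: write $(1+T_\eps)s\phi-\phi=d\chi$ and $(1+T_\eps)s\phi'-\phi'=d\chi'$, and replace the structure $(\delta_\pm,(1+T_\eps)s\phi\oplus-(1+T_\eps)s\phi')$ of each symmetrised pair by $(\delta_\pm\pm f_\pm^\%(\chi\oplus-\chi'),\,\phi\oplus-\phi')$. This is again a cycle in $C(f_\pm^\%)_{n+1}$, the relative duality maps change only by a chain homotopy (so the pairs remain Poincar\'{e}), and the underlying maps $f_\pm$ --- hence the complementarity condition --- are untouched. With that addendum your argument becomes a valid alternative to the paper's direct construction.
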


\begin{proof}We work similarly to the proof of Lemma \ref{lem:welldef}. Let $h\co (C,\phi)\to(C',\phi')$ be a given homotopy equivalence. Recall our half-unit $s$ is assumed to be a central unit, so that in particular its action by multiplication on $A$--module chain complexes commutes with any chain map. The dual chain map to multiplication by $s$ is multiplication by the central unit $\overline{s}$. Calculate \[h\overline{s}\phi(h\overline{s})^*-s\phi's^*\simeq 0.\]Writing $\delta\phi$ for this nullhomotopy, we thus have well-defined cobordisms\[\begin{array}{lccrl}
((&h&1&)\co &C\oplus C'\to C',(0,\phi\oplus-\phi')),\\
((&h\overline{s}&-s&)\co &C\oplus C'\to C',(\delta\phi,\phi\oplus-\phi')).
\end{array}\]These cobordisms are easily calculated to be complementary as in Lemma \ref{lem:welldef}.
\end{proof}

\begin{proposition}\label{prop:DLAS}For $n\geq0$, double-cobordism is an equivalence relation on the set of homotopy equivalence classes of $n$--dimensional, $\eps$--symmetric, Poincar\'{e} complexes over $A$. With addition and inverses as in Proposition \ref{prop:DLA}, there is a well-defined group:\[DL^n(A,\eps):=\left\{\begin{array}{c}\text{double-cobordism classes of $n$--dimensional}\\ \text{$\eps$--symmetric Poincar\'{e} complexes}\end{array}\right\}.\]

Restricting to the category $h\C_+(A,S)$ and using $(A,S)$--cobordisms, there is similarly a well-defined group\[DL^n(A,S,\eps):=\left\{\begin{array}{c}\text{$(A,S)$--double-cobordism classes of $(n+1)$--dimensional}\\ \text{$S$--acyclic $(-\eps)$--symmetric Poincar\'{e} complexes}\end{array}\right\}.\]
\end{proposition}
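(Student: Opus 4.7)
The plan is to closely mirror the strategy already used in the proof of Proposition \ref{prop:DLA}, transferring the argument from the ultraquadratic setting to the symmetric setting. Most of the work will be structurally identical, with the two genuine differences being (i) the symmetric structures live in the $W^\%$ complex rather than directly in $C \otimes C$, so Ranicki's algebraic glueing for $\eps$--symmetric pairs (\cite[\textsection 1.7]{MR620795}) is what assembles new cobordisms, and (ii) for $DL^n(A,S,\eps)$ we must additionally verify that every construction preserves $S$--acyclicity.

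First, Lemma \ref{lem:welldef2} already shows that homotopy equivalent Poincar\'e complexes are double-cobordant, so specialising to $h = \id_C$ gives reflexivity and simultaneously proves that double-cobordism descends to homotopy equivalence classes. Symmetry is immediate: given a double-cobordism $\{x_+, x_-\}$ between $(C,\phi)$ and $(C',\phi')$, swapping the roles of $C$ and $C'$ and negating the symmetric structures as in the definition produces a double-cobordism in the opposite direction.

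For transitivity I follow the schematic in Figure \ref{fig:transitivity}. Given double-cobordisms $\{c_+, c_-\}$ from $(C,\phi)$ to $(C',\phi')$ and $\{c'_+, c'_-\}$ from $(C',\phi')$ to $(C'',\phi'')$, I form the algebraic unions $c_\pm \cup c'_\pm$ along the common boundary $(C',\phi')$, obtaining cobordisms from $(C,\phi)$ to $(C'',\phi'')$ with underlying chain complexes $D''_\pm = D_\pm \cup_{C'} D'_\pm$. To check complementarity of the new cobordisms, I reproduce the diagram chase from Proposition \ref{prop:DLA}: complementarity of the original pairs of cobordisms gives a homotopy equivalence
\[\left(\begin{smallmatrix} f_+ & f'_+ \\ f_- & f'_- \end{smallmatrix}\right) \oplus \left(\begin{smallmatrix} \tilde{f}'_+ & f''_+ \\ \tilde{f}'_- & f''_- \end{smallmatrix}\right) \co (C \oplus C') \oplus (C' \oplus C'') \xrightarrow{\simeq} (D_+ \oplus D_-) \oplus (D'_+ \oplus D'_-),\]
and the same cone argument (viewing $C \oplus C''$ as the mapping cone of the diagonal inclusion $C' \oplus C' \hookrightarrow C \oplus C' \oplus C' \oplus C''$) shows the induced map $C \oplus C'' \to D''_+ \oplus D''_-$ is a homotopy equivalence.

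For the group structure, associativity and commutativity of the direct-sum addition are clear, the zero object is represented by $(0,0)$, and Lemma \ref{lem:welldef2} shows this class is represented by any double-nullcobordant complex. The inverse formula is verified by exhibiting an explicit double-nullcobordism of $(C,\phi) \oplus (C,-\phi)$, namely the pair
\[((\,\id_C\,\,\id_C\,)\co C\oplus C\to C,(0,\phi\oplus-\phi)),\qquad ((\,s\,\,\overline{s}\,)\co C\oplus C\to C,(\delta\phi,\phi\oplus-\phi)),\]
with the nullhomotopy $\delta\phi$ built from the half-unit identity $s\phi s^* + \overline{s}(-\phi)\overline{s}^* \simeq (s\overline{s} + \overline{s}s -1)\phi - \phi \simeq 0$, exactly in the style of Lemma \ref{lem:welldef2}; complementarity then follows by the same $2\times 2$ matrix manipulation. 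For the torsion case $DL^n(A,S,\eps)$, I restrict every construction above to the subcategory $h\C_+(A,S)$ and use $(A,S)$--cobordisms; the only extra check is that the algebraic union $D''_\pm = D_\pm \cup_{C'} D'_\pm$ remains $S$--acyclic, which is immediate from the Mayer--Vietoris sequence after localising at $S$, since $C'$, $D_\pm$, $D'_\pm$ are all $S$--acyclic.

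The main obstacle I anticipate is bookkeeping rather than mathematical content: one must carefully keep track of signs and of the compatibility of the various nullhomotopies (both the Poincar\'e-pair nullhomotopies $\delta_\pm \phi$ and the homotopies required for algebraic glueing), and verify that the induced structure on the glued pair is indeed Poincar\'e. All the underlying algebraic-glueing machinery for symmetric pairs is set up in Ranicki \cite[\textsection 1.7]{MR620795}, so this reduces to checking that complementarity is preserved under glueing, which is the one place where the argument genuinely needs to be written out.
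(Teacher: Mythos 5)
Your overall strategy is the same as the paper's: the published proof of Proposition \ref{prop:DLAS} literally reads ``Exactly as in Proposition \ref{prop:DLA}'', with Lemma \ref{lem:welldef2} playing the role of Lemma \ref{lem:welldef}, and your spelled-out treatment of reflexivity, symmetry, transitivity via the algebraic union together with the cone argument, and the Mayer--Vietoris check that $D''_\pm=D_\pm\cup_{C'}D'_\pm$ stays $S$--acyclic, is exactly what that reference intends.

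There is, however, a concrete error in your verification of inverses. The two nullcobordisms you propose for $(C\oplus C,\phi\oplus-\phi)$, with underlying morphisms $(\,\id\;\;\id\,)$ and $(\,s\;\;\overline{s}\,)$, need not be complementary: the map $\lmat 1&1\\ s&\overline{s}\rmat\co C\oplus C\to C\oplus C$ reduces after a row operation to multiplication by $\overline{s}-s$ on a summand, and $\overline{s}-s$ is in general not a unit (take $A=\Z[\tfrac12]$ with trivial involution and $s=\tfrac12$, so $\overline{s}-s=0$). The correct choice is the one given by Lemma \ref{lem:welldef2} with $h=\id$, namely $(\,1\;\;1\,)$ and $(\,\overline{s}\;\;-s\,)$: then $\lmat 1&1\\ \overline{s}&-s\rmat$ is invertible up to homotopy since $s+\overline{s}=1$ (this is the same matrix identity used for the hyperbolic lagrangian earlier in the paper), and the structure condition holds on the nose because $\overline{s}s\phi_0-s\overline{s}\phi_0=0$ by centrality of $s$, so one may take $\delta\phi=0$. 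Relatedly, your intermediate identity $s\phi s^*+\overline{s}(-\phi)\overline{s}^*\simeq(s\overline{s}+\overline{s}s-1)\phi-\phi$ is garbled: since the dual of multiplication by $s$ is multiplication by $\overline{s}$, the left-hand side equals $s\overline{s}\phi-\overline{s}s\phi=0$ exactly, so no such manipulation (and no nullhomotopy) is needed. With this correction, which is precisely the specialisation of Lemma \ref{lem:welldef2} to $h=\id$, your argument goes through and coincides with the paper's.
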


\begin{proof}Exactly as in Proposition \ref{prop:DLA}.
\end{proof}

\subsection{Surgery above and below the middle dimension}\label{subsec:surgery}

We now turn to the question of calculating double $L$--groups for various rings and localisations. The only known programme for calculating chain complex bordism groups begins by proving that the groups are periodic in the dimension. Such periodicity is typically `skew 2--fold', and hence induces 4--fold periodicity -- for instance when $n\geq0$, Ranicki \cite{MR620795} shows that the following are isomorphic\[\widehat{L}_n(R,\eps)\cong \widehat{L}_{n+2}(R,-\eps), \quad L^n(A,\eps)\cong L^{n+2}(A,-\eps),\quad L^n(A,S,\eps)\cong L^{n+2}(A,S,-\eps),\] and hence each exhibits 4--fold periodicity. Skew-periodicity reduces the problem of calculation to the low-dimensional groups (those in dimension 0, 1). These groups can often be calculated in terms of more familiar tools such as Witt groups of forms or `formations' on f.g.\ projective or torsion modules (see \cite{MR620795}).

We will now investigate the extent to which this programme can be carried out for the $DL$--groups. We will prove periodicity in some restricted cases by a new technique called \emph{surgery above and below the middle dimension}, and in these cases reduce high-dimensional double $L$--theory to the low-dimensional case. In Section \ref{sec:DWgroups} we will relate the 0--dimensional double $L$--groups to what we called in \cite{OrsonA} the \emph{double Witt groups} and hence use the results of \cite{OrsonA} to calculate them for certain rings and localisations.

To begin the investigation, we will need another piece of technology from the literature, defined by Ranicki \cite{MR560997}. The \emph{skew-suspension} will allow comparison of structured chain complexes in different dimensions. Given a chain complex $C$ in $h\B_+(A)$, there is a homotopy equivalence defined by\[\overline{S}\co \Sigma^2(C^t\otimes_A C)\xrightarrow{\simeq} (\Sigma C)^t\otimes_A(\Sigma C);\qquad x\otimes y\mapsto (-1)^{|x|}x\otimes y\]in $\Ch(\Z[\Z/2\Z])$, where it is understood that the involution on $C^t\otimes_A C$ uses $T_\eps$ and the involution on $(\Sigma C)^t\otimes_A(\Sigma C)$ uses $T_{-\eps}$. Applying the $W^\%$ functor to this chain equivalence we obtain the homotopy equivalence\[\overline{S}\co \Sigma^{2}W_\eps ^\%C\xrightarrow{\simeq} W^\%_{-\eps}\Sigma C\]in $\Ch(\Z)$. There is also a relative version: given a morphism $f\co C\to D$ in $\B_+(A)$ there is a homotopy equivalence in $\Ch(\Z)$\[\overline{S}\co \Sigma^2C(f^\%\co W_\eps^\%C\to W^\%_\eps D)\xrightarrow{\simeq}C((\Sigma f)^\%\co W_{-\eps}^\%(\Sigma C)\to W^\%_{-\eps}(\Sigma D)).\]

\begin{definition}For $n\geq 0$, the \emph{skew-suspension} of an $n$--dimensional $\eps$--symmetric (Poincar\'{e}) complex $(C,\phi)$ is the $(n+2)$--dimensional, $(-\eps)$--symmetric (Poincar\'{e}) complex \[\overline{S}(C,\phi)=(\Sigma C,\overline{S}\phi).\]

The \emph{skew-suspension} of an $(n+1)$--dimensional $\eps$--symmetric $(f\co C\to D,(\delta\phi,\phi))$ is the $(n+3)$--dimensional, $(-\eps)$--symmetric (Poincar\'{e}) pair \[\overline{S}(f\co C\to D,(\delta\phi,\phi)):=(\Sigma f\co \Sigma C\to \Sigma D,\overline{S}(\delta\phi,\phi)).\]

The \emph{skew-suspension} of pairs and complexes in the $\eps$-ultraquadratic setting is defined similarly.
\end{definition}

\begin{proposition}\label{prop:skewsusp}For $n\geq 0$, the skew-suspension gives well-defined injective homomorphisms\[\begin{array}{rrclll}\overline{S}\co &\widehat{DL}_n(R,\eps)&\hookrightarrow &\widehat{DL}_{n+2}(R,-\eps);&\qquad&[(C,\psi)]\mapsto [\overline{S}(C,\psi)],\\
\overline{S}\co &DL^n(A,\eps)&\hookrightarrow &DL^{n+2}(A,-\eps);&\qquad&[(C,\phi)]\mapsto [\overline{S}(C,\phi)],\\
\overline{S}\co &DL^{n}(A,S,\eps)&\hookrightarrow &DL^{n+2}(A,S,-\eps);&\qquad&[(C,\phi)]\mapsto [\overline{S}(C,\phi)].\end{array}\]
\end{proposition}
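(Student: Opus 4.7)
The plan is to verify three things for each of the three skew-suspension maps: well-definedness on double-cobordism classes, the homomorphism property, and injectivity.

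Well-definedness and the homomorphism property are essentially routine given the standard $L$-theory facts already recorded. The chain equivalences $\overline{S}$ displayed above show that skew-suspension sends $n$-dimensional $\eps$-symmetric (resp.\ $\eps$-ultraquadratic) Poincar\'e complexes to $(n+2)$-dimensional $(-\eps)$-symmetric (resp.\ $(-\eps)$-ultraquadratic) Poincar\'e complexes, and pairs to pairs, compatibly with homotopy equivalence. Since $H_r(\Sigma C)\cong H_{r-1}(C)$, skew-suspension preserves $S$-acyclicity as well. The key new point for the \emph{double} setting is that $\Sigma$ preserves homotopy equivalences of chain maps: if two cobordisms $x_\pm$ are complementary via $\lmat f_+\\f_-\rmat\co C\oplus C'\xrightarrow{\simeq}D_+\oplus D_-$, then $\overline{S}(x_\pm)$ are complementary via $\lmat\Sigma f_+\\\Sigma f_-\rmat$, the suspension of a homotopy equivalence. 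Additivity is immediate from $\Sigma(C\oplus C')=\Sigma C\oplus\Sigma C'$ and the compatibility of $\overline{S}$ with direct sums of structures.

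For injectivity, suppose $\overline{S}(C,\phi)$ admits a double-null-cobordism $\{y_+,y_-\}$ with $y_\pm=(g_\pm\co\Sigma C\to E_\pm,(\delta_\pm\phi',\overline{S}\phi))$ and $\lmat g_+\\g_-\rmat\co\Sigma C\xrightarrow{\simeq}E_+\oplus E_-$. I want to construct a double-null-cobordism $\{x_+,x_-\}$ of $(C,\phi)$ with $x_\pm=(f_\pm\co C\to D_\pm,(\delta_\pm\phi,\phi))$ and $\lmat f_+\\f_-\rmat$ a homotopy equivalence. The strategy is to replace each pair $y_\pm$, up to cobordism of pairs (which does not change its double-cobordism role), by a skew-suspended pair $\overline{S}(x_\pm)=(\Sigma f_\pm\co\Sigma C\to\Sigma D_\pm,\overline{S}(\delta_\pm\phi,\phi))$. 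In the symmetric and torsion symmetric cases this can be done by Ranicki's algebraic surgery on pairs, which is unobstructed; in the ultraquadratic case one restricts to the dimension range where algebraic surgery remains unobstructed (cf.\ Proposition \ref{prop:surgery}). Once both $y_\pm$ have been replaced by skew-suspended pairs, desuspension yields the desired $x_\pm$, and the complementary condition $\lmat\Sigma f_+\\\Sigma f_-\rmat\simeq\lmat g_+\\g_-\rmat$ being a homotopy equivalence desuspends to $\lmat f_+\\f_-\rmat$ being a homotopy equivalence.

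The main obstacle is coordinating the two independent algebraic surgeries so that the complementary condition is preserved throughout. I plan to handle this by tracking the algebraic union $y_+\cup_{\overline{S}(C,\phi)}-y_-$: the complementary condition is equivalent to saying this closed Poincar\'e complex is chain equivalent to $\Sigma\overline{S}(C,\phi)$, and compatible surgeries on $y_+$ and $y_-$ that together preserve this equivalence can be arranged, producing the simultaneous skew-suspended replacements needed. The torsion and ultraquadratic variants follow the same plan, with the extra bookkeeping that each surgery stays inside $h\C_+(A,S)$ or $h\B_+(R)$ respectively.
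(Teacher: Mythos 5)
Your treatment of well-definedness and additivity is fine and coincides with the paper's: complementary nullcobordisms skew-suspend to complementary nullcobordisms, and $\overline{S}$ is compatible with direct sums and with homotopy equivalence. The gap is in the injectivity argument. Your plan — replace the given nullcobordisms $y_\pm$ of $\overline{S}(C,\phi)$ by skew-suspended pairs via algebraic surgery and then desuspend — runs into two problems. First, in the ultraquadratic case surgery is genuinely obstructed (Proposition \ref{prop:surgery}), and ``restricting to the dimension range where algebraic surgery remains unobstructed'' is not an available move: the obstruction is the nullhomotopy of two specific composite maps, not a dimension condition, and the paper only achieves unobstructed ultraquadratic surgery under the extra hypothesis that the ring has homological dimension $0$ (Theorem \ref{thm:aboveandbelow}), whereas Proposition \ref{prop:skewsusp} is asserted for arbitrary $R$. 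Second, even in the symmetric cases, the crux of your plan — coordinating the two surgeries on $y_+$ and $y_-$ so that the complementary condition survives — is exactly the delicate point, and you only assert that it ``can be arranged''; preserving complementarity under surgery is precisely what forces the restrictive hypotheses later in the paper, so it cannot be taken for granted here. Note also that your stated criterion is off: complementarity of $y_\pm$ is equivalent to the algebraic union $y_+\cup -y_-$ having \emph{contractible} underlying complex (it is, up to equivalence, the cone on $\lmat g_+\\ g_-\rmat\co \Sigma C\to E_+\oplus E_-$), not to its being chain equivalent to $\Sigma\overline{S}(C,\phi)$.

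The paper's proof shows that no surgery is needed because the double structure itself does the work. If $(f_\pm\co \Sigma C\to D_\pm,(\Phi_\pm,\overline{S}\phi))$ are complementary nullcobordisms, then $H_r(D_+)\oplus H_r(D_-)\cong H_r(\Sigma C)$, which vanishes for $r\leq 0$; hence the formal desuspensions $\Sigma^{-1}D_\pm$ are still objects of $h\B_+(A)$, so the pairs may be formally desuspended, and naturality of $\overline{S}\co \Sigma^{2}W_\eps^\%C\to W^\%_{-\eps}\Sigma C$ identifies $(\overline{S})^{-1}(\Phi_\pm,\overline{S}\phi)$ with $((\overline{S})^{-1}\Phi_\pm,\phi)$, so the desuspended pairs are genuine $(n+1)$-dimensional $\eps$-symmetric (resp.\ ultraquadratic, $S$-acyclic) complementary nullcobordisms of $(C,\phi)$. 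This argument is uniform over all three families of double $L$--groups and needs no hypothesis on the coefficient ring; to repair your proof you should replace the surgery step by this direct desuspension, which is exactly where the complementary condition (unused at this point in your argument) is needed.
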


\begin{proof}We consider only the statement for the groups $DL^n(A,\eps)$, the $S$--acyclic statement and the ultraquadratic statement being entirely similar.

If $(C,\phi)\in DL^n(A,\eps)$ admits complementary nullcobordisms $(f_\pm\co C\to D_\pm,(\delta_\pm\phi,\phi))$ then the skew-suspensions \[\overline{S}(f_\pm\co C\to D_\pm,(\delta_\pm\phi,\phi)):=(\Sigma f_\pm\co \Sigma C\to \Sigma D,\overline{S}(\delta\phi,\phi))\] are complementary nullcobordisms for $\overline{S}(C,\phi)\in DL^{n+2}(A,-\eps)$. Therefore the homomorphism is well defined.

To show injectivity, consider the general situation of a pair $x$ given by formal skew-desuspension an $(n+3)$--dimensional $(-\eps)$--symmetric pair \[x:=(\Sigma^{-1} f\co \Sigma^{-1} C\to \Sigma^{-1} D,(\overline{S})^{-1}(\delta\phi,\phi)),\]where $(\overline{S})^{-1}$ is a choice of homotopy inverse for $\overline{S}$. Such a formal skew-desuspension is an $(n+1)$--dimensional $\eps$--symmetric pair if and only if the morphism $\Sigma^{-1}f$ is in $h\B_+(A)$.

Now, more specifically, suppose that $(C,\phi)$ is an $n$--dimensional $\eps$--symmetric complex and that there are complementary nullcobordisms $(f_\pm\co \Sigma C\to D_\pm,(\Phi_\pm,\overline{S}\phi))$. Then the condition of being complementary gives $H_r(\Sigma C)\cong H_r(D_+)\oplus H_r(D_-)$ so that the desuspensions $\Sigma^{-1}D_\pm$ are in $h\B_+(A)$ because $H_r(\Sigma C)=0$ for $r\leq 0$. Hence $(\Sigma^{-1}f_\pm\co C\to \Sigma^{-1} D_\pm,(\overline{S})^{-1}(\Phi_\pm,\overline{S}\phi))$ are complementary nullcobordisms. The skew-suspension morphism $\overline{S}\co \Sigma^{2}W_\eps ^\%C\xrightarrow{\simeq} W^\%_{-\eps}\Sigma C$ is natural, and hence we also have that \[(\overline{S})^{-1}(\Phi_\pm,\overline{S}\phi)\simeq ((\overline{S})^{-1}\Phi_\pm,\phi)\in C((\Sigma^{-1}f)^\%)_{n+1}.\]Hence $(C,\phi)\sim 0\in DL^n(A,\eps)$ as required.

\end{proof}

We now move on to the cases in which we have been able to invert the skew-suspension map in double $L$--theory using a new technique called \emph{surgery above and below the middle dimension}. Our technique relies on Ranicki's concept of \emph{algebraic surgery} \cite[\textsection 4]{MR560997}. This is the process, in the setting of chain complexes with structure, which mimics geometric surgery on compact manifolds.

To describe algebraic surgery, begin with a possibly non-Poincar\'{e} $(n+2)$--dimensional $\eps$--symmetric (resp.\ $\eps$--ultraquadratic) pair \[x=(f\co C\to D,(\delta\phi,\phi))\qquad\text{(resp.\ $x=(f\co C\to D,(\delta\psi,\psi))$}\] with homotopy cofibration sequence \[\xymatrix{C\ar[r]^-{f}&D\ar[r]^-{e} &C(f)}.\]Recall we have $\lmat\delta\phi_0\\ \pm\phi_0 f^*\rmat\co D^{n+2-*}\to C(f)$ (in the $\eps$--ultraquadratic case, we use the symmetrisation of Definition \ref{def:symmetrisation}) and define $C':=\Sigma^{-1}C\left(\lmat\delta\phi_0\\ \pm\phi_0 f^*\rmat\right)$ so that there is a homotopy cofibration sequence\[\xymatrix{\dots\ar[r]&C'\ar[r]^-{f'}& D^{n+2-*}\ar[rr]^-{\lmat\delta\phi_0\\ \pm\phi_0 f^*\rmat}&& C(f)\ar[r]^-{e'}& \Sigma C'\ar[r]&\dots}\]with $f'$ the projection and $e'$ the inclusion. In the symmetric case Ranicki \cite[\textsection 4]{MR560997} showed that there is always a naturally defined $(n+2)$--dimensional $\eps$--symmetric pair\[x'=(f'\co C'\to D^{n+2-*},(\delta\phi',\phi')),\]such that there is a homotopy equivalence of Thom constructions\[(C(f),(\delta\phi/\phi))\simeq (C(f'),(\delta\phi'/\phi')).\]In the ultraquadratic case there are the following obstructions to building such a complex.

\begin{proposition}\label{prop:surgery}For an $(n+2)$--dimensional $\eps$--ultraquadratic pair $x=(f\co C\to D,(\delta\psi,\psi))$, the morphism $f':C'\to D^{n+2-*}$ (defined above) forms part of an $(n+2)$--dimensional $\eps$--ultraquadratic pair \[x'=(f'\co C'\to D^{n+2-*},(\delta\psi',\psi')),\]such that $(C(f),(\delta\psi/\psi))\simeq (C(f'),(\delta\psi'/\psi'))$, \emph{if and only if} the morphisms\[e'\circ(\delta\psi/\psi)_0\co C(f)^{n+2-*}\to C(f)\to \Sigma C',\quad e'\circ T_\eps(\delta\psi/\psi)_0\co C(f)^{n+2-*}\to C(f)\to \Sigma C',\]are nullhomotopic.
\end{proposition}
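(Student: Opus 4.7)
My plan is to reformulate the proposition as a lifting problem for the Thom construction and identify the obstruction via the slant equivalence. First, I would set up the Puppe sequence
\[ C' \xrightarrow{f'} D^{n+2-*} \xrightarrow{g} C(f) \xrightarrow{e'} \Sigma C' \]
arising from $C' = \Sigma^{-1} C(g)$ where $g = \lmat \delta\phi_0 \\ \pm \phi_0 f^* \rmat$. Rotating this sequence identifies $C(f') \simeq C(f)$, so the Thom construction $\delta\psi'/\psi'$ of any ultraquadratic pair structure on $f'$ lives in $C(f') \otimes C(f') \simeq C(f) \otimes C(f)$. The proposition then becomes: when does the cycle $\delta\psi/\psi$ lie (up to homotopy) in the image of the map $\Phi_j : C(f' \otimes f') \to C(f') \otimes C(f')$ induced by the Thom construction on cycles?

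For necessity, I would analyse $\Phi_j$ directly. Writing $E = D^{n+2-*}$, so that $C(f') = E \oplus \Sigma C'$, the slant-image of $\Phi_j$ consists of chain maps $C(f')^{n+2-*} \to C(f')$ which factor (up to homotopy) through the inclusion $E \hookrightarrow C(f')$, because the construction assembles $\delta\psi'_0$ on $E^{n+2-*} = D$ and $\psi'_0$ on the desuspended piece into a map whose image is concentrated in $E$. Since $e'$ annihilates $E \subset C(f')$ up to homotopy, post-composing with $e'$ kills any such chain map. The transpose $T_\eps(\delta\psi'/\psi')_0$ has an analogous factorisation by duality, hence also vanishes after composition with $e'$. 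Applying this to the image $\delta\psi'/\psi' \simeq \delta\psi/\psi$ gives the two stated nullhomotopies.

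For sufficiency, I would use the two given nullhomotopies to construct a lift. A nullhomotopy of $e' \circ (\delta\psi/\psi)_0$ factors the chain map $(\delta\psi/\psi)_0 \co C(f)^{n+2-*} \to C(f)$ through $g\co E \to C(f)$, yielding a chain map $C(f)^{n+2-*} \to E$ whose components furnish candidates for $\delta\psi'_0 \co D \to E$ and $\psi'_0 (f')^* \co (\Sigma C')^{n+2-*} \to E$. The second nullhomotopy provides, dually, the data needed to extract $\psi'_0 \co (C')^{n+1-*} \to C'$ and to guarantee that the assembled tuple really is a cycle in $C(f' \otimes f')_{n+2}$ rather than merely a collection fitting together up to homotopy. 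Applying $\Phi_j$ to this cycle then recovers $\delta\psi/\psi$ up to chain homotopy, as required.

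The main obstacle is the asymmetry between $\psi$ and $T_\eps \psi$ in the ultraquadratic setting, which is precisely what distinguishes this result from the unobstructed symmetric case of Ranicki \cite[\textsection 4]{MR560997}: there, the full $W^\%$ structure supplies a higher homotopy identifying $\phi_0$ with $T_\eps \phi_0$, so a single vanishing condition suffices and the surgery always proceeds. Here, with only the truncated $W_0$ structure of $\psi$, the chain maps $(\delta\psi/\psi)_0$ and $T_\eps(\delta\psi/\psi)_0$ are genuinely independent data, each requiring its own nullhomotopy. Careful tracking of signs in the Thom formula, and of the naturality of $-\otimes-$ with respect to the equivalence $C(f') \simeq C(f)$, will account for the bulk of the bookkeeping.
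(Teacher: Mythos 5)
Your overall strategy is the same as the paper's (recast the statement as a lifting problem for $\delta\psi/\psi$ along the map $C(f'\otimes f')\to C(f')\otimes C(f')$), but both directions as written have problems, and the second one is a genuine gap. For necessity, your justification rests on a false claim: the slant map of the Thom construction of a pair on $f'$ does \emph{not} have image concentrated in $E=D^{n+2-*}$. Explicitly (cf.\ the computation in the proof of Theorem \ref{thm:aboveandbelow}, following Ranicki), on $C(f')^{n+2-r}=E^{n+2-r}\oplus (C')^{n+1-r}$ one has $(\delta\psi'/\psi')_0=\lmat \delta\psi'_0&0\\ \pm\psi'_0(f')^*&0\rmat$, whose second row lands in the $\Sigma C'$ summand; it is the \emph{transposed} structure $T_\eps(\delta\psi'/\psi')_0$ whose image lies in $E$. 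The composite $e'\circ(\delta\psi'/\psi')_0$ still vanishes up to homotopy, but for a different reason: it equals $\pm\psi'_0\circ(f')^*\circ\iota^*$ with $\iota\co E\to C(f')$ the inclusion, and $(f')^*\iota^*=(\iota f')^*\simeq 0$ since $f'$ and $\iota$ are consecutive maps of a cofibration sequence. So your conclusion is correct but the stated reason is wrong (and, since in $\Ch(A)$ ``factors up to homotopy through $\iota$'' is \emph{equivalent} to ``becomes nullhomotopic after $e'$'', your factorisation claim is essentially the thing to be proved, not a proof of it).

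The serious gap is in sufficiency, which is where the content of the proposition lies. An ultraquadratic structure on $f'$ is an honest $(n+2)$--cycle $(\delta\psi',\psi')\in C(f'\otimes f')$, and what must be shown is that the homology class of $\delta\psi/\psi$ in $C(f')\otimes C(f')$ lifts along $\theta\co C(f'\otimes f')\to C(f')\otimes C(f')$. Your two nullhomotopies give, separately, factorisations of $(\delta\psi/\psi)_0$ and of $T_\eps(\delta\psi/\psi)_0$ through $g=\lmat\delta\phi_0\\ \pm\phi_0 f^*\rmat$, i.e.\ liftings of $\delta\psi/\psi$ and $T_\eps(\delta\psi/\psi)$ along $g\otimes\id$ and $\id\otimes g$ respectively; but the step ``the second nullhomotopy provides, dually, the data needed \dots\ to guarantee that the assembled tuple really is a cycle in $C(f'\otimes f')_{n+2}$'' is exactly the point at issue and is asserted rather than argued. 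What makes the two partial lifts combine into a single lift along $\theta$ is the identification of $C(f'\otimes f')$ as the homotopy \emph{pullback} of the two partial evaluation maps $\ev_r,\ev_l$ over $\lmat 1\\ T\rmat$ -- equivalently, the identification of the cofibre of $\theta$ with $(\Sigma C'\otimes C(f))\oplus(C(f)\otimes\Sigma C')$ -- which is the nontrivial input the paper takes from Crowley--L\"uck--Macko (Proposition 11.137 and diagram (11.138)). Without that pullback square (or an equivalent computation of $C(\theta)$ and the associated exact sequence), your argument establishes only that each slant chain map factors appropriately, not that a cycle $(\delta\psi',\psi')$ with $\theta(\delta\psi',\psi')\simeq\delta\psi/\psi$ exists; this is precisely the obstruction-theoretic content of the proposition, so the proof is incomplete as it stands.
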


\begin{proof}First we make a general observation. Let $g:E\to F$ be any chain map and $h:F\to C(g)$ the corresponding map into the cone. Then as $h\circ g\simeq 0$, we have \[\begin{array}{rrcl}(h\otimes h)\circ(g\otimes g)\simeq 0:&E\otimes E&\to& C(g)\otimes C(g),\\
(1\otimes h)\circ(g\otimes g)\simeq 0:&E\otimes E&\to& F\otimes C(g),\\
(h\otimes 1)\circ(g\otimes g)\simeq 0:&E\otimes E&\to& C(g)\otimes F,\end{array}
\]and by the universal property of the mapping cone these nullhomotopies determine chain maps from $C(g\otimes g)$ to $C(g)\otimes C(g)$, $F\otimes C(g)$, and $C(g)\otimes F$ respectively. Apply this observation to the map $f'$ to obtain the three maps, which we call\[\begin{array}{rrcl}\theta:&C(f'\otimes f')&\to& C(f')\otimes C(f'),\\
\ev_r:&C(f'\otimes f')&\to& D^{n+2-*}\otimes C(f'),\\
\ev_l:&C(f'\otimes f')&\to& C(f')\otimes D^{n+2-*},\end{array}\]respectively (cf.\ definitions in Crowley-L\"{u}ck-Macko \cite[p.\ 341]{tibor}). Now, by definition of $f'$, we also have that $C(f)\simeq C(f')$, and so the homotopy class of the cycle $\delta\psi/\psi\in C(f)\otimes C(f)$ (which is determined by algebraic Thom construction on $x$) moreover determines a homotopy class of cycles in $C(f')\otimes C(f')$. Our task in this proof is to identify the obstruction to taking the homotopy class determined by $\delta\psi/\psi$ in $C(f')\otimes C(f')$ and lifting it to $C(f'\otimes f')$ along the map $\theta$. A choice of cycle in the lifted homotopy class would determine the $x'$ in the statement of the proposition.

To analyse the obstruction, we claim there is a homotopy pullback square\[ \xymatrix{C(f'\otimes f')\ar[r]^-{\lmat \ev_r\\ \ev_l\rmat}\ar[d]_-{\theta}&(D^{n+2-*}\otimes C(f'))\oplus (C(f')\otimes D^{n+2-*})\ar[d]^-{\left(\lmat\delta\phi_0\\ \pm\phi_0 f^*\rmat\otimes\id\right)\oplus\left(\id\otimes\lmat\delta\phi_0\\ \pm\phi_0 f^*\rmat\right)}\\C(f')\otimes C(f')\ar[r]^-{\lmat 1\\ T\rmat}&(C(f')\otimes C(f'))\oplus(C(f')\otimes C(f'))} \]The derivation of this square, and proof that it is a homotopy pullback, can be found in Crowley-L\"{u}ck-Macko \cite[Diagram (11.138) in the proof of Proposition 11.137]{tibor}, where the existence of such a square is proved for a general chain map in $h\B(R)$, which we take to be $f'$ (the reader is also referred to \cite[Diagram (11.99)]{tibor} and the subsequent discussion, from which many of the details of that proof are drawn). The cofibre of $\theta$ is given up to homotopy equivalence by the cone of the right hand column of the square, which is homotopy equivalent to $(\Sigma C'\otimes C(f))\oplus(C(f)\otimes \Sigma C')$. Hence, following $\delta\psi/\psi$ along the bottom row of the square, and into the cofibre of the right hand column, we obtain the obstructions described in the proposition.
\end{proof}

Using Proposition \ref{prop:surgery} we have been able to invert the skew-suspension map on double $L$--groups in the following algebraic setting.

\begin{theorem}[Surgery above and below the middle dimension]\label{thm:aboveandbelow}When $R$ has homological dimension 0 and $n\geq 0$, the skew-suspension defines isomorphisms:\[\begin{array}{rrcl}\overline{S}\co &\widehat{DL}_n(R,\eps)&\xrightarrow{\cong} &\widehat{DL}_{n+2}(R,-\eps),\\
\overline{S}\co &DL^n(A,\eps)&\xrightarrow{\cong} &DL^{n+2}(A,-\eps).\end{array}\]
\end{theorem}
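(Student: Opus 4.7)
The plan is to prove surjectivity of $\overline{S}$, since injectivity is already established by Proposition \ref{prop:skewsusp}. Given an $(n+2)$-dimensional $(-\eps)$-symmetric (resp.\ $\eps$-ultraquadratic) Poincar\'{e} complex $(D,\phi)$, the goal is to construct an $n$-dimensional complex $(C,\psi)$ with $[(D,\phi)] = [\overline{S}(C,\psi)]$ in the relevant double $L$-group.

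The key input is that the hypothesis of homological dimension $0$ makes every module projective, so every short exact sequence splits and every complex in $\Ch(A)$ is chain-homotopy equivalent to its homology $H_*(D)$ (viewed as a complex with zero differential). Combined with the Poincar\'{e} duality isomorphisms $H_r(D) \cong H_{n+2-r}(D)^*$, this reduces the situation to a direct sum decomposition of $D$ up to homotopy: a middle piece concentrated in degrees $1, \ldots, n+1$, which is literally of the form $\Sigma C$ carrying a suspended symmetric structure, and an off-middle piece consisting of $H_0(D)$ in degree $0$ paired with its dual in degree $n+2$. My plan is to perform algebraic surgery on $(D,\phi)$ to kill $H_0(D)$, which simultaneously kills $H_{n+2}(D)$ by duality, thereby reducing $(D,\phi)$ up to double-cobordism to the middle piece, which is precisely $\overline{S}(C,\psi)$ with Poincar\'{e} structure transported across the skew-suspension chain equivalence $\overline{S}\co \Sigma^2 W_\eps^\%C\xrightarrow{\simeq} W^\%_{-\eps}\Sigma C$. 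Since projectivity means every section and splitting required for the surgery move can be chosen at the chain level, the surgery itself is unobstructed.

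The main obstacle I expect is to verify that each surgery step preserves the \emph{double}-cobordism class rather than merely the cobordism class; the surgery trace provides only one of the two required nullcobordisms, and a complementary partner must be constructed. In the symmetric case I would exploit the central half-unit $s$ to produce a twisted version of the surgery trace, following the template of the complementary pair $((h\,\,1))$ and $((h\overline{s}\,\,-s))$ from the proof of Lemma \ref{lem:welldef2}: the surgery data together with multiplication by $s$ and $\overline{s}$ should yield two cobordisms whose sum $\lmat f_+\\f_-\rmat$ is a chain equivalence, by the same module-level calculation as in that lemma. For the ultraquadratic case, Proposition \ref{prop:surgery} gives additional obstructions in the form of the maps $e'\circ(\delta\psi/\psi)_0$ and $e'\circ T_\eps(\delta\psi/\psi)_0$ from $C(f)^{n+2-*}$ to $\Sigma C'$. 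Since each surgery step operates strictly below degree $1$ or above degree $n+1$, the intermediate complex $C'$ is supported only in a degree range where the obstruction maps land trivially, so both obstructions vanish for degree reasons. Hence the ultraquadratic surgery also proceeds unobstructed, and surjectivity follows in both settings.
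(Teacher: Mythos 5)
Your overall skeleton (use homological dimension $0$ to replace the complex by its homology with zero differentials, split off a middle piece in degrees $1,\dots,n+1$, kill the extreme homology by algebraic surgery, and quote Proposition \ref{prop:skewsusp} for injectivity) agrees with the paper's, and your ``degree reasons'' claim for the vanishing of the ultraquadratic surgery obstructions of Proposition \ref{prop:surgery} is essentially what the paper checks by explicit matrix inspection. The genuine gap is in how you manufacture the \emph{two complementary} cobordisms. You perform a single surgery (killing $H_0$, hence also $H_{n+2}$ in the \emph{effect}) and then obtain the second cobordism by twisting the same surgery trace by $s$ and $\overline{s}$, following Lemma \ref{lem:welldef2}. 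That template does not apply here: in Lemma \ref{lem:welldef2} both cobordisms have target a complex homotopy equivalent to one of the boundary pieces, whereas a surgery trace $\tilde{D}$ is not equivalent to the effect and, crucially, it kills exactly one of the two extreme homology groups of $D$ --- say $H_0(\tilde{D})=0$ while $H_{n+2}(\tilde{D})\cong H_{n+2}(D)$, or vice versa depending on the degree in which the data sits. If both of your cobordisms have underlying complex (a unit or half-unit twist of) this same $\tilde{D}$, then $\lmat f_+\\ f_-\rmat\co D\oplus D'\to \tilde{D}\oplus\tilde{D}$ annihilates the killed extreme homology of $D$, so it is not injective on homology and the two cobordisms can never be complementary (unless that homology was already zero, in which case there was nothing to prove). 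No scaling by $s,\overline{s}$ can repair this, because complementarity forces the two traces to absorb \emph{complementary} parts of $H_0(D)\oplus H_{n+2}(D)$.

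This is exactly the point of ``surgery above and below the middle dimension'' in the paper: one performs \emph{two different} surgeries, with data concentrated in degree $n+2$ (whose trace kills $H_{n+2}$ and retains $H_0$) and in degree $0$ (whose trace kills $H_0$ and retains $H_{n+2}$); their effects coincide up to homotopy with the middle piece, and the Lemma \ref{lem:welldef}/\ref{lem:welldef2}-style modification (by $e=\psi\phi_0^{-1}$ in the ultraquadratic case, by $s,\overline{s}$ in the symmetric case) is only needed to make the assembled map an equivalence in the middle degrees, where \emph{both} traces retain the homology of $D$. Note also that for $\widehat{DL}_n(R,\eps)$ no half-unit is assumed, so the only mechanism you offer for producing a second, complementary cobordism is not even available there; the ultraquadratic case must instead use $e=\psi\phi_0^{-1}$, for which your proposal gives no substitute. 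As written, therefore, the surjectivity argument does not go through; it needs the two-surgery structure.
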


\begin{proof}We will prove only the ultraquadratic case as the proof for the symmetric case is very similar but much simpler as there are no surgery obstructions there.

Because $R$ has homological dimension 0, all f.g.\ $R$--modules are objects of the f.g.\ \emph{projective} $R$--module category $\A(R)$. So for any morphism $f:P\to Q$ in $\A(R)$ there is a splitting $P\cong\im(f)\oplus \ker(f)$. It follows that any chain complex $(C_*,d)$ over $\A(R)$ is \emph{split} in the sense of Weibel \cite[\textsection 1.4.1]{MR1269324}, and hence there is a chain homotopy equivalence $(C_*,d)\simeq (H_*(C),0)$. So without loss of generality, we will assume an $(n+2)$--dimensional $(-\eps)$--ultraquadratic Poincar\'{e} complex $(C,\psi)$ over $R$ is of the form\[0\to C_{n+2}\xrightarrow{0} C_{n+1}\xrightarrow{0}\dots\xrightarrow{0} C_1\xrightarrow{0} C_0\to 0.\] Note this means the cochain complex $C^{n-*}$ also has all differentials 0. Consequently, the chain homotopy equivalence $\phi_0\co C^{n+2-*}\xrightarrow{\simeq}C$ moreover defines an isomorphism of projective $R$--modules $\phi_0\co C^{n+2-r}\xrightarrow{\cong}C_r$ for each $r$.

Define complexes $D_\pm$ and morphisms $f_\pm\co C\to D_\pm$ by\[\xymatrix{D_-&&0\ar[r]&0\ar[r]&0\ar[r]&\dots\ar[r]&0\ar[r]&C_0\ar[r]&0\\
C\ar[d]^{f_+}\ar[u]_{f_-}&&0\ar[r]&C_{n+2}\ar[r]\ar[d]^{1}\ar[u]_0&C_{n+1}\ar[r]\ar[u]_0\ar[d]^0&\dots\ar[r]&C_1\ar[r]\ar[r]\ar[u]_0\ar[d]^0&C_0\ar[r]\ar[r]\ar[u]_1\ar[d]^0&0\\
D_+&&0\ar[r]&C_{n+2}\ar[r]&0\ar[r]&\dots\ar[r]&0\ar[r]&0\ar[r]&0.}\]The differential in $C(f_\pm\otimes f_\pm)_*$ is given by\[d_\pm=\left(\begin{matrix}d_{D\otimes D}&(-1)^{r-1}(f_\pm\otimes f_\pm)\\0&d_{C\otimes C}\end{matrix}\right)\co (D\otimes D)_r\oplus (C\otimes C)_{r-1}\to (D\otimes D)_{r-1}\oplus (C\otimes C)_{r-2},\]and hence the elements $(0,\psi)\in C(f_\pm\otimes f_\pm)_{n+3}$ are moreover cycles as\[(d_\pm(0,\psi))_0=((-1)^{r-1}f\psi_0f^*)\oplus(d\psi_0)\co D^{n+2-r}\oplus C^{n-1-r}\to D_r\oplus C_r,\]which vanishes for all $r$ by inspection. So define two $(n+3)$--dimensional $(-\eps)$--ultraquadratic pairs $x_\pm=(f_\pm\co C\to D_\pm, (0,\psi))$. We must now check by hand that the obstructions to ultraquadratic algebraic surgery obtained in Proposition \ref{prop:surgery} vanish for surgery on $(C,\psi)$ with data $x_+$ and $x_-$ respectively. The two surgery obstructions for $x_+$ are given by the chain homotopy class of the composition\[C(f_+)^{n+3-*}\xrightarrow{\left(\delta\psi/\psi\right)_0} C(f_+)\xrightarrow{\text{incl.}}C\left(\left(\begin{matrix}\delta\phi_0 \\ \pm\phi_0f_+^*\end{matrix}\right):D_+^{n+3-*}\to C(f_+)\right)=:\Sigma C_+',\]and by its transposed version (see Proposition \ref{prop:surgery}). It may be calculated, as in \cite[p.\ 46]{MR620795}, that\[\left(\delta\psi/\psi\right)_0=\left(\begin{matrix}\delta\psi_0&0\\ (-1)^{n+3-r}\psi_0f_+^*&0\end{matrix}\right):D_+^{n+3-r}\oplus C^{n+2-r}\to (D_+)_r\oplus C_{r-1},\]so in fact the only possible non-zero map occurs where $r=1$. But $\Sigma C'$ is given by the following length $n+3$ complex\[\begin{array}{rcl}\Sigma C_+'&=&\xymatrix{C_{n+2}\ar[r]^-{\lmat 1\\ 0\rmat}&C_{n+2}\oplus C_{n+1}\ar[r]^-{0}&\dots\ar[r]^-{0}&C_1\oplus C^{n+2}\ar[r]^-{\lmat 0\\ \phi_0\rmat}&C_0\ar[r]^-{0}&0}\\
&\simeq&\xymatrix{0\ar[r]^-{0}&C_{n+1}\ar[r]^-{0}&\dots\ar[r]^-{0}&C_1\ar[r]^-{0}&0\ar[r]^-{0}&0}\end{array}\](where the homotopy equivalence uses that $\phi_0$ is an isomorphism). After this homotopy equivalence, the $r=1$ map has codomain 0 and hence the first surgery obstruction vanishes homotopically. For the second surgery obstruction, note that the matrix for $T\left(\delta\psi/\psi\right)_0$ is given by transposing the matrix for $\left(\delta\psi/\psi\right)_0$ and applying the $\eps$--involution $T_\eps$ to each entry. Hence the only possible non-trivial morphism occurs now when $r=n+2$. But actually, inspection of the same chain homotopy representative of $\Sigma C_+'$ shows that this morphism also vanishes. Therefore there is no obstruction to the $x_+$ surgery. A very similar argument shows that the $x_-$ surgery is also unobstructed and we leave this check to the reader.

Both effects of surgery $(C'_\pm,\psi'_\pm)$ are given, up to homotopy, by \[(C'_\pm)_r=\left\{\begin{array}{lcl}C_r&\quad&1\leq r\leq n+1,\\0&&\text{otherwise,}\end{array}\right. \]with all differentials equal to 0 and \[(\psi'_+)_0=(\psi'_-)_0 =\left\{\begin{array}{lcl}\psi_0\co C^{n+2-r}\to C_r&&1\leq r\leq n+1,\\0&&\text{otherwise.}\end{array}\right. \]Hence there exists a homotopy equivalence $h\co (C_-',\psi_-')\xrightarrow{\simeq}(C_+',\psi_+')$. Now according to Ranicki \cite[Proposition 4.1 (ii)]{MR560997} we may write cobordisms \[((g_\pm\,\,g'_\pm)\co C\oplus C_\pm'\to \tilde{D}_\pm,(0,\psi\oplus-\psi'_\pm))\] whose underlying morphisms are given on the level of homology by \[
(g_+\,\,g'_+)_*=\left\{\begin{array}{ccl}(1\,\, 1)\co H_r(C)\oplus H_r(C)\to H_{r}(C)&\quad&1\leq r\leq n+1,\\ 1\co H_r(C)\to H_{r}(C)&&r=0\\0&&\text{otherwise}\end{array}\right.
\]\[
(g_-\,\,g'_-)_*=\left\{\begin{array}{ccl}(1\,\, 1)\co H_r(C)\oplus H_r(C)\to H_{r}(C)&\quad&1\leq r\leq n+1,\\ 1\co H_r(C)\to H_{r}(C)&&r=n+2\\0&&\text{otherwise}\end{array}\right.
\]We must modify these to be \emph{complementary} cobordisms. As in the proof of Lemma \ref{lem:welldef}, and as shown below, we may modify one of the cobordisms using $e:=\psi\phi_0^{-1}$ and one using the homotopy $h$. Modifying the morphisms by $e$ may result in the element $(0,\psi,\oplus-\psi'_-)$ no longer being a cycle. But as in the proof of Lemma \ref{lem:welldef}, $(1-e)g_-\psi g_-^*(1-e)^*-eg_-'\psi'_-(g_-')^*e^*$ is calculated to be nullhomotopic, and we denote a choice of nullhomotopy by $\Psi$. Define\[\begin{array}{lccrlr}((&g_+&g_+'h&)\co &C\oplus C'_-\to \tilde{D}_+,&(0,\psi\oplus-\psi'_-)),\\((&(1-e)g_-&-eg_-'&)\co &C\oplus C'_-\to \tilde{D}_-,&(\Psi,\psi\oplus-\psi'_-)).\end{array}\]Working exactly as in Lemma \ref{lem:welldef}, the morphism \[\left(\begin{matrix}g_+&g_+'h\\(1-e)g_-&-eg'_-\end{matrix}\right)\co C\oplus C_-'\to \tilde{D}_+\oplus \tilde{D}_-\]is seen to be an isomorphism on the level of homology, which is sufficient to show chain equivalence as we are working with bounded complexes. Hence we have shown that $(C,\psi)$ is double-cobordant to the skew-suspension of the $n$--dimensional $\eps$--ultraquadratic Poincar\'{e} complex given by $(\Sigma^{-1}C_-',\overline{S}^{-1}\psi_-')$. In other words, the skew suspension map $\overline{S}\co \widehat{DL}_n(R,\eps)\to \widehat{DL}_{n+2}(R,-\eps)$ is surjective and therefore an isomorphism as was required to be shown.
\end{proof}

\begin{corollary}\label{cor:aboveandbelow}Under the hypotheses of Theorem \ref{thm:aboveandbelow} and for $k\geq 0$ \[\begin{array}{ll}\widehat{DL}_{2k+1}(R,\eps)=0, &\widehat{DL}_{2k}(R,\eps)\cong\widehat{DL}_0(R,(-1)^k\eps),\\ DL^{2k+1}(A,\eps)=0,&DL^{2k}(A,\eps)\cong DL^0(A,(-1)^k\eps).\end{array} \]
\end{corollary}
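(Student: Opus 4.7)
The plan is to iterate the isomorphisms from Theorem \ref{thm:aboveandbelow}. Applying the skew-suspension $\overline{S}$ a total of $k$ times gives a composition of isomorphisms
\[\widehat{DL}_0(R,(-1)^k\eps)\xrightarrow{\overline{S}^k}\widehat{DL}_{2k}(R,\eps),\]
obtained by tracking the sign of $\eps$ through each step (it flips at each application), and analogously for the symmetric groups. This immediately yields the even-dimensional parts of the corollary. For the odd-dimensional vanishing, the same iteration reduces everything to proving the base cases $\widehat{DL}_1(R,\eps)=0$ and $DL^1(A,\eps)=0$, since then $\widehat{DL}_{2k+1}(R,\eps)\cong\widehat{DL}_1(R,(-1)^k\eps)=0$ and similarly in the symmetric setting.

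To establish these base cases, I would adapt the surgery construction from the proof of Theorem \ref{thm:aboveandbelow} to its edge case. Given a 1-dimensional $\eps$-ultraquadratic Poincar\'{e} complex $(C,\psi)$ over $R$, the homological dimension hypothesis allows one to replace $C$ by a homotopy equivalent complex with zero differentials, so $C$ is concentrated in degrees 0 and 1, with Poincar\'{e} duality furnishing isomorphisms $C_1\cong C^0$ and $C_0\cong C^1$ induced by $\psi_0$. Let $D_-$ be concentrated in degree 0 with $(D_-)_0=C_0$, let $D_+$ be concentrated in degree 1 with $(D_+)_1=C_1$, and let $f_\pm\co C\to D_\pm$ be the obvious projections. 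Then $\lmat f_+\\f_-\rmat\co C\to D_+\oplus D_-$ is literally the identity, so any cobordisms built on the data $(f_\pm,(0,\psi))$ are automatically complementary. The effect of surgery on $(C,\psi)$ along these pairs produces a complex $C_-'$ with support in the empty range $1\leq r\leq 0$, i.e.\ the zero complex, so $(C,\psi)$ is double-cobordant to the skew-suspension of zero, namely zero. The symmetric case is identical and indeed simpler, with no surgery obstructions to dispense with.

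The main obstacle is that Theorem \ref{thm:aboveandbelow} is stated only for $n\geq 0$, whereas the base-case vanishings require running the same surgery technique at the boundary of the stated range with $n=-1$; one must check that the construction of $D_\pm$, the verification that the resulting pairs are Poincar\'{e}, and the vanishing of the surgery obstructions of Proposition \ref{prop:surgery} all remain valid in this edge case. All remaining steps are routine iteration of the theorem.
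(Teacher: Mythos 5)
Your proposal is correct and is essentially the paper's own argument: iterate Theorem \ref{thm:aboveandbelow} $k$ times, and then kill the $1$--dimensional groups using the complexes $D_\pm$ concentrated in bottom and top degree. The ``main obstacle'' you flag is not really there: instead of rerunning the surgery machinery at the formal edge case $n=-1$, the paper simply observes that the pairs $(f_\pm\co C\to D_\pm,(0,\psi))$ are themselves complementary nullcobordisms -- Poincar\'{e}-ness is a direct check since $\phi_0$ is an isomorphism of the split complexes, and this is equivalent to your observation that the effect of surgery would be the zero complex -- so no surgery obstructions need to be revisited.
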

\begin{proof}In each case, apply the isomorphism of Theorem \ref{thm:aboveandbelow} $k$ times to obtain an isomorphism to either a 1--dimensional or 0--dimensional double $L$--group. For the even dimensional case, we are now done.

For any 1--dimensional $(-1)^k\eps$--ultraquadratic Poincar\'{e} complex $(C,\psi)$, the pairs $(f_\pm\co C\to D_\pm,(0,\psi))$ defined in the proof of Theorem \ref{thm:aboveandbelow} are complementary nullcobordisms, proving that $DL^1(R,(-1)^k\eps)=0$. The 1--dimensional $(-1)^k\eps$--symmetric case is entirely similar.
\end{proof}

\section{Double Witt groups and double $L$--groups}\label{sec:DWgroups}

In this section we use hyperbolic versions of the classical torsion Witt groups of algebraic number theory to cast the low-dimensional double $L$--groups in a form more amenable to calculation. Based on our results in \cite{OrsonA} and our surgery results of Section \ref{sec:DLgroups} we are able to completely calculate the double $L$--groups $\widehat{DL}_n(R,\eps)$ for $n\geq0$ when $R$ has homological dimension 0. Furthermore, we will derive some new algebraic results relating to Seifert forms and for linking forms using the techniques of double $L$--theory. These will be applied in Section \ref{sec:knots} to high-dimensional knot theory, in particular to give algebraic answers to Question \ref{q:stablyhypishyp}.

\subsection{Forms and linking forms}

The language we use for forms and linking forms is based on that found in Ranicki \cite[\textsection\textsection  1.6, 3.4]{MR620795} although we caution that our terminology, particularly later on regarding lagrangian submodules, differs slightly. Also, our use of the word `split' in reference to forms and linking forms is entirely different to Ranicki's use.

\begin{definition}An \emph{$\eps$--symmetric form over $A$} is a pair $(P,\theta)$ consisting of a f.g.\ projective $A$--module
 $P$ and an injective $A$--module morphism $\theta\co P\hookrightarrow P^*$ such that $\theta(x)(y)=\eps\overline{\theta(y)(x)}$ for all $x,y\in P$ (equivalently $\theta=\eps\theta^*$). A form $(P,\theta)$ is \emph{non-singular} if $\theta$ is an isomorphism. A form induces a sesquilinear pairing also called $\theta$\[\theta\co P\times P\to A;\qquad (x,y)\mapsto \theta(x,y):=\theta(x)(y).\]A morphism of $\eps$--symmetric forms $(P,\theta)\to (P',\theta')$ is an $A$--module morphism $f\co P\to P'$ such that $\theta(x)(y)=\theta'(f(x))(f(y))$ (equivalently $\theta=f^*\theta' f$), it is an isomorphism when $f$ is an $A$--module isomorphism. The set of isomorphism classes of $\eps$--symmetric forms over $A$, equipped with the addition $(P,\theta)+(P',\theta')=(P\oplus P',\theta\oplus \theta')$ forms a commutative monoid \[\NN^\eps(A)=\{\text{$\eps$--symmetric forms over $A$}\}.\]
\end{definition}

\begin{definition}\label{def:linking}Suppose $(A,S)$ defines a localisation. An \emph{$\eps$--symmetric linking form over $(A,S)$} is a pair $(T,\lambda)$ consisting of an object $T$ of $\H(A,S)$ and an injective $A$--module morphism $\lambda\co T\hookrightarrow T^\wedge$ such that $\lambda(x)(y)=\eps\overline{\lambda(y)(x)}$ for all $x,y\in T$ (equivalently $\lambda=\eps\lambda^\wedge$). A linking form $(T,\lambda)$ is \emph{non-singular} if $\lambda$ is an isomorphism. A linking form induces a sesquilinear pairing also called $\lambda$\[\lambda\co T\times T\to S^{-1}A/A;\qquad (x,y)\mapsto \lambda(x,y):=\lambda(x)(y).\]A morphism of $\eps$--symmetric linking forms $(T,\lambda)\to (T',\lambda')$ is an $A$--module morphism $f\co T\to T'$ such that $\lambda(x)(y)=\lambda'(f(x))(f(y))$ (equivalently $\lambda=f^\wedge\lambda' f$). $f$ is an isomorphism of forms when $f$ is an $A$--module isomorphism. The set of isomorphism classes of $\eps$--symmetric linking forms over $A$, equipped with the addition $(T,\lambda)+(T',\lambda')=(T\oplus T',\lambda\oplus \lambda')$ forms a commutative monoid \[\NN^\eps(A,S)=\{\text{$\eps$--symmetric linking forms over $(A,S)$}\}.\]
\end{definition}

\subsection{Double Witt groups}\label{sec:witt}

We will now define several ways in which an $\eps$--symmetric form or linking form can be considered trivial, that all involve the idea a lagrangian submodule.

\begin{definition}A \emph{lagrangian} for a non-singular $\eps$--symmetric form $(P,\theta)$ over $A$ is a submodule $j \co  L \hookrightarrow P$ in $\A(A)$ such that the sequence\[0\to L\xrightarrow{j}P\xrightarrow{j^*\theta} L^*\to0\]is exact. As modules in the category $\A(A)$ are projective, all surjective morphisms split, and a lagrangian is always a direct summand. If $(P,\theta)$ admits a lagrangian it is called \emph{metabolic}. If $(P,\theta)$ admits two lagrangians $j_\pm\co L_\pm\hookrightarrow P$ (labelled `$+$' and `$-$') such that they are complementary as submodules\[\left(\begin{matrix}j_+\\j_-\end{matrix}\right)\co L_+\oplus L_-\xrightarrow{\cong} P,\]then the form is called \emph{hyperbolic}.

If $(A,S)$ defines a localisation then the definitions of \emph{(split) lagrangian}, \emph{(split) metabolic} and \emph{hyperbolic} transfer immediately to the setting of non-singular $\eps$-symmetric linking forms, but now using the torsion dual $-^\wedge$ instead of the projective dual $-^*$.
\end{definition}

In general, not assuming the presence of a half-unit, for a symmetric \textbf{linking form} there is a hierarchy:\[\text{hyperbolic}\,\,\subsetneq\,\,\text{split metabolic}\,\,\subsetneq\,\,\text{metabolic}.\]And for a \textbf{form}\[\text{hyperbolic}\,\,\subsetneq\,\,\text{split metabolic}\,\,=\,\,\text{metabolic}.\]It is a standard result that the presence of a half-unit destroys the distinction between split metabolic and hyperbolic both for forms and for linking forms. More generally, if the form or linking form admits a quadratic extension, this distinction is destroyed.

\begin{definition}[Monoid construction]\label{def:monoid} Let $(M, +)$ be an commutative monoid and let $N$ be a submonoid of $M$. Consider the equivalence relation: for $m_1, m_2 \in M$, define $m_1 \sim m_2$ if there exists $n_1, n_2 \in N$ such that $m_1 + n_1 = m_2 + n_2$. Then the set of equivalence classes $M/\sim$ inherits a structure of abelian monoid via $[m] + [m'] := [m + m']$. It is denoted by $M/N$. Assume that for any element $m \in M$ there is an element $m' \in M$ such that $m + m' \in N$, then $M/N$ is an abelian group with $-[m] = [m']$.
\end{definition}

When the monoid construction returns a group, it is in general the group of \emph{stable isomorphism classes} in $M$, where one is allowed to `stabilise an isomorphism', on either side, by elements of the submonoid.

\begin{lemma}If $(P,\theta)$ is an $\eps$--symmetric form over $A$ then the form $(P\oplus P,\theta\oplus-\theta)$ is split metabolic. If moreover there exists a half-unit $s\in A$, then $(P\oplus P,\theta\oplus-\theta)$ is hyperbolic.
\end{lemma}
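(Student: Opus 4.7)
The plan is to exhibit explicit lagrangians. First I will show $(P \oplus P, \theta \oplus -\theta)$ admits the diagonal as a lagrangian, which gives split metabolicity; then, under the half-unit hypothesis, I will adjoin a second, complementary lagrangian to upgrade this to hyperbolicity.

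For the split metabolic part, the natural candidate is the diagonal $L_+ = \{(x,x) : x \in P\}$. It is a direct summand of $P\oplus P$ via the retraction $(x,y)\mapsto x$, and it is isotropic since the two copies of $\theta$ cancel on diagonal inputs. Exactness of $0 \to L_+ \to P \oplus P \to L_+^* \to 0$ then reduces to showing the map $(a,b)\mapsto \theta(a)-\theta(b)\in P^*$ is surjective with kernel precisely $L_+$, which follows at once from $\theta$ being an isomorphism. So $L_+$ is a lagrangian and $(P\oplus P, \theta\oplus -\theta)$ is (split) metabolic.

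To produce a second lagrangian $L_-$ complementary to $L_+$, the naive antidiagonal $\{(x,-x)\}$ fails because the sum $L_+ \oplus (\text{antidiagonal})\to P\oplus P$ is an isomorphism only when $2$ is invertible. The half-unit $s$ is precisely what substitutes for $\tfrac{1}{2}$: I would set
\[L_- := \{(sx,-\overline{s}x)\mid x\in P\} \subset P\oplus P.\]
Centrality of $s$ makes $L_-$ an $A$--submodule isomorphic to $P$ (with inverse isomorphism $(u,v)\mapsto u-v$, using $s+\overline{s}=1$), and isotropy collapses to $\overline{s}\theta(x,y)s - s\theta(x,y)\overline{s} = 0$ by centrality. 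Complementarity of $L_+$ and $L_-$ amounts to invertibility of the block matrix $\lmat 1 & s \\ 1 & -\overline{s}\rmat$, with explicit two-sided inverse $\lmat \overline{s} & s \\ 1 & -1\rmat$ (as one checks using $s + \overline{s} = 1$). Once $L_+\oplus L_- \xrightarrow{\cong} P\oplus P$ is established with both summands isotropic, the non-singular form restricted to this decomposition forces the pairing between $L_+$ and $L_-$ to be perfect, so both submodules are automatically full lagrangians; this then gives hyperbolicity.

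The only nontrivial insight in the argument is finding the formula for $L_-$. All remaining steps (direct-summand-hood, isotropy, exactness, invertibility of the change-of-basis matrix) are routine verifications; the work is confined to finding the precise twist by $s$ and $\overline{s}$ that replaces the missing factor of $\tfrac{1}{2}$.
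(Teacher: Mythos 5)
Your proof is correct and follows essentially the same route as the paper: the diagonal as the first (split) lagrangian, and a half-unit-twisted antidiagonal (the paper uses $\{(\overline{s}x,-sx)\}$, you use $\{(sx,-\overline{s}x)\}$, which is equivalent since $\overline{s}$ is also a half-unit) verified to be complementary via an explicit invertible $2\times 2$ matrix. The only cosmetic difference is that the paper checks the lagrangian conditions directly, while you deduce them from isotropy plus non-singularity of the decomposition, which is a standard and valid shortcut.
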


\begin{proof}The diagonal $\lmat 1\\1\rmat\co P\to P\oplus P$ is a lagrangian with splitting $(1\,\,0) \co P\oplus P\to P$.

If $s$ is a half-unit, then $\lmat \overline{s} \\ -s \rmat\co P\oplus P$ is a lagrangian, and it is complementary to the diagonal as \[\left(\begin{matrix}s & 1\\\bar{s} & -1\end{matrix}\right)\left(\begin{matrix}1&1\\\bar{s}& -s\end{matrix}\right)=\left(\begin{matrix}1&1\\\bar{s}& -s\end{matrix}\right)\left(\begin{matrix}s & 1\\\bar{s} & -1\end{matrix}\right)=I.\proved\]\end{proof}

The preceding lemma holds completely analogously for linking forms, justifying the following definitions:

\begin{definition}Suppose $(A,S)$ defines a localisation. The monoid constructions\[\begin{array}{rcl}
W^\eps(A)&=&\NN^\eps(A)/\{\text{metabolic forms}\}\\
W^\eps(A,S)&=&\NN^\eps(A,S)/\{\text{metabolic linking forms}\}
\end{array}\]are abelian groups called the \emph{$\eps$--symmetric Witt group} of $A$ and of $(A,S)$ respectively. The monoid construction\[\begin{array}{rcl}
DW^\eps(A,S)&=&\NN^\eps(A,S)/\{\text{hyperbolic linking forms}\}
\end{array}\] is an abelian group called the \emph{$\eps$--symmetric double Witt group} of $(A,S)$.
\end{definition}

\subsection{The 0--dimensional double $L$--groups as double Witt groups}

A 0--dimensional $\eps$--symmetric (Poincar\'{e}) complex $(C,\phi)$ over $A$ is the same thing as a (non-singular) $\eps$--symmetric pairing on $H^0(C)$. A 1--dimensional $\eps$--symmetric $S$--acyclic (Poincar\'{e}) complex $(C,\phi)$ has an interpretation as a torsion pairing on $H^1(C)$. Cobordant Poincar\'{e} complexes correspond to Witt equivalent non-singular forms and linking forms, and the low-dimensional $L$--groups $L^0(A,\eps)$ and $L^0(A,S,\eps)$ are well-known to be isomorphic to Witt groups of forms and of linking forms respectively (see Ranicki \cite{MR620795}).

The double Witt groups are to the low-dimensional double $L$--groups as the Witt groups are to the low-dimensional $L$--groups. In this Subsection we will prove the double Witt groups are indeed isomorphic to the 0--dimensional double $L$--groups (see Propositions \ref{prop:isogroups} and \ref{iso}). Unexpectedly, the proof of this is different to the proof of the corresponding fact in classical $L$--theory.

\subsection*{Ultraquadratic double $L$--groups and Seifert forms}

As discussed in Subsection \ref{sec:witt}, when there is a half-unit in the coefficient ring, there is no difference between hyperbolic and metabolic forms on projective modules. However, without assuming there is a half-unit, the 0--dimensional double $L$--groups $\widehat{DL}_0(R,\eps)$ have a good interpretation in terms of \emph{Seifert} forms.

\begin{definition}An \emph{$\eps$--symmetric Seifert form} $(K,\psi)$ over $R$ is a f.g.\ projective $R$--module $K$ and a morphism of $R$--modules $\psi\co K\to K^*$ such that $\psi+\eps\psi^*$ is an isomorphism (note that this makes $(K,\psi+\eps\psi^*)$ a non-singular $\eps$--symmetric form). For a Seifert form $(K,\psi)$ we define an endomorphism $e=(\psi+\eps\psi^*)^{-1}\psi$  and we note that this gives $1-e=\eps(\psi+\eps\psi^*)^{-1}\psi^*$). A morphism of Seifert forms $g\co (K,\psi)\to(K',\psi')$ is a morphism of $R$--modules $g\co K\to K'$ such that $g^*\psi'g=\psi$, it is an isomorphism if $g$ is an $R$--module isomorphism.
\end{definition}

The following is then easily checked from the definitions:

\begin{proposition}\label{prop:correspondenceproj}Sending $(C,\psi)\mapsto (H^0(C),\psi)$ defines a contravariant equivalence of categories that preserves the monoid structure:

\[\begin{array}{rcl}\left\{  \begin{array}{c}\text{0--dimensional,}\\ \text{$\eps$--ultraquadratic,} \\ \text{Poincar\'{e} complexes over $R$}\end{array} \right\}_{\text{\big{/htpy.}}}&\longleftrightarrow &
\left\{  \begin{array}{c}\text{non-singular $\eps$--symmetric} \\ \text{Seifert forms over $R$}\end{array} \right\}_{\text{\big{/iso}}}.\end{array}\]
\end{proposition}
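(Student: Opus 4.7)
The plan is to construct the functor $\Phi\colon (C,\psi) \mapsto (H^0(C), \psi_*)$ explicitly via the slant map, then verify well-definedness, contravariance, essential surjectivity, and fullness/faithfulness, together with monoid compatibility.

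First, I would unpack a 0-dimensional $\eps$-ultraquadratic Poincar\'e complex. The 0-cycle $\psi \in (C\otimes C)_0$ corresponds under the slant map to a chain map $\psi_0\colon C^{-*} \to C$, whose symmetrisation $\phi_0 = \psi_0 + \eps\psi_0^*$ is a chain homotopy equivalence by the Poincar\'e condition. Since $C$ lies in $h\B_+(R)$ while $C^{-*}$ is supported in non-positive degrees, comparing supports via $\phi_0$ forces $H_r(C) = 0$ for all $r \neq 0$. Setting $K := H^0(C)$, one has $K = H_0(C)^*$ from the universal coefficient theorem, while the Poincar\'e equivalence induces a canonical isomorphism $H_0(C) \cong H_0(C^{-*}) = K$, so $H_0(C) \cong K^*$ canonically via Poincar\'e duality. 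The chain map $\psi_0$ descends on $H_0$ to an $R$-module map $\psi_*\colon K \to K^*$ whose symmetrisation $\psi_* + \eps\psi_*^*$ is an isomorphism---precisely a non-singular $\eps$-symmetric Seifert form.

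Next I would verify contravariance and monoid compatibility. A chain map $h\colon (C,\psi) \to (C',\psi')$ of ultraquadratic complexes induces on 0-cohomology the dual map $H^0(h)\colon H^0(C') \to H^0(C)$ (contravariance). The slant formula $((h\otimes h)\psi)_0 = h \circ \psi_0 \circ h^*$ descends on cohomology to $H^0(h)^*\psi_* H^0(h) = \psi'_*$, which is the Seifert form morphism condition for $H^0(h)$. Direct sum compatibility follows from the natural splitting $H^0(C\oplus C') \cong H^0(C)\oplus H^0(C')$. For essential surjectivity, given a non-singular $\eps$-symmetric Seifert form $(K,\psi)$, I place $K$ in chain degree $0$ and view $\psi \in K\otimes K$ as a $0$-cycle; non-singularity of $\psi + \eps\psi^*$ is exactly the Poincar\'e condition, and $\Phi$ sends the resulting complex back to $(K,\psi)$ up to the canonical identifications above.

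The main obstacle is establishing that $\Phi$ is fully faithful, which reduces to showing every 0-dimensional $\eps$-ultraquadratic Poincar\'e complex is chain homotopy equivalent (as an ultraquadratic complex) to one concentrated in degree $0$. The nontrivial point is that $H_0(C)$ must be f.g.\ projective, so that its representative in degree 0 is an object of $h\B_+(R)$. This is the core homological content: the Poincar\'e equivalence $C \simeq C^{-*}$ yields both $\Ext^r(H_0(C), R) = H^r(C) = 0$ for all $r\geq 1$ and an isomorphism $H_0(C) \cong H_0(C)^*$; combined with the finite projective dimension inherited from $C$ being a bounded projective resolution, these conditions force $H_0(C)$ to be projective. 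Granted projectivity, the comparison theorem for projective resolutions produces a chain homotopy equivalence from $C$ to $H_0(C)$ in degree $0$, transporting the ultraquadratic structure, and the morphism description above then yields full faithfulness.
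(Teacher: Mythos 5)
Your argument is essentially correct, and it is worth noting that the paper itself gives no proof here: Proposition \ref{prop:correspondenceproj} is dismissed as ``easily checked from the definitions''. Since in the paper's conventions a $0$--dimensional complex is \emph{not} assumed concentrated in degree $0$ (only the structure cycle has degree $0$, and $C$ is merely a finite, homologically positive complex of f.g.\ projectives), the real content is exactly what you isolate: that $H^0(C)$ is f.g.\ projective and that $(C,\psi)$ folds up, within its homotopy class, to a complex concentrated in degree $0$ --- the ultraquadratic analogue of the folding behind Ranicki's Proposition \ref{prop:correspondence}. Your projectivity step is the crux and it is sound: after replacing $C$ (as one may, since ``positive'' here is a condition on homology) by a complex of f.g.\ projectives supported in degrees $\geq 0$, Poincar\'e duality gives $H_r(C)=0$ for $r\neq 0$ and $H^r(C)=\Ext^r_R(H_0(C),R)=0$ for $r\geq 1$, and a f.g.\ module of finite projective dimension with vanishing higher $\Ext$ into $R$ is indeed projective --- you assert this rather than prove it, but it follows by dimension shifting: if the projective dimension were $n\geq 1$ then $\Ext^1(K_{n-2},R)=0$ for the $(n-1)$st syzygy, and since the $n$th syzygy is a summand of a f.g.\ free module the defining extension splits, contradicting $\mathrm{pd}=n$. (The self-duality $H_0(C)\cong H_0(C)^*$ you invoke is not needed for this.) Two small points to tidy: the support comparison ``$C^{-*}$ lives in non-positive degrees'' requires that preliminary replacement of $C$, since homological positivity alone does not constrain the underlying modules; and in the essential-surjectivity step the degree-$0$ module should be $K^*$ rather than $K$ (exactly as in the proof of Proposition \ref{prop:metcxseif}), so that $H^0$ returns $K$ and the correspondence is contravariant on the nose --- with $C_0=K$ the image is a form on $K^*$, and identifying it with $(K,\psi)$ is not canonical since only $\psi+\eps\psi^*$, not $\psi$, is invertible. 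For full faithfulness you may also want to record the one-line observation that for complexes concentrated in degree $0$ the group $(C'\otimes C')_1$ vanishes, so homotopy equivalence of ultraquadratic complexes literally means $(h\otimes h)(\psi)=\psi'$, i.e.\ an isomorphism of Seifert forms. None of these affect the correctness of your overall route.
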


The action of $e$ on the underlying $R$--module $K$ of a Seifert form $(K,\psi)$ makes $K$ an $R[s]$--module where $s$ is a formal variable with action $s(x):=e(x)$ for $x\in K$. $R[s]$ is a ring with involution where we extend the involution from $R$ by $\overline{s}=1-s$. When we wish to remember the morphism by which $s$ acts, we will write $(K,e)$. The \emph{Seifert dual} of an $R[s]$--module $(K,e)$ is the $R[s]$--module $(K,e)^*=(K^*=\Hom_R(K,R),1-e^*)$. An $R[s]$--submodule of $K$ is an $R$--submodule $j\co L\hookrightarrow K$ such that $ej(L)\subset j(L)$. Such an $L$ inherits an $R[s]$--module structure in the obvious way.

\begin{definition}\label{def:DWseifert}A \emph{(split) lagrangian} for a $\eps$--symmetric Seifert form $(K,\psi)$ over $R$ is a $R[s]$--submodule $j \co  L \hookrightarrow K$ such that the sequence in the category of $R[s]$--modules and Seifert duals\[0\to (L,e)\xrightarrow{j}(K,e)\xrightarrow{j^*(\psi+\eps\psi^*)} (L,e)^*\to0\]is (split) exact, and $j^*\psi j=0$. If $(K,\psi)$ admits a (split) lagrangian it is called \emph{(split) metabolic}. If $(K,\psi)$ admits two lagrangians $j_\pm\co L_\pm\hookrightarrow K$ such that they are complementary as $R[s]$--submodules\[\left(\begin{matrix}j_+\\j_-\end{matrix}\right)\co L_+\oplus L_-\xrightarrow{\cong} K,\]then the Seifert form is called \emph{hyperbolic}. We denote the corresponding \emph{$\eps$--symmetric Witt and double Witt group of Seifert forms over $R$} respectively by\[\widehat{W}_\eps(R)\qquad\text{and}\qquad \widehat{DW}_\eps(R).\]
\end{definition}

Here is a precise characterisation of metabolic Seifert forms considered as chain complexes.

\begin{proposition}\label{prop:metcxseif}An $\eps$--symmetric Seifert form over $R$ admits a lagrangian if and only if the associated homotopy equivalence class of 0--dimensional $\eps$--ultraquadratic Poincar\'{e} complexes over $R$ contains $(C,\psi)$ such that there is a nullcobordism $(f\co C\to D,(\delta\psi,\psi))$ with $H^1(D)=0$.
\end{proposition}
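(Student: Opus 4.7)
The plan is to use Proposition \ref{prop:correspondenceproj} to fix a representative $(C,\psi)$ with $C$ concentrated in degree zero and $C_0 = K^*$, so that the chain map $\psi_0\co K \to K^*$ literally is the Seifert form and its symmetrisation $\theta := \psi + \eps\psi^*\co K \to K^*$ gives the Poincar\'{e} duality chain equivalence.

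For the forward direction, given an $R[s]$--lagrangian $j\co L \hookrightarrow K$, I will take $D$ concentrated in degree zero with $D_0 = L^*$ and nullcobordism morphism $f = j^*\co K^* \to L^*$. Since $(D\otimes D)_r = 0$ for $r \geq 1$, one is forced to set $\delta\psi = 0$, and the cycle condition for $(0,\psi) \in C(f\otimes f)_1$ then reduces under the slant map to $f\psi_0 f^* = j^*\psi j = 0$, which is precisely the Seifert vanishing condition. The Poincar\'{e} condition will reduce to checking that the induced map $K/j(L) \to L^*$ coming from $j^*\theta$ is the isomorphism supplied by the lagrangian short exact sequence, and $H^1(D) = 0$ holds trivially because $D$ is concentrated in degree zero.

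For the reverse direction, I first observe that $H^1(D) = 0$ forces $d_D\co D_1 \to D_0$ to be split injective (the modules being projective), so replacing $D$ by the chain equivalent $D_0/d_D(D_1)$ I may assume $D$ is also concentrated in degree zero and $\delta\psi = 0$. The cycle condition again becomes $f\psi_0 f^* = 0$, and Poincar\'{e} duality for the pair will give $f^*\co D_0^* \hookrightarrow K$ injective and identify $L := \im(f^*) \subset K$ as the kernel of the composition $K \xrightarrow{\theta} K^* \xrightarrow{f} D_0$. Three properties will then need to be verified to conclude $L$ is a Seifert lagrangian: exactness of $0 \to L \to K \xrightarrow{j^*\theta} L^* \to 0$, which follows from the identification $\ann(L) = \ker f$ (a short duality computation) combined with the Poincar\'{e} identification $\theta(L) = \ker f$; the Seifert vanishing $j^*\psi j = 0$, which is immediate from $f\psi_0 f^* = 0$ given that $L = \im(f^*)$; and the $R[s]$--stability $e(L) \subset L$ with $e = \theta^{-1}\psi$, which follows from $\psi_0(L) \subset \ker f = \theta(L)$.

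The main obstacle will be the $R[s]$--stability of $L$: this is a genuinely ultraquadratic phenomenon, with no analogue in purely symmetric $L$--theory, and will rely crucially on combining the cycle condition for $\psi$ itself (not merely for its symmetrisation $\theta$) with the Poincar\'{e} identification $\ker f = \theta(L)$. All remaining verifications will be direct manipulations of the slant map and of the splittings available in $\A(R)$.
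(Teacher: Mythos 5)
Your proposal is correct and follows essentially the same route as the paper: the forward direction is the paper's construction verbatim ($D=L^*$ concentrated in degree $0$, $f=j^*$, structure $(0,\psi)$, with the Poincar\'{e} condition being exactly the lagrangian exact sequence), and in the converse you carry out at a normalised chain level (with $D$ in degree $0$ and $\delta\psi=0$) precisely the identifications the paper makes on homology -- $L$ arises as the image of $f^*$, exactness of $0\to L\to K\to L^*\to 0$ is the Poincar\'{e} condition for the pair, and both $j^*\psi j=0$ and the $R[s]$--stability $e(L)\subset L$ come from the vanishing of $f\psi_0 f^*$. Two minor points to tidy in a write-up: the truncation of $D$ to degree $0$ is not justified by $H^1(D)=0$ alone (which says nothing about modules of $D$ in degrees $\geq 2$), so you should first use the Poincar\'{e} duality of the pair, $D\simeq C(f)^{1-*}$, to replace $D$ up to equivalence by a complex in degrees $0,1$ before splitting off $d_D\co D_1\to D_0$; and Definition \ref{def:DWseifert} also requires the lagrangian sequence to be a sequence of $R[s]$--modules with the Seifert-dual structure on $(L,e)^*$, which is the routine identity $j^*\theta e=j^*\psi=(1-e_L^*)\,j^*\theta$ that the paper likewise leaves to the reader.
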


\begin{proof}An $\eps$--symmetric Seifert form $(K,\psi)$ corresponds to the $\eps$--ultraquadratic chain complex $(C,\psi)$ where $C$ is concentrated in degree 0 with $C_0=K^*$.

A lagrangian $j\co L\hookrightarrow K$ for $(K,\psi)$ defines a morphism of chain complexes concentrated in degree 0 \[f=j^*\co C\to D,\qquad D_0:=L^*.\]The morphism $\psi\co K\to K^*$ determines a 0--cycle $\psi_0\in \Hom(C^{-*},C)_0\simeq(C\otimes C)_0$, which has $(f\otimes f)(\psi)=0\in (D\otimes D)_0$ as $j^*\psi j=0$. Hence, considering the Puppe sequence of $f\otimes f$, we may lift $\psi$ to a 1--dimensional $\eps$--ultraquadratic structure $(0,\psi)$. So there is a 1--dimensional $\eps$--ultraquadratic Poincar\'{e} pair $(f\co C\to D,(0,\psi))$, and clearly $H^1(D)=0$.

Conversely, suppose there is a 1--dimensional $\eps$--ultraquadratic Poincar\'{e} pair $(f\co C\to D,(\delta\psi,\psi))$ with $0=H^1(D)=H_0(D,C)$. Then there is a short exact sequence \[0\to H^0(D)\to D^0\to D^1\to 0,\] and $D^0, D^1$ are projective so we have that $H^0(D)$ is a f.g.\ projective $R$--module as well. Dualising the short exact sequence, we see that $0=H_1(D)=H^0(D,C)$ and $H^0(D)^*\cong H_0(D)$. It is now standard to check that we have a lagrangian of the non-singular $\eps$--symmetric form $(H^0(C),\psi+\eps\psi^*)$ over $R$, given by \begin{equation}\label{eq:lagrangian}\xymatrix{0\ar[r]& L=H^0(D)\ar[rr]^-{f^*}&& H^0(C)\ar[rr]^-{f(\psi+\eps\psi^*)}&& L^*\cong H_0(D)\ar[r] &0.}\end{equation}We wish to show this is moreover a lagrangian of the Seifert form $(H^0(C),\psi)$. We must first check that $L$ is an $R[s]$--submodule, in other words that when $x\in H^0(D)$ then $ef^*(x)\in\im(f^*)$. Using the Poincar\'{e} duality isomorphisms, this is equivalent to checking that $\psi f^*(x)\in\ker(f\co H_0(C)\to H_0(D))$. But $f\psi f^*$ vanishes on $H^*(D)$ as $(f\co C\to D, (\delta\psi,\psi))$ is Poincar\'{e}. So $e(L)\subset L$ and $j\co L\hookrightarrow K$ is an $R[s]$--module morphism. Moreover, the reader may check that the action of $s$ on the Seifert dual of $L$ commutes with the map $f(\psi+\eps\psi^*)\co H^0(C)\to H_0(D)$ of diagram \ref{eq:lagrangian}, so that this is a sequence of $R[s]$--modules. We finally note that the kernels and images in the sequence of $R$--modules are the same (setwise) when it is a sequence of $R[s]$--modules, so the sequence is still exact.
\end{proof}

Hence a precise characterisation of hyperbolic Seifert forms considered as chain complexes:

\begin{proposition}\label{prop:hypcxseif}An $\eps$--symmetric Seifert form over $R$ is hyperbolic if and only if the associated $\eps$--symmetric homotopy equivalence class of 0--dimensional $\eps$--ultraquadratic Seifert complexes over $R$ contains $(C,\psi)$ such that there are two complementary nullcobordisms $(f_\pm\co C\to D_\pm,(\delta\psi,\psi))$ with $H^1(D_\pm)=0$.
\end{proposition}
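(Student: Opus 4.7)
This proposition is the natural double-nullcobordism analogue of Proposition \ref{prop:metcxseif}, and the plan is to apply that correspondence to each lagrangian / nullcobordism separately and then check that the two complementarity conditions agree with each other under the duality $\Hom_R(-,R)$.

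For the forward direction, suppose $(K,\psi)$ is hyperbolic with complementary lagrangians $j_\pm\co L_\pm\hookrightarrow K$. Fix the canonical representative $(C,\psi)$ with $C$ concentrated in degree $0$ and $C_0=K^*$. Applying the construction from the proof of Proposition \ref{prop:metcxseif} to each lagrangian separately produces two $1$--dimensional $\eps$--ultraquadratic Poincar\'e nullcobordisms of $(C,\psi)$ of the form $(f_\pm\co C\to D_\pm,(0,\psi))$, where $D_\pm$ is concentrated in degree $0$ with $(D_\pm)_0=L_\pm^*$, $f_\pm=j_\pm^*$, and in particular $H^1(D_\pm)=0$. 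To verify these are complementary, observe that $\lmat f_+\\f_-\rmat\co C\to D_+\oplus D_-$ is concentrated in degree $0$, where (under the identifications $C_0=K^*$ and $(D_+\oplus D_-)_0=L_+^*\oplus L_-^*$) it is precisely the $R$--linear dual of the hyperbolic isomorphism $\lmat j_+\\j_-\rmat\co L_+\oplus L_-\xrightarrow{\cong}K$. Hence it is itself an isomorphism, and \textit{a fortiori} a chain homotopy equivalence.

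For the reverse direction, suppose $(C,\psi)$ admits complementary nullcobordisms $(f_\pm\co C\to D_\pm,(\delta\psi_\pm,\psi))$ with $H^1(D_\pm)=0$. By Proposition \ref{prop:metcxseif}, each $f_\pm$ individually produces a Seifert lagrangian $j_\pm\co L_\pm:=H^0(D_\pm)\hookrightarrow K:=H^0(C)$, induced by $H^0(f_\pm^*)$. Dualising the chain homotopy equivalence $\lmat f_+\\f_-\rmat\co C\xrightarrow{\simeq}D_+\oplus D_-$ via $\Hom_R(-,R)$ yields a homotopy equivalence of cochain complexes $D_+^*\oplus D_-^*\xrightarrow{\simeq}C^*$, and taking $H^0$ recovers precisely the $R$--module map $\lmat j_+\\j_-\rmat\co L_+\oplus L_-\xrightarrow{\cong}K$. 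Since each $L_\pm$ is already an $R[s]$--submodule of $K$ by Proposition \ref{prop:metcxseif}, the direct sum $L_+\oplus L_-$ carries the compatible $R[s]$--module structure and this isomorphism of $R$--modules automatically upgrades to an isomorphism of $R[s]$--modules, establishing hyperbolicity of the Seifert form.

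Both directions reduce to routine dualisation arguments on complexes concentrated in a single degree, so I do not anticipate any real obstacle. The one piece of bookkeeping that requires care is the identification, under $\Hom_R(-,R)$ at degree $0$, of $\lmat f_+\\f_-\rmat$ (whose target is a direct sum) with the dual of $\lmat j_+\\j_-\rmat$ (whose source is a direct sum); once this degree-$0$ identification is carried out the two complementarity conditions become manifestly equivalent.
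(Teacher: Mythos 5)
Your proof is correct and matches the route the paper intends: the proposition is stated there without proof as an immediate consequence of Proposition \ref{prop:metcxseif}, and your argument --- applying that proposition to each lagrangian/nullcobordism separately and transferring complementarity by dualising $\lmat f_+\\f_-\rmat$ in degree $0$ --- is exactly the dualisation argument the paper uses in the parallel situations (e.g.\ Proposition \ref{prop:welldeflagrang}(ii)). The one detail worth keeping explicit is the point you already flag: the $R$--module isomorphism $L_+\oplus L_-\cong K$ upgrades to an isomorphism of $R[s]$--modules because each $L_\pm$ is $e$--invariant, which is what the definition of a hyperbolic Seifert form requires.
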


We now finally obtain an alternative characterisation of the 0--dimensional double $L$--groups of ultraquadratic forms.

\begin{proposition}\label{prop:isogroups}There is an isomorphism of groups\[
\widehat{DW}_\eps(R)\xrightarrow{\cong} \widehat{DL}_0(R,\eps).\]
\end{proposition}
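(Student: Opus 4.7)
The approach is to leverage the monoid isomorphism of Proposition \ref{prop:correspondenceproj}, which sends a Seifert form $(K,\psi)$ to the 0--dimensional $\eps$--ultraquadratic Poincar\'{e} complex $(C,\psi)$ concentrated in degree $0$ with $C_0 = K^*$. Both $\widehat{DW}_\eps(R)$ and $\widehat{DL}_0(R,\eps)$ are obtained from these underlying monoids by the monoid construction of Definition \ref{def:monoid}, quotienting respectively by the submonoid of hyperbolic Seifert forms and the submonoid of double-nullcobordant Poincar\'{e} complexes. It therefore suffices to show that these submonoids are identified under the monoid isomorphism. One direction is immediate: if $(K,\psi)$ is hyperbolic then Proposition \ref{prop:hypcxseif} directly furnishes the associated complex with complementary Poincar\'{e} nullcobordisms satisfying $H^1(D_\pm)=0$, so in particular this complex is double-nullcobordant.

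For the reverse direction, suppose $(C,\psi)$ admits complementary Poincar\'{e} nullcobordisms $(f_\pm\co C \to D_\pm, (\delta_\pm\psi,\psi))$. The complementary condition gives a chain equivalence $\lmat f_+\\ f_-\rmat\co C \xrightarrow{\simeq} D_+ \oplus D_-$. Since $C$ has $C_0 = K^*$ a f.g.\ projective module and vanishes in all other degrees, the same holds homotopically for $D_+\oplus D_-$. In particular $H_r(D_\pm) = 0$ for $r\neq 0$, and $H_0(D_\pm)$ is a direct summand of $H_0(D_+\oplus D_-) \cong K^*$, hence f.g.\ projective. A standard fact then gives that the augmentation $D_\pm \to H_0(D_\pm)$ is a chain equivalence onto a complex concentrated in degree $0$. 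Transferring the ultraquadratic structure along this chain equivalence produces Poincar\'{e} complementary nullcobordisms $(\tilde f_\pm \co C \to \tilde D_\pm, (\tilde\delta_\pm\psi, \psi))$ with each $\tilde D_\pm$ concentrated in degree $0$, so that $H^1(\tilde D_\pm)=0$ trivially. The \emph{if} direction of Proposition \ref{prop:hypcxseif} now applies to conclude that the Seifert form $(K,\psi)$ is hyperbolic.

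Combining these two containments shows that the monoid isomorphism from Proposition \ref{prop:correspondenceproj} identifies the hyperbolic submonoid with the double-nullcobordant submonoid, inducing the claimed group isomorphism $\widehat{DW}_\eps(R)\xrightarrow{\cong}\widehat{DL}_0(R,\eps)$. The substantive step, and the one requiring most care, is the reduction of a general double-nullcobordism to one with $D_\pm$ concentrated in degree $0$: notably, the general ultraquadratic surgery of Proposition \ref{prop:surgery} is obstructed, but this obstruction is bypassed here entirely by the complementary condition, which forces the homological simplification of each $D_\pm$ for free.
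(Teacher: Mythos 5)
Your proposal is correct and takes essentially the same route as the paper: both use the correspondence of Proposition \ref{prop:correspondenceproj} together with Proposition \ref{prop:hypcxseif}, the key point for injectivity being that the complementary condition forces the nullcobordism pieces to be homologically trivial above degree $0$. The only difference is cosmetic: the paper reads $H^1(D_\pm)=0$ directly from the splitting $0=H^1(C)\cong H^1(D_+)\oplus H^1(D_-)$, so your replacement of $D_\pm$ by complexes concentrated in degree $0$ is a harmless but unnecessary extra step.
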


\begin{proof}The morphism is defined in Proposition \ref{prop:correspondenceproj}. It is well-defined and surjective by the Propositions \ref{prop:correspondenceproj}, \ref{prop:metcxseif} and \ref{prop:hypcxseif}. To show injectivity, suppose that if $(C,\psi)$ is a $\eps$--ultraquadratic, 0--dimensional Poincar\'{e} complex associated to the $\eps$--symmetric Seifert form $(K,\psi)$. If there exists a pair of complementary Seifert nullcobordisms $(f_\pm\co C\to D_\pm,(\delta_\pm\psi,\psi))$ then in particular $0=H^1(C)=H^1(D_+)\oplus H^1(D_-)$ so that $H^1(D_\pm)=0$. But then $(K,\psi)$ must be hyperbolic by Proposition \ref{prop:hypcxseif}.
\end{proof}

We are now in a position to give some sample calculations of double $L$--groups.

\begin{example}\label{ex:calculation}We may apply Corollary \ref{cor:aboveandbelow} and the calculations in the author's paper \cite[Example 4.9]{OrsonA} to obtain calculations of the double $L$--groups when $R$ is a field of characteristic not 2. These calculations are made in terms of involution invariant prime ideals $\p$ of the Laurent polynomial ring $R[z,z^{-1}]$ which do not augment to 0 under $z\mapsto 1$. For example, when $R=\C$, such $\p$ are generated by polynomials $z-a$ where $a\in S^1\sm\{1\}\subset\C$ and in \cite{OrsonA} we calculated:\[\widehat{DL}_n(\C,\eps)\cong\left\{\begin{array}{lcl}\bigoplus_{a\in S^1\sm\{1\}}\bigoplus_{l=1}^\infty\Z&&\text{$n$ even,}\\ 0&&\text{$n$ odd,}\end{array}\right.\]When $R=\R$, such $\p$ are generated by polynomials $z^2-2cos\theta+1$ with $0<\theta<\pi$. We calculated:\[\widehat{DL}_n(\R,\eps)\cong\left\{\begin{array}{lll}\bigoplus_{0<\theta<\pi}\bigoplus_{l=1}^\infty\Z&&\text{$n=2k$ and $(-1)^k\eps=1$,}\\
(\bigoplus_{l=1}^\infty\Z)\oplus(\bigoplus_{0<\theta<\pi}\bigoplus_{l=1}^\infty\Z)&&\text{$n=2k$ and $(-1)^k\eps=-1$,}\\
0&&\text{$n$ odd.}\end{array}\right.\]

\end{example}

A non-singular $\eps$--symmetric Seifert form $(K,\psi)$ over $R$ is called \emph{stably hyperbolic} if there exist hyperbolic $\eps$--symmetric Seifert forms $H,H'$ such that $(K,\psi)\oplus H\cong H'$. Note that \emph{a priori} the stably hyperbolic Seifert forms are precisely the representatives of the 0 class in the double Witt group of Seifert forms. However, we obtain the following (new) characterisation as a corollary of Proposition \ref{prop:isogroups}.

\begin{corollary}[`Stably hyperbolic $=$ hyperbolic']\label{cor:stablyhypishypseif}A non-singular $\eps$--symmetric Seifert form $(K,\psi)$ over $R$ is hyperbolic if and only if it is stably hyperbolic.
\end{corollary}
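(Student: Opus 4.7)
The reverse implication is immediate from the definitions, so the work is in showing that stably hyperbolic Seifert forms are genuinely hyperbolic. The plan is to read the result straight off the isomorphism $\widehat{DW}_\eps(R) \cong \widehat{DL}_0(R,\eps)$ of Proposition \ref{prop:isogroups}, unpacking the $\widehat{DL}$ side by hand.

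First I would observe that, by the monoid construction used to define $\widehat{DW}_\eps(R)$, to say $(K,\psi)$ is stably hyperbolic is precisely to say that the class $[(K,\psi)] = 0 \in \widehat{DW}_\eps(R)$. Under the isomorphism of Proposition \ref{prop:isogroups}, the associated $0$--dimensional $\eps$--ultraquadratic Poincar\'{e} complex $(C,\psi)$ (with $C$ concentrated in degree $0$ and $C_0 = K^*$, as in Proposition \ref{prop:correspondenceproj}) therefore vanishes in $\widehat{DL}_0(R,\eps)$. Spelling out what this means via Definition \ref{def:comp}, we obtain a pair of complementary nullcobordisms $(f_\pm \co C \to D_\pm, (\delta_\pm\psi,\psi))$.

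The key step is to verify that these nullcobordisms automatically meet the hypothesis of Proposition \ref{prop:hypcxseif}, namely that $H^1(D_\pm) = 0$. This drops out of the complementary condition: the chain map $\lmat f_+ \\ f_- \rmat \co C \to D_+ \oplus D_-$ is a chain homotopy equivalence, so it induces an isomorphism on homology. Since $C$ is concentrated in degree $0$ we have $H^1(C) = 0$, and hence
\[
H^1(D_+) \oplus H^1(D_-) \;\cong\; H^1(C) \;=\; 0,
\]
forcing $H^1(D_\pm) = 0$ on both sides. Proposition \ref{prop:hypcxseif} then produces two complementary $R[s]$--lagrangians for $(K,\psi)$, exhibiting it as hyperbolic in the sense of Definition \ref{def:DWseifert}.

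The genuinely substantive content has already been done in Propositions \ref{prop:correspondenceproj}--\ref{prop:isogroups}, so the main ``obstacle'' is really bookkeeping: making sure the translation between the stabilised algebraic statement (stably hyperbolic Seifert forms) and the chain-level statement (complementary nullcobordisms of $(C,\psi)$) uses the fact that $C$ sits in a single degree in an essential way, which is what lets the complementary condition collapse the higher cohomology of $D_\pm$ to zero and hence guarantees one stays in the regime handled by Proposition \ref{prop:hypcxseif}.
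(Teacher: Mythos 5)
Your proof is correct and follows essentially the same route as the paper: translate stable hyperbolicity into vanishing in $\widehat{DL}_0(R,\eps)$ via Proposition \ref{prop:isogroups}, and then use the complementary condition to force $H^1(D_\pm)=0$ so that Proposition \ref{prop:hypcxseif} applies. The only difference is that you spell out explicitly the $H^1(D_+)\oplus H^1(D_-)\cong H^1(C)=0$ step, which the paper delegates to the injectivity argument in the proof of Proposition \ref{prop:isogroups}.
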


\begin{proof}`Only if' is clear. Conversely, a stably hyperbolic $\eps$--symmetric Seifert form determines the 0 class in $\widehat{DW}_\eps(R)\cong \widehat{DL}_0(R,\eps)$. Therefore there is a double-nullcobordism of the corresponding 0--dimensional $\eps$--ultraquadratic complex. But by the proof of Proposition \ref{prop:isogroups}, these nullcobordisms correspond to complementary lagrangians.
\end{proof}

\subsection*{Double Witt groups of linking forms}

We now perform a very similar analysis for linking forms.

\begin{proposition}[{Ranicki \cite[3.4.1]{MR620795}}]\label{prop:correspondence}There is a contravariant equivalence of categories that preserves the monoid structure:
\[\begin{array}{rcl}\left\{  \begin{array}{c}\text{1--dimensional, $(-\eps)$--symmetric} \\ \text{$S$--acyclic (Poincar\'{e})}\\ \text{complexes over $A$}\end{array} \right\}_{\text{\big{/htpy.}}}&\longleftrightarrow &
\left\{  \begin{array}{c}\text{(non-singular)}\\ \text{$\eps$--symmetric linking} \\ \text{forms over $(A,S)$}\end{array} \right\}_{\text{\big{/iso}}} \end{array}\]which sends $(C,\phi)\mapsto (H^1(C),\lambda_\phi)$, where $\lambda_\phi([x],[y])=s^{-1}\phi_0(x)(z)$ for $x,y\in C^1$, $z\in C^0$ and $s\in S$ such that $d^*z=sy$.
\end{proposition}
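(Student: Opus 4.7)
My plan is to first establish the forward map $(C,\phi) \mapsto (H^1(C), \lambda_\phi)$ is a well-defined functor on homotopy equivalence classes, then to build an inverse using the one-step projective resolution of an object of $\H(A,S)$. The contravariance arises naturally because cohomology is contravariant in $C$, which matches the contravariance hidden in passing from $\phi_0 : C^{1-*} \to C$ to a pairing. Throughout, Lemma \ref{lem:ext1} will be indispensable: it identifies $T^\wedge$ with $\Ext^1_A(T,A)$ for any $S$-torsion $A$-module $T$ of homological dimension $1$.

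For the forward map, I would verify $\lambda_\phi$ is a well-defined $\eps$-symmetric linking form as follows. Independence of the choice of $(s,z)$ lifting $y$: if $(s',z')$ is another choice then $s'z - sz' \in \ker(d^*)$, and after pairing with $\phi_0(x)$ and normalizing by $ss'$ the difference lies in $A$ by exactness (using that $\ker(d^*)$ is $S$-torsion-free when $C^0$ is projective). Independence of the representative of $[y]$: if $y = d^*w$ then $s=1, z=w$ gives a value in $A$. Independence of the representative of $[x]$: if $x = d^*w$ then $\phi_0(d^*w)(z) = \pm d\phi_0(w)(z) = \pm \phi_0(w)(d^*z) = \pm s\phi_0(w)(y) \in sA$, using that $\phi_0$ is a chain map. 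The $(-\eps)$-symmetry $\phi_0 \simeq -\eps T\phi_0$ (witnessed by $\phi_1$) translates, after tracking the sign from dualisation in $S^{-1}A/A$, into the required $\eps$-symmetry of the linking form. Non-singularity when $\phi_0$ is a chain equivalence follows by taking the projective resolution $0 \to C^0 \xrightarrow{d^*} C^1 \to H^1(C) \to 0$ (which exists because Poincar\'e and $S$-acyclicity force $d^*$ to be injective), dualising to get $H^1(C)^\wedge \cong \Ext^1_A(H^1(C),A) \cong H_0(C)$, and observing that $\lambda_\phi$ factors through the composition $H^1(C) \xrightarrow{\phi_0} H_0(C) \cong H^1(C)^\wedge$, which is an isomorphism. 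A chain equivalence $(C,\phi) \simeq (C',\phi')$ induces the obvious isomorphism of linking forms by transporting representatives.

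For essential surjectivity and the inverse functor, given $(T,\lambda)$, pick a length-$1$ projective resolution $0 \to C^0 \xrightarrow{d^*} C^1 \xrightarrow{\pi} T \to 0$ in $\A(A)$ and set $C_i = (C^i)^*$ with differential $d = (d^*)^*$, giving a chain complex in $h\C_+(A,S)$ concentrated in degrees $0,1$. The isomorphism $T \xrightarrow{\cong} T^\wedge \cong \Ext^1_A(T,A) \cong H_0(C)$ coming from the resolution and $\lambda$ lifts, using projectivity of $C^1$, to a morphism $\phi_0 : C^1 \to C_0$; define $\phi_0: C^0 \to C_1$ by dualising so that $d\phi_0 = \phi_0 d^*$ holds on the nose. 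The higher components $\phi_s$ (in particular $\phi_1$) are then constructed by standard obstruction theory: the identity $\phi_0 + \eps (T\phi_0) = 0$ on homology lifts to a nullhomotopy on chains because $C$ is a length-$1$ complex of projectives (the relevant obstruction lives in $H^2$ of a complex concentrated in degrees $\le 1$, hence vanishes). Checking the resulting complex is Poincar\'e reduces to checking that $\phi_0$ induces the original isomorphism on (co)homology, which it does by construction. Finally, mutual inverse-ness up to homotopy/isomorphism is a bookkeeping exercise: the composite $(C,\phi) \mapsto (H^1(C),\lambda_\phi) \mapsto (C',\phi')$ produces a complex with the same $H^1$ and the same linking form, and a choice of splitting of the resolution relates $C$ and $C'$ by a chain equivalence compatible with $\phi,\phi'$.

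The main obstacle I anticipate is the construction of the higher symmetric structure in the inverse direction, namely producing $\phi_1$ (and verifying no higher obstructions survive) so that $(C,\phi)$ really defines an element of the $(-\eps)$-symmetric $Q$-group and not merely a chain-level pairing. This is where the low dimension of the complex is essential and must be leveraged carefully; the bookkeeping of signs relating the $(-\eps)$-symmetry of $\phi$ to the $\eps$-symmetry of $\lambda$ (including the sign conventions built into the slant map and the standard involution $T_{-\eps}$) is also easy to get wrong and will require careful checking. The monoid-preservation and functoriality on morphisms follow automatically from the constructions respecting direct sums and the naturality of the evaluation map $\ev : W^\%C \to C\otimes C$.
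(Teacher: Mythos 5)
The paper offers no proof of this proposition: it is quoted directly from Ranicki (\cite[3.4.1]{MR620795}), so there is no in-paper argument to compare against. Your sketch is essentially a reconstruction of Ranicki's resolution-based proof, and the forward direction is fine in outline: well-definedness of $\lambda_\phi$ (note $\ker(d^*)=H^0(C)$ is both $S$--torsion and a submodule of a projective, hence zero, which makes the lift of $sy$ essentially unique), $\eps$--symmetry via $\phi_1$, and the equivalence Poincar\'{e} $\Leftrightarrow$ non-singular via $H_0(C)\cong\Ext^1_A(H^1(C),A)\cong H^1(C)^\wedge$ from Lemma \ref{lem:ext1}.

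The genuine gap is in the inverse construction, at exactly the point you flagged. For $C$ concentrated in degrees $0$ and $1$ one has $(C\otimes C)_{1+s}=0$ for $s\geq 2$, so any $1$--dimensional $(-\eps)$--symmetric structure necessarily has $\phi_s=0$ for $s\geq 2$, and the cycle condition in $(W^\%C)_1$ at $s=2$ then forces a symmetry constraint on $\phi_1$ itself (of the form $\phi_1+T_{-\eps}\phi_1=0$, up to sign conventions). Your justification that ``the obstruction lives in $H^2$ of a complex concentrated in degrees $\leq 1$, hence vanishes'' only yields \emph{some} nullhomotopy of $\phi_0-T_{-\eps}\phi_0$, not one satisfying this extra condition, so as written the symmetric structure is not constructed. (Relatedly, taking the degree--$1$ component of $\phi_0$ to be the dual of the degree--$0$ component does not make $d\phi_0=\phi_0 d^*$ hold on the nose for an arbitrary lift $\alpha$ of the adjoint of $\lambda$: that equation is equivalent to self-duality of $\alpha d^*$, which holds only up to the correction term that is precisely the source of $\phi_1$; the correct move is to lift $\alpha d^*$ through the injective $d\co C_1\to C_0$.) Two standard repairs: (a) follow Ranicki and lift $\lambda$ to an $S^{-1}A$--valued sesquilinear pairing on $C^1$; the failure of this lift to be symmetric takes values in $A$ and supplies a $\phi_1$ with the required property, and this works over any ring; or (b) use the paper's standing half-unit assumption on $A$ together with Proposition \ref{prop:multiple}: it then suffices to produce a chain map $\phi_0\co C^{1-*}\to C$ inducing $\hat\lambda$ with $\phi_0-T_{-\eps}\phi_0$ a boundary, and the latter is automatic because $C^{1-*}$ and $C$ are length--$1$ projective complexes with homology concentrated in degree $0$, so two chain maps lifting the same map (here $\hat\lambda$, using the $\eps$--symmetry of $\lambda$) are chain homotopic by the fundamental lemma of homological algebra. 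With either repair the remainder of your outline goes through.
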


Here is a precise characterisation of metabolic forms considered as chain complexes:

\begin{proposition}[{Ranicki \cite[3.4.5(ii)]{MR620795}}]\label{prop:3.4.5}A non-singular, $\eps$--symmetric linking form over $(A,S)$ admits a lagrangian if and only if the associated $(-\eps)$--symmetric homotopy equivalence class of 1--dimensional $(-\eps)$--symmetric Poincar\'{e} complexes over $A$ contains $(C,\phi)$ such that there is an $S$--acyclic 2--dimensional $(-\eps)$--symmetric Poincar\'{e} pair $(f\co C\to D,(\delta\phi,\phi))$ with $H^2(D)=0$.
\end{proposition}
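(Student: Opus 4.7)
The plan is to mirror the proof of Proposition \ref{prop:metcxseif} in the setting of linking forms, substituting Proposition \ref{prop:correspondence} for Proposition \ref{prop:correspondenceproj} and using torsion duality $-^\wedge$ in place of $-^*$ throughout. Fix a 1--dimensional $(-\eps)$--symmetric $S$--acyclic Poincar\'{e} complex $(C,\phi)$ representing $(T,\lambda)$ via Proposition \ref{prop:correspondence}, so that $T = H^1(C)$ and $\lambda = \lambda_\phi$.

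For the forward direction, suppose we have a lagrangian $j\co L\hookrightarrow T$. Because $L$ is an object of $\H(A,S)$, it has a length--1 resolution by f.g.\ projective $A$--modules; I would use this to build a 2--term chain complex $D$ together with a chain map $f\co C\to D$ whose induced map $H^1(f)\co H^1(C)\to H^1(D)$ realises (up to torsion duality) the projection $T\twoheadrightarrow L^\wedge$ dual to $j$. The identity $j^\wedge\lambda j=0$, which is the linking--form version of the Seifert identity $j^*\psi j=0$, yields a nullhomotopy of $(f\otimes f)(\phi)\in D\otimes D$; lifting along the Puppe sequence gives a 2--dimensional $(-\eps)$--symmetric structure $(\delta\phi,\phi)$. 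By construction $D$ is concentrated in two consecutive degrees, so $H^2(D)=0$, and the exactness of $0\to L\to T\to L^\wedge\to 0$ translates exactly into the Poincar\'{e} condition for the pair.

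For the converse, suppose $(f\co C\to D,(\delta\phi,\phi))$ is a 2--dimensional $(-\eps)$--symmetric $S$--acyclic Poincar\'{e} pair with $H^2(D)=0$. Poincar\'{e}--Lefschetz duality $H^r(D)\cong H_{2-r}(D,C)$ together with $H^2(D)=0$ forces the vanishing $H_0(D,C)=0$, and the long exact sequence of the pair identifies a short exact sequence
\[0\to H^1(D)\to H^1(C)\to H^2(D,C)\to 0.\]
Set $L:=\operatorname{im}(H^1(D)\to H^1(C))$. I would verify that $L$ lies in $\H(A,S)$ (it is f.g., $S$--torsion, and inherits homological dimension 1 from the exactness above using that $D$ has $S$--torsion homology away from degree $1$), and then check that the Poincar\'{e} structure identifies $H^2(D,C)$ with $L^\wedge$ so that the displayed sequence is the defining sequence of a lagrangian. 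The vanishing of $\lambda$ on $L\times L$ is forced by the fact that $\phi$ bounds $\delta\phi$ on the image of $f$.

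The main obstacle is the bookkeeping in the forward direction: constructing $D$, $f$, and $(\delta\phi,\phi)$ at the level of chain complexes in such a way that all three of the conditions \emph{$S$--acyclic morphism}, \emph{Poincar\'{e} pair}, and \emph{$H^2(D)=0$} are satisfied simultaneously, while faithfully recovering the prescribed lagrangian $L$ on cohomology. This is more delicate than the Seifert case because one must work throughout in the subcategory $h\C_+(A,S)$ and track the torsion dual carefully under the slant map; in particular, the nullhomotopy used to define $\delta\phi$ must be chosen compatibly with the resolution of $L$ so that the resulting pair is Poincar\'{e} rather than merely symmetric.
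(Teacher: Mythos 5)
The paper itself offers no proof of this statement: it is quoted directly from Ranicki \cite[Proposition 3.4.5(ii)]{MR620795}, where the pair is constructed from a f.g.\ projective resolution of the lagrangian together with the chain-level dictionary between $1$--dimensional $S$--acyclic symmetric structures and linking forms, so the only comparison available is with Ranicki's argument. Your converse direction is essentially the argument the paper runs elsewhere (Propositions \ref{prop:metcxseif} and \ref{prop:welldeflagrang}(i)): $H^2(D)=0$ gives $H_0(D,C)=0$ by duality, and the image of $f^*\co H^1(D)\to H^1(C)$ is the lagrangian. Two caveats: the long exact sequence does not give injectivity of $H^1(D)\to H^1(C)$ (the preceding term $H^1(D,C)\cong H_1(D)$ need not vanish), which is harmless since only the image is used; but the claim that $L$ ``inherits homological dimension 1 from the exactness'' is an assertion, not a proof --- placing $L$ (equivalently $T/L\cong H^2(D,C)\cong H_0(D)$) in $\H(A,S)$ genuinely needs an argument, and this is part of what the resolution bookkeeping in Ranicki's proof supplies.

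The genuine gap is in the forward direction, at the step where you produce $(\delta\phi,\phi)$. By definition a $2$--dimensional $(-\eps)$--symmetric structure on $f\co C\to D$ requires $\delta\phi\in (W^\% D)_2$ with $d(\delta\phi)=f^\%(\phi)$, i.e.\ a nullhomotopy of $f^\%(\phi)$ in $W^\% D$, not merely a nullhomotopy of the evaluation $(f\otimes f)(\ev(\phi))$ in $D\otimes D$. A $1$--dimensional symmetric structure $\phi$ has a nontrivial higher component $\phi_1\in (C\otimes C)_2$ (the paper itself notes, in the proof of Proposition \ref{prop:welldeflagrang}, that $\phi_1$ is what yields the $(-1)^{k}\eps$--symmetry of the linking pairing), and if $D$ is a two--term complex in degrees $0,1$ then $(D\otimes D)_3=0$, so $\delta\phi_1=0$ is forced and the cycle condition demands, besides $d(\delta\phi_0)=\pm(f\otimes f)(\phi_0)$, that (up to signs) $(1\pm T_{-\eps})\delta\phi_0=(f\otimes f)(\phi_1)$ holds exactly in $(D\otimes D)_2$, with no room for a correcting homotopy. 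Thus $j^\wedge\lambda j=0$, which only controls $\phi_0$, does not by itself produce the pair, and you cannot transport the ultraquadratic argument of Proposition \ref{prop:metcxseif} via Proposition \ref{prop:multiple}: the Remark following that proposition warns precisely that the symmetric/ultraquadratic comparison fails for pairs. The Poincar\'e condition is likewise asserted rather than checked: it says $(\delta\phi_0\,\,f\phi_0)\co C(f)^{2-*}\to D$ is an equivalence, which ties the choice of $\delta\phi_0$ to the chosen resolution of $L$ and must be verified, not read off from exactness of $0\to L\to T\to L^\wedge\to 0$. (A smaller point: a chain map $f\co C\to D$ induces $f^*\co H^1(D)\to H^1(C)$, not a map $H^1(C)\to H^1(D)$; what you want to arrange is that $f^*$ is the inclusion of $L$, equivalently that $f_*\co H_0(C)\to H_0(D)\cong L^\wedge$ is the projection under Poincar\'e duality.) Solving simultaneously for $f$ and $\delta\phi_0$ with these properties is exactly the content of Ranicki's proof, and it is the part your outline defers.
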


Hence a precise characterisation of hyperbolic forms considered as chain complexes:

\begin{proposition}\label{hypcx}A non-singular, $\eps$--symmetric linking form over $(A,S)$ is hyperbolic if and only if the associated $(-\eps)$--symmetric homotopy equivalence class of 1--dimensional $(-\eps)$--symmetric Poincar\'{e} complexes over $A$ contains $(C,\phi)$ such that there are two complementary $S$--acyclic 2--dimensional $(-\eps)$--symmetric pairs $(f_\pm\co C\to D_\pm,(\delta_\pm\phi,\phi))$ with $H^2(D_\pm)=0$.
\end{proposition}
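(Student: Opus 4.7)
My plan is to prove this by parallel with Proposition \ref{prop:hypcxseif}, using Proposition \ref{prop:3.4.5} as the 1--dimensional analogue of Proposition \ref{prop:metcxseif}. The central bookkeeping ingredient is an explicit identification of the lagrangian produced by \ref{prop:3.4.5}: unpacking its proof (in the spirit of the diagram chase \eqref{eq:lagrangian} of Proposition \ref{prop:metcxseif}), a 2--dimensional $(-\eps)$--symmetric Poincar\'{e} pair $(f\co C\to D,(\delta\phi,\phi))$ with $H^2(D)=0$ yields the lagrangian $L\hookrightarrow T=H^1(C)$ as the image of the restriction map $H^1(D)\to H^1(C)$, which is injective because Poincar\'{e}--Lefschetz duality in the pair together with $H^2(D)=0$ forces $H^2(D,C)$ to be isomorphic to $H^1(D)$ via the connecting map. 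All the hard work of checking that this is a lagrangian is encapsulated in \ref{prop:3.4.5}; I just need the identification to relate complementarity of submodules to complementarity of cobordisms.

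For the forward direction, I would start with complementary lagrangians $j_\pm\co L_\pm\hookrightarrow T$ and apply Proposition \ref{prop:3.4.5} to each, producing $S$--acyclic 2--dimensional $(-\eps)$--symmetric Poincar\'{e} pairs $(f_\pm\co C_\pm\to D_\pm,(\delta_\pm\phi_\pm,\phi_\pm))$ with $H^2(D_\pm)=0$ whose sources $(C_\pm,\phi_\pm)$ are each homotopy equivalent to the Poincar\'{e} complex associated to $(T,\lambda)$. I would then replace both by a common representative $(C,\phi)$, precomposing each $f_\pm$ with the relevant homotopy equivalence; the obvious relative cylinder-glueing argument (the pair-version of Lemma \ref{lem:welldef2}) shows this adjustment does not affect the resulting class. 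Finally, to see $\binom{f_+}{f_-}\co C\to D_+\oplus D_-$ is a chain homotopy equivalence, I note both complexes lie in $h\C_+(A,S)$, so it suffices to check isomorphism on homology; on $H^1$ the map is precisely $T\to L_+\oplus L_-$ (via the identification above), which is an isomorphism by complementarity of the lagrangians, and on $H^0$, $H^2$ the claim follows from $S$--acyclicity together with Poincar\'{e}--Lefschetz duality for each pair (using $H^2(D_\pm)=0$).

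For the converse, given complementary nullcobordisms $(f_\pm\co C\to D_\pm,(\delta_\pm\phi,\phi))$ with $H^2(D_\pm)=0$, Proposition \ref{prop:3.4.5} hands us lagrangians $L_\pm\cong\im(H^1(D_\pm)\to H^1(C))$ of $(T,\lambda)$; the complementarity hypothesis then gives the isomorphism $H^1(C)\xrightarrow{\cong}H^1(D_+)\oplus H^1(D_-)$, which under the identification of lagrangians immediately reads $T=L_+\oplus L_-$ as submodules, so they are complementary. The main obstacle is the identification of $L_\pm$ as the image of the restriction on $H^1$: this needs to be extracted carefully from the proof of \ref{prop:3.4.5}, because complementarity as submodules of $T$ is a more refined datum than just complementarity of the abstract modules, and one must check naturality of the identification so that the chain-level isomorphism really does induce the submodule splitting in $T$. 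Everything else, including the common-source manoeuvre in the forward direction, is routine homotopy invariance.
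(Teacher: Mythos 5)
Your proposal is correct and follows essentially the route the paper intends: the paper states Proposition \ref{hypcx} without a separate proof, as the ``complementary'' analogue of Proposition \ref{prop:3.4.5}, and its own higher-dimensional versions of this argument (Propositions \ref{prop:welldeflagrang}(ii) and \ref{prop:welldeflagrang2}(ii)) identify the lagrangians exactly as you do, namely as the images of the restriction maps $H^1(D_\pm)\to H^1(C)$, with complementarity of the algebraic cobordisms translated into the direct sum decomposition of $H^1(C)$ by lagrangians. Your flagged point (naturality of that identification, and that the explicit construction from a given lagrangian in the proof of \ref{prop:3.4.5} has the homology needed for the complementarity check) is precisely the bookkeeping the paper leaves implicit, and it goes through as you describe.
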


\begin{proposition}\label{iso}Let $A$ be a ring with involution which contains a half-unit, then there is an isomorphism of groups\[DW^\eps(A,S)\xrightarrow{\cong} DL^0(A,S,\eps).\]
\end{proposition}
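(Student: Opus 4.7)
The strategy mirrors the proof of Proposition \ref{prop:isogroups}, replacing the Seifert-form correspondence of Proposition \ref{prop:correspondenceproj} by the linking-form correspondence of Proposition \ref{prop:correspondence}, and replacing Propositions \ref{prop:metcxseif} and \ref{prop:hypcxseif} by their linking-form analogues \ref{prop:3.4.5} and \ref{hypcx}. The role of the half-unit is simply to ensure that the target group $DL^0(A,S,\eps)$ is defined.

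First I would define the candidate map. Proposition \ref{prop:correspondence} provides a monoid-preserving bijection between isomorphism classes of non-singular $\eps$--symmetric linking forms over $(A,S)$ and homotopy equivalence classes of $1$--dimensional $(-\eps)$--symmetric $S$--acyclic Poincar\'{e} complexes over $A$. By Lemma \ref{lem:welldef2} homotopy equivalent such Poincar\'{e} complexes determine the same class in $DL^0(A,S,\eps)$, so this bijection descends to a monoid homomorphism $\Theta\co \NN^\eps(A,S)\to DL^0(A,S,\eps)$. To show that $\Theta$ descends further to $DW^\eps(A,S)$, it suffices to check that hyperbolic linking forms go to $0$, which is the content of Proposition \ref{hypcx}: a hyperbolic linking form corresponds to a complex equipped with two complementary $S$--acyclic $2$--dimensional $(-\eps)$--symmetric Poincar\'{e} pairs, and this is precisely a double-nullcobordism. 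Surjectivity of the resulting $\overline{\Theta}\co DW^\eps(A,S)\to DL^0(A,S,\eps)$ is then immediate, since every class in $DL^0(A,S,\eps)$ has a representative coming from some linking form.

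The main step is injectivity. Suppose $\overline{\Theta}(T,\lambda)=0$ and take $(C,\phi)$ to be a $1$--dimensional representative (so $H^r(C)=0$ for $r\ge 2$). By hypothesis there exist complementary $(A,S)$--nullcobordisms $(f_\pm\co C\to D_\pm, (\delta_\pm\phi,\phi))$. Complementarity of the chain equivalence $\lmat f_+\\f_-\rmat\co C\to D_+\oplus D_-$ forces $H^2(D_+)\oplus H^2(D_-)\cong H^2(C)=0$, hence $H^2(D_\pm)=0$. Proposition \ref{prop:3.4.5} then yields a lagrangian $L_\pm\subseteq T=H^1(C)$ for each pair individually. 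I would expect $L_\pm$ to be the image of $H^1(f_\pm)\co H^1(D_\pm)\to T$, so that the degree-$1$ part of the chain equivalence above becomes an isomorphism $L_+\oplus L_-\xrightarrow{\cong}T$ exhibiting $L_\pm$ as complementary lagrangians. This would show that $(T,\lambda)$ is hyperbolic, completing injectivity.

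The anticipated obstacle is bookkeeping in the last step: reading through Proposition \ref{prop:3.4.5} to pin down that the lagrangian really is $\im H^1(f_\pm)$ (not a subquotient or a twisted variant), and then confirming that complementarity of the two chain-level nullcobordisms translates verbatim to complementarity of submodules of $T$ with respect to the linking pairing $\lambda_\phi$ defined in Proposition \ref{prop:correspondence}. These verifications are essentially routine but require one to unravel the chain-level formula for $\lambda_\phi$ and to keep careful track of the identifications $H^2(D_\pm)=0$, $H^1(D_\pm)\hookrightarrow T$ that emerge from the Poincar\'{e} duality of the pairs.
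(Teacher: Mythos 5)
Your proposal is correct and follows essentially the same route as the paper: the map is defined via Proposition \ref{prop:correspondence}, well-definedness and surjectivity come from Propositions \ref{prop:3.4.5} and \ref{hypcx}, and injectivity follows because complementarity of the two nullcobordisms forces $H^2(D_\pm)=0$. The only difference is that the paper avoids your final bookkeeping worry by citing the `if' direction of Proposition \ref{hypcx} directly, which already packages the conclusion that the two lagrangians are complementary, rather than extracting individual lagrangians from Proposition \ref{prop:3.4.5} and checking complementarity by hand.
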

\begin{proof}The morphism is defined in Proposition \ref{prop:correspondence}. It is well-defined and surjective by the Propositions \ref{prop:correspondence}, \ref{prop:3.4.5} and \ref{hypcx}. To show injectivity, suppose that if $(C,\phi)$ is a $(-\eps)$--symmetric, 1--dimensional Poincar\'{e} complex associated to an $\eps$--symmetric linking form $(T,\lambda)$. If there exists a pair of complementary nullcobordisms $(f_\pm\co C\to D_\pm,(\delta_\pm\phi,\phi))$ then in particular $0=H^2(C)=H^2(D_+)\oplus H^2(D_-)$ so that $H^2(D_\pm)=0$. But then $(T,\lambda)$ must be hyperbolic by Proposition \ref{hypcx}.

\end{proof}

\begin{remark}When $A$ is a Dedekind domain, we show in \cite[Theorem 4.8]{OrsonA} how to calculate the double Witt group $DW^\eps(A,A\sm\{0\})$ in terms of the Witt groups of forms over the residue class fields $A/pA$ where $pA$ is an involution invariant prime ideal, indeed this is the calculation underlying Example \ref{ex:calculation}.
\end{remark}

\begin{definition}A non-singular $\eps$--symmetric linking form $(T,\lambda)$ over $(A,S)$ is called \emph{stably hyperbolic} if there exist hyperbolic $\eps$--symmetric linking forms $H,H'$ such that $(T,\lambda)\oplus H\cong H'$.
\end{definition}

\begin{corollary}[`Stably hyperbolic $=$ hyperbolic']\label{cor:stablyhypishyp}Let $A$ be a ring with involution which contains a half-unit. A non-singular $\eps$--symmetric linking form $(T,\lambda)$ over $(A,S)$ is hyperbolic if and only if it is stably hyperbolic.
\end{corollary}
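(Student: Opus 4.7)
The plan is to mimic exactly the argument used for Seifert forms in Corollary \ref{cor:stablyhypishypseif}, now transferring it through the linking form isomorphism of Proposition \ref{iso}.

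First, the direction ``hyperbolic implies stably hyperbolic'' is trivial by setting $H=H'=(T,\lambda)\oplus H''$ for any hyperbolic $H''$, or more directly since $(T,\lambda)$ itself is then already of the stabilised form. So I would dispatch this with one sentence.

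For the substantive direction, suppose $(T,\lambda)$ is stably hyperbolic, so there exist hyperbolic $\eps$--symmetric linking forms $H, H'$ with $(T,\lambda)\oplus H\cong H'$. Since hyperbolic linking forms represent $0$ in the double Witt group $DW^\eps(A,S)$ by definition, this relation shows $[(T,\lambda)]=0\in DW^\eps(A,S)$. Now invoke Proposition \ref{iso}: because $A$ contains a half-unit, the isomorphism $DW^\eps(A,S)\xrightarrow{\cong} DL^0(A,S,\eps)$ sends $(T,\lambda)$ to the class of the associated $1$--dimensional $(-\eps)$--symmetric $S$--acyclic Poincar\'e complex $(C,\phi)$ (under the correspondence of Proposition \ref{prop:correspondence}). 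Hence $[(C,\phi)]=0\in DL^0(A,S,\eps)$, meaning $(C,\phi)$ admits a double-nullcobordism: there exist complementary $S$--acyclic $2$--dimensional $(-\eps)$--symmetric Poincar\'e pairs $(f_\pm\co C\to D_\pm,(\delta_\pm\phi,\phi))$.

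The final step is to feed this double-nullcobordism into Proposition \ref{hypcx} to recover complementary lagrangians of $(T,\lambda)$. The only concern is whether the pairs produced by double-nullcobordism automatically satisfy the hypothesis $H^2(D_\pm)=0$ required by Proposition \ref{hypcx}. But this is precisely the same observation used in the proof of Proposition \ref{prop:isogroups} and Corollary \ref{cor:stablyhypishypseif}: complementarity forces
\[0=H^2(C)\cong H^2(D_+)\oplus H^2(D_-),\]
so $H^2(D_\pm)=0$ automatically, and Proposition \ref{hypcx} applies to produce the two complementary lagrangians witnessing hyperbolicity of $(T,\lambda)$.

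The main (minor) obstacle is bookkeeping: one must be sure that the class identified with $0$ on the Witt-theoretic side really does arise from the specific chain-level $(C,\phi)$ used in Proposition \ref{hypcx}, and that the complementarity condition on chain maps transfers to complementarity of lagrangians at the level of $H^1$. Both points are handled by the categorical equivalence of Proposition \ref{prop:correspondence} together with the cohomological vanishing above, exactly as in the Seifert case, so no new ideas are needed beyond those already developed in Section \ref{sec:DWgroups}.
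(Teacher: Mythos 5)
Your argument is correct and is essentially the paper's own proof: stably hyperbolic forces the zero class in $DW^\eps(A,S)\cong DL^0(A,S,\eps)$ (Proposition \ref{iso}), hence a double-nullcobordism of the associated $1$--dimensional complex, and complementarity gives $0=H^2(C)\cong H^2(D_+)\oplus H^2(D_-)$ so that Proposition \ref{hypcx} yields complementary lagrangians. No gaps; the bookkeeping point you raise is exactly what the proof of Proposition \ref{iso} already handles.
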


\begin{proof}`Only if' is clear. Conversely, a stably hyperbolic $\eps$--symmetric linking form determines the 0 class in $DW^\eps(A,S)\cong DL^0(A,S,\eps)$. Therefore there is a double-nullcobordism of the corresponding 1--dimensional $(-\eps)$--symmetric complex. But by the proof of Proposition \ref{iso}, these nullcobordisms correspond to complementary lagrangians.
\end{proof}

\begin{remark}For any $(A,S)$, Ranicki \cite[Proposition 3.4.7(ii)]{MR620795} proves an isomorphism $W^\eps(A,S)\cong L^0(A,S,\eps)$, but it is not sufficient to prove that stably metabolic implies metabolic for linking forms in general. The reason is that an $(A,S)$--nullcobordism $(f\co C\to D,(\delta\phi,\phi))$ of a 1--dimensional $(-\eps)$--symmetric $S$--acyclic Poincar\'{e} complex over $A$ might have $H^2(D)\neq0$, so that the corresponding $\eps$--symmetric linking form need not necessarily admit a lagrangian (cf.\ the torsion version of \cite[4.6]{MR560997}). Whereas in the hyperbolic case, we used the fact that $H^2(C)=0$ implies $H^2(D_\pm)=0$, exploiting the complementary condition present in our setup.
\end{remark}

\subsection{The linking and Blanchfield form of a symmetric Poincar\'{e} complex}

In topology, linking forms arise as the middle-dimensional torsion pairing on the homology (or cohomology) of a manifold. In this subsection we make clear, for a general chain complex with symmetric structure, when one should expect the middle-dimensional linking pairing to be a linking \emph{form}. The results in this subsection are required for our topological application to the Blanchfield forms of high-dimensional knot theory in Section \ref{sec:knots}.

Over a general $(A,S)$, the approach of simply taking the middle-dimensional torsion pairing of a symmetric Poincar\'{e} complex has two problems: the cohomology modules might not have homological dimension 1 (even when we restrict to the torsion), and the linking form might not pair modules to their torsion duals due to the universal coefficient problem. We now make clear some circumstances in which taking the middle-dimensional torsion pairing of a symmetric Poincar\'{e} complex $(C,\phi)$ is a valid operation from this perspective.

\begin{proposition}\label{prop:welldeflagrang}Suppose $(A,S)$ has the property that any $S$--torsion $A$--module has homological dimension 1 (this happens, for instance, if $A$ has homological dimension 1). Let $(C,\phi)$ be a $(2k+3)$--dimensional $\eps$--symmetric $S$--acyclic Poincar\'{e} complex over $A$. Then \[\lambda_\phi\co H^{k+2}(C)\times H^{k+2}(C)\to S^{-1}A/A;\qquad ([x],[y])\mapsto s^{-1}\overline{\tilde{y}(\phi_0(x))},\]with $x,y\in C^{k+2}$, $\tilde{y}\in C^{k+1}$, and $s\in S$ such that $d^*\tilde{y}=sy$, is a well-defined, non-singular, $(-1)^{k}\eps$--symmetric linking form. Moreover:\begin{enumerate}[(i)]
\item If $(C,\phi)$ is $(A,S)$--nullcobordant then $(H^{k+2}(C),\lambda_\phi)$ is metabolic.
\item If $(C,\phi)$ is $(A,S)$--double-nullcobordant then $(H^{k+2}(C),\lambda_\phi)$ is hyperbolic.
\end{enumerate}
\end{proposition}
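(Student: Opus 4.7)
The strategy has three natural parts, each extending a pattern already established in the paper (Propositions \ref{prop:correspondence}, \ref{prop:3.4.5}, and \ref{hypcx}) from the 1-dimensional setting to the middle dimension $k+2$ of a $(2k+3)$-dimensional complex.

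First I would verify the linking form is well-defined. Since $(C,\phi)$ is $S$-acyclic, $H^{k+2}(C)$ is $S$-torsion, and the hypothesis on $(A,S)$ then places it in $\H(A,S)$. Independence of $\lambda_\phi$ under the choices of cocycle representatives $x,y$, lift $\tilde y$, and scalar $s$ reduces to short direct checks using the cocycle conditions and the fact that $\phi_0$ is a chain map, exactly as in the 1-dimensional proof. Sesquilinearity is immediate. For the $(-1)^{k}\eps$-symmetry, the chain-level relation $\phi_0\simeq T_\eps\phi_0$ (imposed by $\phi$ being a cycle in $W^\%C$) induces a pairing with the claimed sign; the precise factor $(-1)^{k}\eps$ is the one that is consistent with $(k+1)$-fold skew-suspension starting from the 1-dimensional case of Proposition \ref{prop:correspondence}. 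Non-singularity uses the Poincar\'{e} property: the equivalence $\phi_0\co C^{2k+3-*}\simeq C$ descends to a cohomology isomorphism, which, combined with the universal coefficient identification $\Ext^1_A(T,A)\cong T^\wedge$ from Lemma \ref{lem:ext1}(ii), identifies the adjoint $H^{k+2}(C)\to H^{k+2}(C)^\wedge$ of $\lambda_\phi$ as an isomorphism.

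For part (i), given an $(A,S)$-nullcobordism $(f\co C\to D,(\delta\phi,\phi))$, I would use the long exact sequence of the pair $(D,C)$ together with the Poincar\'{e}--Lefschetz duality chain equivalence $\bigl(\begin{smallmatrix}\delta\phi_0\\\pm\phi_0 f^*\end{smallmatrix}\bigr)\co D^{2k+4-*}\to C(f)$ to build a lagrangian $L\subseteq H^{k+2}(C)$ from the image of $f^*\co H^{k+2}(D)\to H^{k+2}(C)$. Verifying the exact sequence $0\to L\to H^{k+2}(C)\to L^\wedge\to 0$ is a diagram chase following Ranicki's template for Proposition \ref{prop:3.4.5}, now applied in middle dimension. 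For part (ii), I would apply (i) to two complementary nullcobordisms $(f_\pm\co C\to D_\pm,(\delta_\pm\phi,\phi))$ to produce lagrangians $L_\pm$. The complementary hypothesis that $\bigl(\begin{smallmatrix}f_+\\f_-\end{smallmatrix}\bigr)\co C\to D_+\oplus D_-$ is a chain equivalence induces the cohomology isomorphism $(f_+^*,f_-^*)\co H^{k+2}(D_+)\oplus H^{k+2}(D_-)\xrightarrow{\cong}H^{k+2}(C)$, from which $L_++L_-=H^{k+2}(C)$ and $L_+\cap L_-=0$ follow by straightforward surjectivity/injectivity arguments, showing the lagrangians are complementary.

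I expect the main obstacle to be the middle-dimensional version of the lagrangian construction in part (i): assembling the long exact sequence and Poincar\'{e}--Lefschetz duality at the correct degrees and checking that the candidate submodule satisfies the full lagrangian short exact sequence condition, rather than merely being self-annihilating under $\lambda_\phi$. Everything else either reduces to a sign-check patterned on skew-suspension or is a direct transcription of an argument already in the literature.
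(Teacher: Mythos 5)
Your proposal is correct and follows essentially the same route as the paper: well-definedness and non-singularity via the universal coefficient identification of $\Ext^1$ with the torsion dual, symmetry via the chain-level homotopy between $\phi_0$ and $T_\eps\phi_0$ (the paper makes explicit that this calculation uses the higher homotopy $\phi_1$, citing Powell), the lagrangian in (i) taken to be the image of $f^*\co H^{k+2}(D)\to H^{k+2}(C)$ extracted from the long exact sequence of the pair combined with Poincar\'{e}--Lefschetz and torsion duality, and (ii) deduced by applying (i) to both nullcobordisms and using the complementary condition to split $H^{k+2}(C)$ as a direct sum of the two lagrangians. The commutative three-row diagram you anticipate as the ``main obstacle'' is exactly the content of the paper's proof of (i).
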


\begin{proof}The first part is standard. The linking form is easily checked to be well-defined and the non-singularity comes from a standard universal coefficient spectral sequence argument. The $(-1)^{k}\eps$--symmetry can be derived from a chain-level calculation (which in general requires the use of the higher chain homotopy $\phi_1$), see for instance the chain-level calculations in Powell \cite[p.\ 151]{Powell:2011kq}.

For (i), suppose $(g\co C\to D,(\delta\phi,\phi))$ is an $(A,S)$--nullcobordism of $(C,\phi)$. Write the functor $e^1(-)=\Ext^1_A(-,A)$ for brevity. Then the long exact sequences of the morphism $g\co C\to D$ determine a commutative diagram with exact rows \[\xymatrix{H^{k+2}(D;A)\ar[r]^-{g^*}&H^{k+2}(C;A)\ar[r]&H^{k+3}(D,C;A))\\
e^1(H_{k+1}(D;A))\ar[r]\ar[u]_-{\cong}&e^1(H_{k+1}(C;A))\ar[r]\ar[u]_-{\cong}&e^1(H_{k+2}(D,C;A))\ar[u]_-{\cong}\\ 
e^1(H^{k+3}(D,C;A))\ar[u]^-{(\delta\phi_0\,\,\pm f\phi_0)}_-{\cong}\ar[r]&e^1(H^{k+2}(C;A))\ar[u]^-{\phi_0}_-{\cong}\ar[r]^-{e^1(g^*)}&e^1(H^{k+2}(D;A))\ar[u]^-{\lmat \delta\phi_0\\ \phi_0 f^*\rmat}_-{\cong}}\]The map adjoint to the linking form is the downwards composition of the central column. Hence the inclusion of the image $j\co g^*(H^{k+2}(D;A))\hookrightarrow H^{k+2}(C;A)$ is a lagrangian submodule as the commutative diagram determines an exact sequence\[0\to g^*(H^{k+2}(D;A))\xrightarrow{j} H^{k+2}(C;A)\xrightarrow{j^\wedge \lambda_\phi}g^*(H^{k+2}(D;A))^\wedge \to 0.\]

For (ii), suppose $(f_\pm\co C\to D_\pm,(\delta_\pm\phi,\phi))$ is an $(A,S)$--double-nullcobordism of $(C,\phi)$.  By the above we have that the direct sum decomposition $(f_+^*\,\,\,\,f_-^*)\co H^{k+2}(D_+)\oplus H^{k+2}(D_+)\cong H^{k+2}(C)$ is by lagrangians.
\end{proof}

\subsection*{The special algebraic case of Blanchfield forms}

The types of linking forms that arise in classical knot theory, called Blanchfield forms, are non-singular $\eps$--symmetric linking forms over $(R[z,z^{-1}],P)$ where $P$ is the set of \emph{Alexander polynomials} \[P:=\left\{p(z)\in R[z,z^{-1}]\,|\,p(1)\in R\text{ is a unit}\right\}.\]

The ring $R[z,z^{-1}]$ does not contain a half-unit necessarily. However, according to Ranicki \cite[Proposition 10.21(iv)]{MR1713074}, if we formally adjoin the half-unit $(1-z)^{-1}$ to ring, then there is an equivalence of exact categories\begin{equation}\label{eq:excision}\H(R[z,z^{-1}],P)\xrightarrow{\cong} \H(R[z,z^{-1},(1-z)^{-1}],P).\end{equation}Under this equivalence, an object $T$ of $\H(R[z,z^{-1}],P)$ corresponds to a homological dimension 1, f.g.\ $R[z,z^{-1}]$--module $T$ such that $1-z\co T\to T$ is an isomorphism. 

\begin{proposition}\label{prop:cartmorph}The equivalence of Equation \ref{eq:excision} induces an equivalence of categories of the corresponding non-singular $\eps$--symmetric linking forms. Under the equivalence of Equation \ref{eq:excision}, (split) lagrangians correspond to (split) lagrangians.

For each $n>1$, the equivalence of Equation \ref{eq:excision} induces an equivalence of categories of the corresponding $P$--acyclic $n$--dimensional $\eps$--symmetric (Poincar\'{e}) complexes, and of $P$--acyclic $(n+1)$--dimensional $\eps$--symmetric (Poincar\'{e}) pairs.
\end{proposition}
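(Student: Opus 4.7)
The plan is to show everything reduces to the already-stated module-level equivalence of Ranicki, once we check that the torsion dual is preserved. Throughout write $\Lambda=R[z,z^{-1}]$ and $\Lambda'=\Lambda[(1-z)^{-1}]$.

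First I would establish the key technical input: for $T\in\H(\Lambda,P)$, there is a canonical isomorphism $T^\wedge_\Lambda\cong T^\wedge_{\Lambda'}$. The Alexander-polynomial condition $p(1)\in R^\times$ for $p\in P$ means that for any $x\in T$, writing $p(z)=p(1)+(z-1)q(z)$ gives $p(1)x=(1-z)q(z)x$, so $1-z$ acts surjectively, hence bijectively by finite generation, so $T$ has a natural $\Lambda'$-structure. Using Lemma~\ref{lem:ext1}, flatness of $\Lambda\to\Lambda'$, and the fact that $T^\wedge_\Lambda=\Ext^1_\Lambda(T,\Lambda)$ itself lies in $\H(\Lambda,P)$ (so $1-z$ acts invertibly on it), there is a chain
\[T^\wedge_\Lambda\cong\Ext^1_\Lambda(T,\Lambda)\cong\Ext^1_\Lambda(T,\Lambda)\otimes_\Lambda\Lambda'\cong\Ext^1_{\Lambda'}(T\otimes_\Lambda\Lambda',\Lambda')\cong\Ext^1_{\Lambda'}(T,\Lambda')\cong T^\wedge_{\Lambda'},\]
compatible with the evaluation pairings. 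With this in hand, a non-singular $\eps$--symmetric linking form $\lambda\co T\to T^\wedge$ transports verbatim between the two settings, and the defining short exact sequence $0\to L\to T\to L^\wedge\to 0$ characterising a (split) lagrangian is preserved because the ambient equivalence of abelian categories is exact and the dual identifications are natural.

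For the statements about complexes and pairs, the forward direction is the functor $-\otimes_\Lambda\Lambda'$. It is exact, sends $\B_+(\Lambda)$ to $\B_+(\Lambda')$, and on homology produces $H_*(C)\otimes_\Lambda\Lambda'=H_*(C)$ (the final equality coming from the $1-z$-invertibility on $P$-torsion modules), so $P$-acyclicity is preserved and the homology modules are unchanged. The natural map of $\Z[\Z/2]$--modules $W^\%C\to W^\%(C\otimes_\Lambda\Lambda')$ carries $\phi$ to a symmetric structure $\phi'$, and the Poincar\'{e} duality chain equivalence $\phi_0\co C^{n-*}\simeq C$ tensors to $\phi'_0\co (C\otimes\Lambda')^{n-*}\simeq C\otimes\Lambda'$. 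The same applies to pairs, using the analogous comparison of cones $C(f)\otimes\Lambda'\simeq C(f\otimes\Lambda')$.

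The genuine obstacle is the inverse construction: descending a $P$-acyclic $\eps$--symmetric Poincar\'{e} complex $(C',\phi')$ over $\Lambda'$ to one over $\Lambda$. The plan is to first use the resolution techniques of Ranicki's $L$-theory of localisations (the analogue of \cite[\S 3]{MR620795}) together with the dimension hypothesis $n>1$ to replace $(C',\phi')$ by a chain-equivalent representative whose chain modules themselves lie in $\H(\Lambda',P)$ (the extra dimension provides the room needed to simultaneously satisfy Poincar\'{e} duality and the $P$-torsion condition degreewise). Once in this form, apply the module-level equivalence $\H(\Lambda,P)\cong\H(\Lambda',P)$ termwise to obtain a $\Lambda$-complex, and use the preservation of torsion duals from Step 1 to transfer the cycle $\phi'\in W^\%C'$ back to a cycle $\phi\in W^\%C$ representing the same symmetric Poincar\'{e} data. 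The relative case for pairs follows by the same argument applied to both $C$, $D$ and the morphism $f\co C\to D$ together with the relative structure.
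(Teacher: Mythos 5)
Your module-level and linking-form arguments are essentially sound (and the forward direction on complexes is fine), but the heart of the proposition is the \emph{inverse} direction for complexes and pairs, and there your plan has a genuine gap. The step ``replace $(C',\phi')$ by a chain-equivalent representative whose chain modules themselves lie in $\H(\Lambda',P)$'' does not typecheck: objects of $\H(\Lambda',P)$ are $P$--torsion modules of homological dimension 1, not f.g.\ projectives, so such a complex is not an object of $h\B_+(\Lambda')$ and ``chain equivalent'' cannot mean equivalence within the category in which symmetric Poincar\'{e} complexes are defined. Even if one reads this as a resolution argument in the ambient module category (model $C'$ by a complex of torsion modules, apply the equivalence $\H(\Lambda,P)\simeq\H(\Lambda',P)$ termwise, then re-resolve by f.g.\ projectives over $\Lambda$), two essential points are left unaddressed: (a) you must prove that the resulting $\Lambda$--complex, tensored back up to $\Lambda'$, is chain equivalent to the original $C'$ -- this is exactly where the \emph{Cartesian} property of the square of localisations $\Lambda\to\Lambda'$, $P^{-1}\Lambda\to P^{-1}\Lambda'$ (equivalently $P^{-1}\Lambda/\Lambda\cong P^{-1}\Lambda'/\Lambda'$) is needed, and flatness alone does not give it; and (b) you must transport the symmetric structure $\phi'\in (W^\%C')_n$ (and, for pairs, the relative structure in $C(f^\%)$) through these replacements, which requires naturality statements for $W^\%$ across quasi-isomorphisms over two different rings that you have not supplied. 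The sentence attributing a role to $n>1$ (``the extra dimension provides the room'') is an assertion rather than an argument. In effect your ``resolution techniques of Ranicki's $L$--theory of localisations'' is an appeal to precisely the machinery the paper itself invokes: the paper's proof is a citation of Ranicki's general results on Cartesian morphisms of localisations (3.1.3, 3.6.2, 3.2.1 of \cite{MR620795}, see also \cite[\textsection 4]{MR2058802}), which carry out this descent for torsion modules, linking forms, and $S$--acyclic symmetric Poincar\'{e} complexes and pairs.

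Two smaller remarks on the parts you did argue. The claim that $1-z$ acts bijectively ``by finite generation'' after proving surjectivity implicitly uses commutativity (or a determinant trick); it is cleaner, and works for general $R$, to note that since $p(1)$ is a unit and $z$ is central, $p(z)$ and $1-z$ generate the unit ideal, so on any module annihilated by $p$ the central element $1-z$ acts invertibly. Your chain of identifications $T^\wedge_\Lambda\cong\Ext^1_\Lambda(T,\Lambda)\cong\Ext^1_{\Lambda'}(T,\Lambda')\cong T^\wedge_{\Lambda'}$ via Lemma \ref{lem:ext1} and flat base change is a reasonable way to see the compatibility of torsion duals, and the transfer of (split) lagrangians via exactness is fine; this part of your argument is a legitimate direct substitute for the first paragraph of the proposition.
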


\begin{proof}The equivalence of Equation \ref{eq:excision} comes from a special case of a general Cartesian morphism of localisations of rings with involution (see \cite[p.\ 201]{MR620795}). The proof of Proposition \ref{prop:cartmorph} for general Cartesian morphisms can be found in Ranicki \cite[3.1.3, 3.6.2, 3.2.1]{MR620795}. See also \cite[\textsection 4]{MR2058802}.
\end{proof}

Now set $\Gamma=\Z[z,z^{-1}]$ and suppose $T$ is an $\Gamma$--module such that $\Hom_\Gamma(T,\Gamma)=0$. Set $t(T)$ to be the \emph{$\Z$--torsion} \[t(T)=\ker(T\to \Q[z,z^{-1}]\otimes_\Gamma T)\qquad \text{and} \qquad f(T)=T/t(T).\]Note that $f(T)$ may still have torsion with respect to the multiplicative subset $\Z[z,z^{-1}]\sm\{0\}$. Set $P$ to be the set of Alexander polynomials. Levine \cite{MR0461518} shows that for any module $T$ in $\H(\Gamma,P)$ the $\Z$--torsion and $\Z$--torsion free components are picked out as follows\[\begin{array}{rcl}\Ext_\Gamma^2(T,\Gamma))&\cong& t(T),\\
\Ext_\Gamma^1(T,\Gamma))&\cong& f(T).\\
\end{array}\]Now suppose $(C,\phi)$ is an $n$--dimensional $\eps$--symmetric $P$--acyclic Poincar\'{e} complex over $\Gamma$. Then by the universal coefficient spectral sequence collapse detailed in \cite{MR0461518} we obtain an isomorphism\[\begin{array}{rcl}f(H^r(C;\Gamma))&\xrightarrow{\cong}& H^r(C;\Gamma)/\Ext^2_\Gamma(H_{r-2}(C;\Gamma),\Gamma)\\&\xrightarrow{\cong}&\Ext_\Gamma^1(H_{r-1}(C;\Gamma,\Gamma))\\ &\xrightarrow{\cong}& \Ext_\Gamma^1(H^{(n+1)-r}(C;\Gamma,\Gamma))\cong \Hom_\Gamma(f(H^{(n+1)-r}(C;\Gamma)),P^{-1}\Gamma/\Gamma).\end{array}\]This isomorphism is adjoint to the following pairing:

\begin{definition}[{Levine \cite{MR0461518}}]\label{def:levblanch}Let $(C,\phi)$ be an $n$--dimensional $\eps$--symmetric $P$--acyclic Poincar\'{e} complex over $\Gamma$. Then the \emph{Blanchfield pairing} is the pairing \[Bl\co f(H^r(C;\Gamma))\times f(H^{(n+1)-r}(C;\Gamma))\to P^{-1}\Gamma/\Gamma;\qquad (x,y)\mapsto p^{-1}\overline{\tilde{y}(\phi(x))},\]where $x\in C^r$, $y\in C^{(n+1)-r}$, $\tilde{y}\in C^{n-r}$ and $p\in P$ such that $d^*\tilde{y}=py$.
\end{definition}

\begin{proposition}\label{prop:welldeflagrang2}Let $(C,\phi)$ be a $(2k+3)$--dimensional $\eps$--symmetric $P$--acyclic Poincar\'{e} complex over $\Gamma$. Then the \emph{Blanchfield form}:\[\lambda_\phi\co f(H^{k+2}(C))\times f(H^{k+2}(C))\to P^{-1}\Z[z,z^{-1}]/\Z[z,z^{-1}];\qquad ([x],[y])\mapsto p^{-1}\overline{\tilde{y}(\phi_0(x))},\]with $x,y\in C^{k+2}$, $\tilde{y}\in C^{k+1}$, and $p\in P$ such that $d^*\tilde{y}=sy$, is a well-defined, non-singular, $(-1)^{k}\eps$--symmetric linking form. Moreover: \begin{enumerate}[(i)]
\item If $(C,\phi)$ is $(\Gamma,S)$--nullcobordant then $(f(H^{k+2}(C)),\lambda_\phi)$ is metabolic.
\item If $(C,\phi)$ is $(\Gamma,S)$--double-nullcobordant then $(f(H^{k+2}(C)),\lambda_\phi)$ is hyperbolic.
\end{enumerate}
\end{proposition}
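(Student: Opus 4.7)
My plan is to mimic the argument of Proposition \ref{prop:welldeflagrang} throughout, with the essential modification that because $\Gamma = \Z[z,z^{-1}]$ has homological dimension $2$, the cohomology modules $H^{k+2}(C)$ need not themselves lie in $\H(\Gamma,P)$. Levine's universal coefficient analysis recalled just above the statement replaces the clean functor $\Ext^1_A$ (which in the general case applied to the entire torsion cohomology) by the pair of identifications $\Ext^1_\Gamma(-,\Gamma) \cong f(-)$ and $\Ext^2_\Gamma(-,\Gamma) \cong t(-)$, so that $\Ext^1_\Gamma(-,\Gamma)$ exactly isolates the part on which the Blanchfield pairing is defined.

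For the first assertion, well-definedness of the pairing is a routine chain-level check; non-singularity is the statement adjoint to the isomorphism $f(H^{k+2}(C)) \xrightarrow{\cong} \Hom_\Gamma(f(H^{k+2}(C)), P^{-1}\Gamma/\Gamma)$ assembled from the spectral sequence collapse immediately preceding the statement; and $(-1)^k\eps$--symmetry comes from the same chain-level calculation used in Proposition \ref{prop:welldeflagrang}, involving (where needed) the higher chain homotopy $\phi_1$.

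For (i), given a nullcobordism $(g\co C\to D,(\delta\phi,\phi))$, I would write down the direct analogue of the central commutative diagram appearing in the proof of Proposition \ref{prop:welldeflagrang}, but with the functor $e^1(-)$ reinterpreted as $\Ext^1_\Gamma(-,\Gamma)$ and the vertical isomorphisms obtained by restricting the Poincar\'{e}--Lefschetz duality maps for the pair $(g,(\delta\phi,\phi))$ to their $\Ext^1_\Gamma$--strata. By Levine's identification the downward composition of the central column is precisely the adjoint of the Blanchfield form on $f(H^{k+2}(C))$, and exactness of the rows then exhibits $L := \im(f(g^*)) \hookrightarrow f(H^{k+2}(C))$ as a lagrangian of $\lambda_\phi$. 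For (ii), I would apply the same construction to each of the two nullcobordisms $(f_\pm\co C\to D_\pm,(\delta_\pm\phi,\phi))$ to obtain lagrangians $L_\pm := \im(f(f_\pm^*))$; the complementarity hypothesis gives $(f_+^*\,\,f_-^*)\co H^{k+2}(D_+)\oplus H^{k+2}(D_-) \xrightarrow{\cong} H^{k+2}(C)$, and applying the additive functor $f(-)$ yields $L_+ \oplus L_- \xrightarrow{\cong} f(H^{k+2}(C))$ as required.

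The main obstacle I anticipate is not the overall shape of the argument but the bookkeeping with the $\Z$--torsion versus $\Z$--torsion-free components. The Poincar\'{e} duality maps of the complex and of the pair are isomorphisms on the entire $P$--torsion cohomology, and to run the diagram chase on the $f(-)$ strata one needs that these duality isomorphisms descend to the $\Ext^1_\Gamma(-,\Gamma)$ components compatibly with naturality. This relies on the universal coefficient spectral sequence collapsing in a way compatible with the duality structure --- essentially the content of the discussion preceding the statement --- but writing out the commuting naturality squares at the $\Ext^1$--level (rather than at the level of the total torsion cohomology, which mixes the $\Ext^1$ and $\Ext^2$ contributions) is the key technical check that justifies the diagram chase.
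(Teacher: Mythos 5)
Your proposal is correct and follows essentially the same route as the paper: well-definedness and non-singularity via Levine's universal coefficient collapse, symmetry by the same chain-level calculation as in Proposition \ref{prop:welldeflagrang}, and for (i) and (ii) the same commutative diagram with the duality maps restricted to the $\Ext^1_\Gamma(-,\Gamma)\cong f(-)$ strata, exhibiting $\im(f(g^*))$ (resp.\ the two images from $f_\pm^*$) as (complementary) lagrangians. The ``key technical check'' you flag is exactly what the paper handles by invoking Levine's splitting of the spectral sequence for the $P$--acyclic complexes $C$, $D$ and $C(f)$.
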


\begin{proof}It is shown in \cite{MR0461518} that the Blanchfield form is well-defined and non-singular. As the chain-level formula is identical to that of the linking form in \ref{prop:welldeflagrang}, the $(-1)^{k}\eps$--symmetry follows from the same calculations as in that proof.

For (i), suppose $(g\co C\to D,(\delta\phi,\phi))$ is an $(\Gamma,S)$--nullcobordism of $(C,\phi)$. We must appeal to results of Levine (see also Letsche \cite[\textsection 2.1]{MR1735303}). It is shown in \cite{MR0461518} that for any chain complex $C$ over $\Z[z,z^{-1}]$, the universal coefficient spectral sequence collapses to determine short exact sequences \[0\to \Ext^2_\Gamma(H_{r-2}(C;\Gamma),\Gamma)\to H^r(C;\Gamma)\to \Ext^1_\Gamma(H_{r-1}(C;\Gamma),\Gamma)\to 0.\] Write $e^1(-)=\Ext^1_\Gamma(-,\Gamma)$ for brevity. As the chain complexes $C$, $D$ and $C(f)$ are all $P$--acyclic, we obtain the following commutative diagram with exact rows\[\xymatrix{f(H^{k+2}(D;\Gamma))\ar[r]\ar[d]^-{\cong}&f(H^{k+2}(C;\Gamma))\ar[r]\ar[d]^-{\cong}&f(H^{k+3}(D,C;\Gamma))\ar[d]^-{\cong}\\
e^1(f(H_{k+1}(D;\Gamma)))\ar[r]&e^1(f(H_{k+1}(C;\Gamma)))\ar[r]&e^1(f(H_{k+2}(D,C;\Gamma)))\\ 
e^1(f(H^{k+1}(D,C;\Gamma)))\ar[u]^-{(\delta\phi_0\,\,\pm f\phi_0)}_-{\cong}\ar[r]&e^1(f(H^{k+2}(C;\Gamma)))\ar[u]^-{\phi_0}_-{\cong}\ar[r]&e^1(f(H^{k+2}(D;\Gamma)))\ar[u]^-{\lmat \delta\phi_0\\ \phi_0 f^*\rmat}_-{\cong}}\]As in Proposition \ref{prop:welldeflagrang}, the image of \[g^*\co f(H^{k+2}(D;\Gamma))\to f(H^{k+2}(C;\Gamma))\]is a lagrangian submodule.

For (ii), suppose $(f_\pm\co C\to D_\pm,(\delta_\pm\phi,\phi))$ is an $(\Gamma,S)$--double-nullcobordism of $(C,\phi)$.  By the above we have that $(f_+^*\,\,\,\,f_-^*)\co H^{k+2}(D_+)\oplus H^{k+2}(D_+)\cong H^{k+2}(C)$ is now a direct sum decomposition by complementary lagrangians.
\end{proof}

Propositions \ref{prop:welldeflagrang} and \ref{prop:welldeflagrang2} have the following corollary, which is well known, but worth stating in this very general form:

\begin{corollary}Suppose for a ring with involution $R$ and localisation $(R,S)$ we have either one of:\begin{enumerate}[(i)]\item Every $S$--torsion $R$--module has homological dimension 1.
\item $(R,S)=(\Z[z,z^{-1}],P)$.
\end{enumerate}
Then if $(T,\lambda)$ is a non-singular, $\eps$--symmetric linking form over $(R,S)$ that is stably metabolic it is moreover metabolic.
\end{corollary}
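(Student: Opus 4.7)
My plan is to translate the question into chain complex language via Ranicki's equivalence of categories (Proposition \ref{prop:correspondence}) and then apply the lagrangian-producing Propositions \ref{prop:welldeflagrang} or \ref{prop:welldeflagrang2} to a nullcobordism obtained by the usual Witt-group gluing.

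Let $(C,\phi)$ denote the $1$-dimensional $(-\eps)$-symmetric $S$-acyclic Poincar\'e complex associated to $(T,\lambda)$ under Proposition \ref{prop:correspondence}, so that $(T,\lambda) = (H^1(C), \lambda_\phi)$. Stable metabolicity of $(T,\lambda)$ says precisely that its class vanishes in the Witt group $W^\eps(R,S)$; by Ranicki's isomorphism $W^\eps(R,S) \cong L^0(R,S,\eps)$ (referenced in the Remark below), the class $[(C,\phi)]$ also vanishes in $L^0(R,S,\eps)$. Equivalently, $(C,\phi)$ admits an $(R,S)$-nullcobordism $(g\co C \to D, (\delta\phi, \phi))$, with no a priori control on $H^2(D)$.

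I would then apply the appropriate proposition with $k = -1$, so that $2k+3 = 1$ matches the dimension of $(C,\phi)$. In case (i) Proposition \ref{prop:welldeflagrang} applies directly and returns a lagrangian of $(T,\lambda) = (H^1(C), \lambda_\phi)$. In case (ii) Proposition \ref{prop:welldeflagrang2} returns a lagrangian of the Blanchfield form on $f(H^1(C))$; here I would note that, because $T$ is an object of $\H(\Gamma,P)$, it has homological dimension $1$ as a $\Gamma$-module, whence Levine's identification $\Ext^2_\Gamma(T,\Gamma) \cong t(T)$ forces $t(T)=0$, so $T = f(H^1(C))$ and the Blanchfield lagrangian is a lagrangian for the original $(T,\lambda)$.

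The key conceptual point --- and the reason this does not contradict the failure of ``stably metabolic implies metabolic'' in general, discussed in the Remark below --- is that Propositions \ref{prop:welldeflagrang} and \ref{prop:welldeflagrang2} extract a lagrangian from \emph{any} nullcobordism, with no hypothesis on $H^2(D)$: the homological-dimension-$1$ input of (i) and Levine's universal coefficient collapse of (ii) supply exactly what is needed to identify $g^*(H^1(D))$ as a lagrangian via the short exact sequence $0 \to g^*(H^1(D)) \to H^1(C) \to g^*(H^1(D))^\wedge \to 0$ coming from the long exact sequences of the pair. I do not expect any genuine obstacles; the only care needed is in matching the ``$f(\cdot)$''-truncated setting of Proposition \ref{prop:welldeflagrang2} with the untruncated linking form on $T$, as just explained.
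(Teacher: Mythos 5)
Your proposal is correct and follows essentially the same route as the paper: translate $(T,\lambda)$ to a $1$--dimensional $(-\eps)$--symmetric $S$--acyclic Poincar\'e complex via Proposition \ref{prop:correspondence}, use the Witt group/$L$--group isomorphism to produce an $(R,S)$--nullcobordism with no control on $H^2(D)$, and then extract a lagrangian from Proposition \ref{prop:welldeflagrang} in case (i) or Proposition \ref{prop:welldeflagrang2} in case (ii). Your extra remarks (the dimension count $2k+3=1$ and the observation that $t(T)=0$ so $T=f(H^1(C))$ in case (ii)) are correct points of care that the paper leaves implicit, not a different argument.
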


\begin{proof}Under the correspondence of Proposition \ref{prop:correspondence}, $(T,\lambda)$ goes to a 1--dimensional $(-\eps)$--symmetric $S$--acyclic Poincar\'{e} complex $(C,\phi)$ over $A$. If $(T,\lambda)$ is stably metabolic, there exists an $(A,S)$--nullcobordism $(f\co C\to D,(\delta\phi,\phi))$ by the isomorphism $W^\eps(A,S)\cong L^2(A,S,-\eps)$ (\cite[Proposition 3.4.7(ii)]{MR620795}). But then by Proposition \ref{prop:welldeflagrang} (in the case of (i)) or by Proposition \ref{prop:welldeflagrang2} (in the case of (ii)), $(H^1(C),\lambda_\phi)=(T,\lambda)$ is metabolic.
\end{proof}

And the following corollary is clear from Propositions \ref{prop:welldeflagrang} and \ref{prop:welldeflagrang2}.

\begin{corollary}Suppose for a ring with involution $R$ and localisation $(R,S)$ we have either one of:
\begin{enumerate}[(i)]
\item Every $S$--torsion $R$--module has homological dimension 1 and $R$ contains a half-unit.
\item $(R,S)=(\Z[z,z^{-1},(1-z)^{-1}],P)$.
\end{enumerate}

Then Proposition \ref{prop:correspondence} defines a surjective homomorphism\[DL^{2k+2}(R,S,(-1)^{k+1}\eps)\twoheadrightarrow DW^\eps(R,S),\]with right inverse given by the isomorphism $DW^\eps(R,S)\cong DL^0(R,S,\eps)$ followed by the $(k+1)$--fold skew-suspension $\overline{S}^{k+1}$.
\end{corollary}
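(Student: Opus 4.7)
The approach is to construct the homomorphism $\rho$ claimed in the statement, verify well-definedness via Propositions \ref{prop:welldeflagrang} and \ref{prop:welldeflagrang2}, and exhibit the stated right inverse $\sigma$. In case (i), I define
\[
\rho\co DL^{2k+2}(R,S,(-1)^{k+1}\eps)\longrightarrow DW^\eps(R,S);\quad [(C,\phi)]\longmapsto [(H^{k+2}(C),\lambda_\phi)],
\]
and in case (ii) I replace $H^{k+2}(C)$ by its $\Z$--torsion-free quotient $f(H^{k+2}(C))$. The sign and symmetry conventions fit: by the shift built into the torsion $L$--group convention, a class in $DL^{2k+2}(R,S,(-1)^{k+1}\eps)$ is represented by a $(2k+3)$--dimensional $(-1)^k\eps$--symmetric Poincar\'e complex, and Proposition \ref{prop:welldeflagrang} (resp.\ \ref{prop:welldeflagrang2}) then produces a non-singular $(-1)^{k}\cdot(-1)^k\eps=\eps$--symmetric linking form. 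Well-definedness on double-cobordism classes is clause (ii) of the corresponding proposition, and the homomorphism property is automatic from the compatibility of cohomology and linking forms with direct sums.

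Next, I assemble the candidate right inverse as the composition
\[
\sigma\co DW^\eps(R,S)\xrightarrow[\cong]{\iota^{-1}}DL^0(R,S,\eps)\xrightarrow{\overline{S}^{k+1}}DL^{2k+2}(R,S,(-1)^{k+1}\eps),
\]
where $\iota$ is the isomorphism of Proposition \ref{iso} (available because both $R$ in case (i) and $R=\Lambda$ in case (ii) contain a half-unit) and $\overline{S}$ is the skew-suspension of Proposition \ref{prop:skewsusp}. A short sign check shows that iterating $\overline{S}\co DL^n(R,S,\eta)\hookrightarrow DL^{n+2}(R,S,-\eta)$ for $k+1$ steps starting from $\eta=\eps$ lands in $DL^{2k+2}(R,S,(-1)^{k+1}\eps)$, as required.

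The central verification is $\rho\circ\sigma=\id$. Starting with $(T,\lambda)$, the isomorphism $\iota^{-1}$ produces (via Proposition \ref{prop:correspondence}) the class of a $1$--dimensional $(-\eps)$--symmetric $S$--acyclic Poincar\'e complex $(C,\phi)$ concentrated in degrees $0,1$, with $H^1(C)\cong T$ and $\lambda_\phi=\lambda$. Skew-suspending yields $(\Sigma^{k+1}C,\overline{S}^{k+1}\phi)$, a complex concentrated in degrees $k+1,k+2$, with $H^{k+2}(\Sigma^{k+1}C)\cong H^1(C)\cong T$. Under the degree-shift identification $(\Sigma^{k+1}C)^{k+2-*}\cong C^{1-*}$, the Poincar\'e duality chain map $(\overline{S}^{k+1}\phi)_0$ matches $\phi_0$ (up to a controlled sign coming from the formula $\overline{S}(x\otimes y)=(-1)^{|x|}x\otimes y$), so substituting into the middle-dimensional formula of Proposition \ref{prop:welldeflagrang} or \ref{prop:welldeflagrang2} returns the original linking form $\lambda$ on $T$. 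Hence $\rho$ has $\sigma$ as a right inverse and is surjective.

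The main obstacle is the bookkeeping in case (ii): $\Lambda=\Z[z,z^{-1},(1-z)^{-1}]$ has global dimension $2$, so not every $P$--torsion $\Lambda$--module has homological dimension $1$, and Proposition \ref{prop:welldeflagrang} fails as stated for $H^{k+2}(C)$ in general. The remedy is to invoke Proposition \ref{prop:welldeflagrang2}, which carves out $f(H^{k+2}(C))$ as a valid object in the linking-form category, together with the categorical equivalence of Proposition \ref{prop:cartmorph} to move freely between $(\Gamma,P)$-- and $(\Lambda,P)$--linking forms. With this in place, and with careful tracking of the signs introduced across $k+1$ iterations of $\overline{S}$, the argument of case (i) transfers verbatim.
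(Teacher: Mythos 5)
Your construction is exactly the argument the paper has in mind (the paper offers no written proof beyond ``clear from Propositions \ref{prop:welldeflagrang} and \ref{prop:welldeflagrang2}''): define the map by taking the middle-dimensional (Blanchfield) linking form, use clause (ii) of those propositions for well-definedness on double-cobordism classes, and obtain surjectivity from the stated right inverse built from Proposition \ref{iso} and the $(k+1)$--fold skew-suspension, with Proposition \ref{prop:cartmorph} handling the passage between $(\Gamma,P)$ and $(\Lambda,P)$ in case (ii). The only point left implicit is the routine sign check that the suspended duality map reproduces $\lambda$ itself rather than $-\lambda$, which you correctly flag and which is consistent with the level of detail in the paper.
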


\section{Double $L$--groups obstruct double knot-cobordism}\label{sec:knots}

In Section \ref{sec:knots} we apply the new algebraic results of this paper to the setting of high-dimensional knot theory. We will prove several new results relating to doubly slice knots and reprove some known results using our techniques.

More specifically, for an $n$--dimensional knot, we will recall how to define a knot invariant which uses the entire chain complex of a knot exterior, called the \emph{Blanchfield complex}. We are expanding the details of a construction originally made by Ranicki \cite[\textsection 7.9]{MR620795}. We will prove that the class of the Blanchfield complex of a doubly slice knot vanishes in $DL^{n+1}(\Z[z,z^{-1},(1-z)^{-1}],P)$. For $n$ odd, this result will then be related to the Seifert and Blanchfield forms of the knot. Using the algebraic results in the earlier sections of this paper we show that for Blanchfield forms, Seifert forms and Blanchfield complexes that `algebraically stably doubly slice implies doubly slice'.

We note that while the original definition of the Blanchfield complex provided an elegant formulation for the slice problem, the use of the full chain complex of the knot exterior was unnecessary for Kervaire \cite{MR0189052} and Levine's \cite{MR0246314} solution to this problem. However, its use as an approach to the doubly slice problem is motivated by the results of Ruberman \cite{MR709569,MR933307}, where it is shown that there exist high-dimensional doubly slice invariants beyond the middle-dimensional pairings used by Kervaire and Levine. This suggests the use of something like the Blanchfield complex really is necessary to study doubly slice knots. On the other hand Ruberman's work also suggests that the fundamental group $\pi_1(S^{n+2}\sm K)$ plays a vital role in this problem, even high-dimensionally, so the Blanchfield complex over $\Z[\Z]$ which we will use below cannot be the full story. These concerns are discussed in our closing remarks.

\begin{notation}For the rest of the paper, we use the notation $\Lambda=\Z[z,z^{-1},(1-z)^{-1}]$.\end{notation}

\subsection{Basic high-dimensional knot theory}

A \emph{topological $n$--knot}, also called a \emph{knot} unless $n$ is to be specified, is an ambient isotopy class of oriented, locally flat embeddings $K\co S^n\hookrightarrow S^{n+2}$ (where all spheres are considered to have a preferred orientation already). In a standard abuse of notation we will also use the word knot to mean a particular $K$ in an ambient isotopy class and the image of $K$ in $S^{n+2}$. The \emph{unknot} is the ambient isotopy class of $U\co S^n\hookrightarrow S^{n+2}$, the standard unknotted $n$--sphere in the unit sphere $S^{n+2}\subset\R^{n+3}$ given by setting the last two co-ordinates to 0. The \emph{inverse knot} $-K$ of a knot $K$ is given by reversing the orientation on a mirror image of $K$ in $S^{n+2}$. Any embedding $K\co S^{n}\hookrightarrow S^{n+2}$ has trivial normal bundle and hence, by choosing a framing, we may excise a small, trivial tubular neighbourhood of the knot from $S^{n+2}$. Thus, the \emph{knot exterior} is the manifold with boundary \[(X_K,\partial X_K):=(\closure (S^{n+2}\sm (K(S^n)\times D^2)),S^n\times S^1)\]which has a preferred orientation coming from the ambient $S^{n+2}$. The knot exterior $X_K$ is homotopy equivalent to the \emph{knot complement} $S^{n+2}\sm K$ and hence has the homology of a circle $H_*(X_K)= H_*(S^1)$ by Alexander duality.

If there is a locally flat embedding of the manifold with boundary $(F^{n+1},S^n)\hookrightarrow S^{n+2}$ then we say the embedded $F$ is a \emph{Seifert surface} for the boundary knot. Every knot $K$ admits a Seifert surface $F^{n+1}$ (see Kervaire \cite{MR0189052} or Zeeman \cite{MR0160218}). For $n\neq 2$, the unknot is characterised as the only knot which admits $D^{n+1}$ as a Seifert surface.

It is always possible to `push' a Seifert surface into the standard $D^{n+3}$ that cobounds the ambient sphere $S^{n+2}$. That is, we may modify a locally flat embedding $(F^{n+1},S^{n})\hookrightarrow S^{n+2}$ to a locally flat embedding of pairs $(F^{n+1},S^{n})\hookrightarrow (D^{n+3},S^{n+2})$, without changing the ambient isotopy class of the bounding knot $K$, and so that the embedded $F$ intersects $S^{n+2}$ in the knot $K$. If there is a locally flat embedding of pairs $(D,K)\co (D^{n+1},S^n)\hookrightarrow (D^{n+3},S^{n+2})$ then we say the knot $K$ is \emph{slice} and the locally flat embedding $D$ is a \emph{slice disc} for $K$.

Any codimension 2 submanifold pair $(N,\partial N)\subset (D^{n+3},S^{n+2})$ has trivial normal bundle (see for instance Ranicki \cite[Proposition 22.1]{MR1713074}), and hence by choosing a  framing we may embed $(N,\partial N)\times D^2\subset (D^{n+3},S^{n+2})$. Define the \emph{exterior} of such a submanifold pair (with respect to a choice of framing) as the compact, oriented manifold triad\[(Y_N;X_{\partial N},\partial_+Y_N;\partial X_{\partial N}):=(\closure (D^{n+3}\sm (N\times D^2));\closure (S^{n+2}\sm (\partial N\times D^2)),N\times S^1;\partial N\times S^1).\]

\begin{figure}[h]\[\def\picextYone{\resizebox{0.4\textwidth}{!}{ \includegraphics{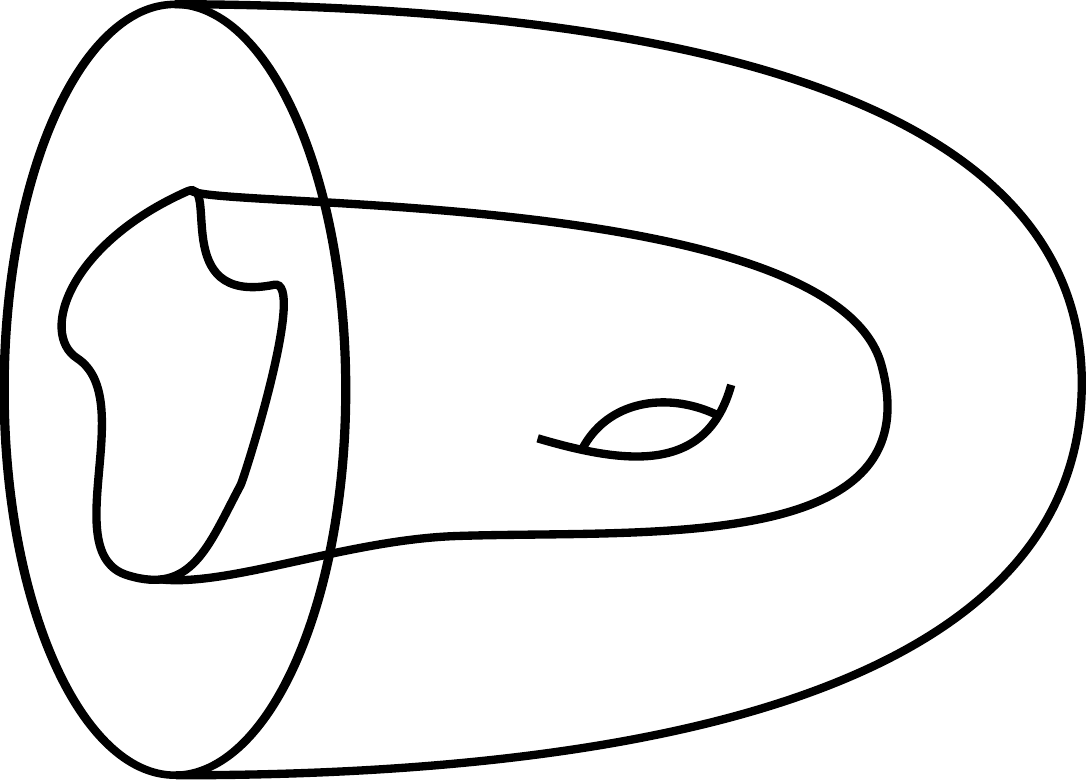}}}
\begin{xy} \xyimport(300,300){\picextYone}
,!+<7pc,-2.2pc>*+!\txt{Schematic of the framed \\codimension 2 submanifold pair.}
,(35,330)*!L{S^{n+2}}
,(190,320)*!L{D^{n+3}}
,(33,255)*!L{X_{\partial N}}
,(-70,240)*!L{\partial N\times S^1}
,(-40,225)*+{}="A";(30,200)*+{}="B"
,{"A"\ar"B"}
,(-70,70)*!L{\partial N\times D^2}
,(-30,80)*+{}="C";(55,110)*+{}="D"
,{"C"\ar"D"}
,(160,250)*!L{Y_N}
,(110,180)*!L{N\times D^2}
,(120,40)*!L{N\times S^1}
,(145,47)*+{}="A";(150,97)*+{}="B"
,{"A"\ar"B"}
\end{xy}
\qquad\quad
\def\picrelboundary{\resizebox{0.4\textwidth}{!}{ \includegraphics{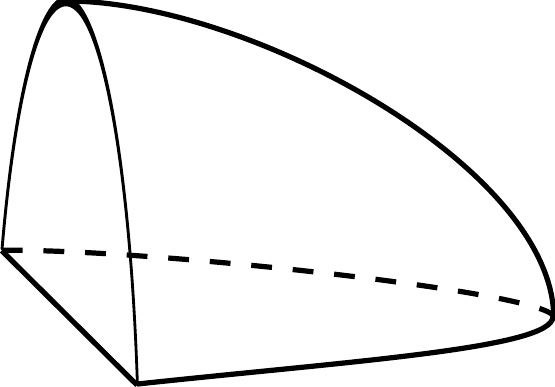}}}
\begin{xy} \xyimport(300,300){\picrelboundary}
,!CD+<0.3pc,-1pc>*+!CU\txt{Schematic of the exterior \\as a triad.}
,(23,180)*!L{X_{\partial N}}
,(0,30)*!L{\partial X_{\partial N}}
,(140,170)*!L{Y_N}
,(120,50)*!L{\partial_+Y_N}
\end{xy}\]
\end{figure}

\subsection{The Blanchfield complex, knot-cobordism and $L$--theory}\label{subsec:blanchfield}

The \emph{Blanchfield complex} of a knot will be our central object of study and is the bridge between the algebraic $L$--theory of the previous sections and our knot theoretic applications. The Blanchfield complex is an invariant of an ambient isotopy class of embeddings $K\co S^n\hookrightarrow S^{n+2}$ that is defined for both odd- and even-dimensional knots. It is the symmetric chain complex generalisation of the classical knot invariant called the \emph{Blanchfield form}, which is defined only for odd-dimensional knots. We will define the Blanchfield form of an odd-dimensional knot below and show how it derives from the Blanchfield complex.

First we spell out the details of the construction of the Blanchfield complex of an $n$--knot $K$, originally defined by Ranicki in \cite[p.\ 822]{MR620795}. We will need the following well-known proposition whose proof is standard obstruction theory.

\begin{proposition}Suppose $f\co (F^{n+1},S^n)\hookrightarrow (D^{n+3},S^{n+2})$ is a locally flat embedding of pairs, and write $f|_{S^n}=K$. Then there is a \emph{meridian map}. That is, a map\[m\co Y_F\to S^1,\]inducing an isomorphism $m_*\co H_*(X_K)\cong H_*(S^1)$ and restricting to projection to the second factor\[m|_{ \partial_+ Y}=\text{pr}_2\co  F\times S^1\to S^1.\]The meridian map is uniquely defined up to homotopy by the fact that it restricts to projection on $\partial_+Y$. If $F=D$ is a slice disc then the meridian map induces an isomorphism $m_*\co H_*(Y_D)\cong H_*(S^1)$.
\end{proposition}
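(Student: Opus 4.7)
My plan is to reduce the construction of $m$ to a cohomological lifting problem via the classifying-space property $[Y_F, S^1] = H^1(Y_F;\Z)$, so that existence, the boundary condition, and uniqueness all become statements about the restriction map $H^1(Y_F)\to H^1(\partial_+Y)$ and its kernel. First, using the decomposition $D^{n+3} = Y_F \cup_{\partial_+Y} (F\times D^2)$, excision and the Thom isomorphism give $H_*(D^{n+3},Y_F)\cong H_*(F\times D^2,F\times S^1)\cong H_{*-2}(F)$. Combined with the long exact sequence of the pair $(D^{n+3},Y_F)$ and the contractibility of $D^{n+3}$, this yields $H_1(Y_F)\cong H_0(F)\cong \Z$ (assuming $F$ is connected), and hence $H^1(Y_F;\Z)\cong \Z$ by universal coefficients.

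Next I would analyse the long exact sequence of the pair $(Y_F,\partial_+Y)$ in cohomology. Excision and contractibility of $D^{n+3}$ give $H^*(Y_F,\partial_+Y)\cong H^*(D^{n+3},F)\cong \widetilde{H}^{*-1}(F)$. By K\"unneth $H^1(\partial_+Y)=H^1(F\times S^1)\cong H^1(F)\oplus \Z$, and the class $[\pr_2]$ lives entirely in the $\Z$ summand. Tracing through the identifications, the connecting map $\delta\co H^1(\partial_+Y)\to H^2(Y_F,\partial_+Y)\cong \widetilde{H}^1(F)$ is the projection onto $H^1(F)$, so $\delta[\pr_2]=0$ and $[\pr_2]$ lifts to some $\alpha\in H^1(Y_F)$. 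A map $Y_F\to S^1$ representing $\alpha$ restricts on $\partial_+Y$ to a map with the same cohomology class as $\pr_2$, hence freely homotopic to $\pr_2$, and the homotopy extension property then produces the required $m\co Y_F\to S^1$ with $m|_{\partial_+Y}=\pr_2$. Uniqueness rel $\partial_+Y$ follows because any two such extensions differ by a class in the kernel of $H^1(Y_F)\to H^1(\partial_+Y)$, which by exactness is the image of $H^1(Y_F,\partial_+Y)\cong \widetilde{H}^0(F)=0$.

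The homology statements are then quick consequences. The meridian of $K$, which generates $H_1(X_K)\cong \Z$, is a small loop encircling $K$ in $\partial X_K\subset \partial_+Y$; on $\partial X_K$ the map $m$ equals $\pr_2$, so the meridian maps to a generator of $H_1(S^1)$, and combined with $H_*(X_K)\cong H_*(S^1)$ (Alexander duality) this gives an isomorphism $m_*\co H_*(X_K)\to H_*(S^1)$. When $F=D$ is a slice disc, the Thom-isomorphism calculation of paragraph one specialises to $H_*(Y_D)\cong H_*(S^1)$, and the same meridional-generator argument gives $m_*\co H_*(Y_D)\xrightarrow{\cong} H_*(S^1)$. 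The main obstacle is the claim about $\delta$: verifying that under the K\"unneth and Thom/excision identifications the $H^0(F)\otimes H^1(S^1)$ summand maps to zero, which is essentially the assertion that a cohomology class Poincar\'{e}-dual to a meridional $2$-disk extends across $F\times D^2$. Mild extra care is needed if $F$ is disconnected, but since $F^{n+1}$ bounds the connected sphere $S^n$ one can without loss of generality work with the component containing the boundary.
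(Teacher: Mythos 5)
Your overall strategy is the ``standard obstruction theory'' the paper alludes to, and most of it is sound: identifying $[Y_F,S^1]$ with $H^1(Y_F;\Z)$, computing $H^1(Y_F)\cong\Z$ and $H^1(Y_F,\partial_+Y)\cong\widetilde{H}^0(F)=0$ by excision against the decomposition $D^{n+3}=Y_F\cup_{F\times S^1}(F\times D^2)$, deducing uniqueness from injectivity of the restriction map, and the concluding homological statements (meridian generates, Alexander duality, the slice-disc computation) are all fine.

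However, the step you yourself call ``the main obstacle'' is a genuine gap, and it is exactly where the geometric content of the proposition sits. The claim that the connecting map $\delta\co H^1(\partial_+Y)\to H^2(Y_F,\partial_+Y)\cong\widetilde{H}^1(F)$ ``is the projection onto $H^1(F)$'' cannot be obtained by ``tracing through the identifications'': the K\"unneth/Thom/excision identifications determine the image of $H^1(F\times D^2)$ inside $H^1(F\times S^1)$, but they do not determine which complementary $\Z$--summand is the image of $H^1(Y_F)$. Concretely, the generator $\alpha$ of $H^1(Y_F)\cong\Z$ restricts on $\partial_+Y$ to $[\pr_2]+\lambda$, where $\lambda\in H^1(F)$ assigns to a curve $c\subset F$ the class of its framed push-off $c\times\{pt\}$ in $H_1(Y_F)\cong\Z$, i.e.\ its linking number with $F$ in $D^{n+3}$. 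This class depends on the framing used to identify $\partial_+Y\cong F\times S^1$: changing the framing by $\xi\in[F,S^1]=H^1(F)$ changes $\lambda$ by $\pm\xi$. Thus $\delta[\pr_2]=\pm\lambda$, and for a general framing (when $H^1(F)\neq 0$) this is nonzero and \emph{no} map $Y_F\to S^1$ restricting to $\pr_2$ exists; your assertion $\delta[\pr_2]=0$ is equivalent to the statement being proved. A complete proof must supply the geometric input that the framing is (or can be chosen to be) the preferred one, e.g.\ by showing $\lambda$ can be killed by a framing change, or by a linking-number argument that push-offs of curves in $F$ are null-homologous in $Y_F$ for the Seifert-type framing. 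Your closing description of the obstacle also misidentifies it: $[\pr_2]$ never extends over $F\times D^2$; the issue is its extension over $Y_F$. (When $H^1(F)=0$, in particular for a slice disc, the problem disappears and your argument goes through for any framing.)
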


\begin{corollary}\label{cor:meridian}There is a \emph{meridian map} on the knot exterior\[m\co X_K\to S^1\]uniquely defined up to homotopy by the property that $m|_{\partial X_K}\co S^n\times S^1\to S^1$ is projection to the second factor.
\end{corollary}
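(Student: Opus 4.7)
The plan is to derive this corollary directly from the preceding Proposition by supplying an embedded Seifert surface and then restricting the meridian map it produces.

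For existence, I would first invoke the standard fact, already noted in the excerpt, that every $n$--knot $K$ bounds a Seifert surface $F^{n+1}$ in $S^{n+2}$, and that such an $F$ may be pushed into the cobounding disc so as to yield a locally flat embedded pair $f\co (F^{n+1},S^n)\hookrightarrow (D^{n+3},S^{n+2})$ with $f|_{S^n}=K$. Applying the previous Proposition produces a meridian map $m_F\co Y_F\to S^1$ restricting to projection on $\partial_+ Y_F=F\times S^1$. Using the triad description of the exterior $(Y_F;X_K,\partial_+Y_F;\partial X_K)$, the manifold $X_K$ sits as a codimension-$0$ submanifold of $\partial Y_F$ meeting $\partial_+Y_F$ along $\partial X_K=S^n\times S^1=\partial F\times S^1$. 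I would then take $m:=m_F|_{X_K}\co X_K\to S^1$; the hypothesis on $m_F|_{\partial_+Y_F}$ restricts along $\partial F\times S^1\subset F\times S^1$ to projection onto the second factor, giving exactly the required boundary behaviour.

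For uniqueness up to homotopy, I would argue cohomologically using that $S^1$ is a $K(\Z,1)$, so homotopy classes $X_K\to S^1$ are classified by $H^1(X_K;\Z)$, with the boundary restriction corresponding to the natural map $H^1(X_K;\Z)\to H^1(\partial X_K;\Z)$. From the long exact cohomology sequence of $(X_K,\partial X_K)$ together with Lefschetz duality $H^1(X_K,\partial X_K;\Z)\cong H_{n+1}(X_K;\Z)$, and the fact that $X_K$ has the homology of $S^1$ (by Alexander duality, as recalled in the excerpt), one has $H_{n+1}(X_K;\Z)=0$ for $n\geq 1$. Hence the restriction map is injective, and any two maps $X_K\to S^1$ that agree on $\partial X_K$ represent the same class in $H^1(X_K;\Z)$ and are homotopic rel boundary.

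The only mildly subtle step is checking that the construction really does yield a meridian map, that is, that the induced map on $H_1(X_K)$ is an isomorphism. This is immediate here, however, because the loop in $X_K$ given by the second factor of $\partial X_K = S^n\times S^1$ is a meridian of $K$ and generates $H_1(X_K)\cong\Z$, and its image under $m$ is by construction the generator of $\pi_1(S^1)$; no independent argument beyond the boundary normalisation is needed. Thus both the existence and the uniqueness statements follow with the Seifert surface construction and Lefschetz duality doing all the work.
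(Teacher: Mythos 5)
Your argument is correct and is essentially the route the paper intends: the corollary is an immediate consequence of the preceding proposition, obtained by pushing a Seifert surface into $D^{n+3}$, restricting the resulting meridian map on $Y_F$ to $X_K\subset\partial Y_F$, and settling uniqueness by the standard $[X_K,S^1]\cong H^1(X_K;\Z)$ obstruction-theory computation using $H^1(X_K,\partial X_K)\cong H_{n+1}(X_K)=0$. Note that your vanishing of $H^1(X_K,\partial X_K)$ in fact gives uniqueness rel boundary directly, which is stronger than the free homotopy statement required.
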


The homotopy class of the meridian map $m\in[X_K,S^1]=[X_K,D^{n+1}\times S^1]$ may be represented by a (degree 1) map of compact, oriented, $(n+2)$--dimensional manifolds with boundary\[(f,\partial f)\co (X_K,\partial X_K)\to (D^{n+1}\times S^1,S^n\times S^1),\]with $\partial f$ the identity map.

Using the standard infinite cyclic cover $D^{n+1}\times \R\to D^{n+1}\times S^1$ with group of covering translations $\Z\cong\langle z\rangle$, we may now apply Ranicki's \emph{symmetric construction} \cite[Proposition 6.5]{MR566491} to obtain the associated kernel pair $\sigma^*(\overline{f},\overline{\partial f})$, which is an $(n+2)$--dimensional symmetric Poincar\'{e} pair over the Laurent polynomial ring $\Z[\Z]\cong\Z[z,z^{-1}]$ with the involution $\overline{z}= z^{-1}$. The underlying morphism of the pair $\sigma^*(\overline{f},\overline{\partial f})$ is given by the morphism $g$ of mapping cones induced by the following diagram \[\xymatrix{C(\overline{f}^!)&C_*(\overline{X_K})\ar[l]\ar[r]_-{\overline{f}_*}&C_*(D^{n+1}\times \R)\ar@/_1pc/[l]_-{\overline{f}^!}\\C(\overline{\partial f}^!)\ar[u]^-{g}&C_*(\overline{\partial X_K})\ar[l]\ar[u]\ar[r]_-{(\overline{\partial f})_*}&C_*(S^n\times\R)\ar[u]\ar@/_1pc/[l]_-{\overline{\partial f}^!}}\]where the \emph{chain level Umkehr maps} $\overline{f}^!$ and $\overline{\partial f}^!$ are defined using Poincar\'{e} and Poincar\'{e}--Lefschetz duality. We refer the reader to \cite[Proposition 6.5]{MR566491} for full details.

\begin{definition}The \emph{Blanchfield complex} of an $n$--knot $K\co S^n\hookrightarrow S^{n+2}$ is the $(n+2)$--dimensional symmetric complex $(C_K,\phi_K)$ over $\Z[z,z^{-1}]$ defined as the algebraic Thom construction of the kernel pair $\sigma^*(\overline{f},\overline{\partial f})$.
\end{definition}

Identifying $(D^{n+1}\times S^1,S^n\times S^1)\cong (X_U,\partial X_U)$, the Blanchfield complex of $K$ can be thought of as a measure of the difference between $K$ and the unknot $U$. In other words, we can think of the Blanchfield complex as a surgery problem trying to improve the knot exterior to an unknot exterior via codimension 2 surgery (see Ranicki \cite[\textsection 7.8]{MR620795}). The Blanchfield complex is an invariant of the ambient isotopy class of $K$ that is well-defined up to homotopy equivalence of $(n+2)$--dimensional symmetric complexes over $\Z[z,z^{-1}]$.

\begin{proposition}\label{prop:auto}The Blanchfield complex $(C_K,\phi_K)$ of an $n$--knot $K$ is Poincar\'{e} and such that\[C_K\oplus C_*(\overline{D^{n+1}\times S^1})\simeq C_*(\overline{X_K}).\]So in particular there is an isomorphism in reduced homology $\widetilde{H}_*(C_K)\cong \widetilde{H}_*(X_K)$. Furthermore,\[1-z\co C_K\to C_K\]is an automorphism of $C_K$.
\end{proposition}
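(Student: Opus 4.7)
The plan is to establish the three assertions in order by leveraging Ranicki's symmetric construction, the splitting of chain-level Umkehr maps, and the Milnor exact sequence for the infinite cyclic cover.

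First I would verify the Poincar\'e property. By construction, $(f,\partial f)\co (X_K,\partial X_K)\to (D^{n+1}\times S^1,S^n\times S^1)$ is a degree $1$ map of $(n+2)$--dimensional Poincar\'e pairs whose boundary restriction $\partial f=\text{id}$ is a homotopy equivalence. Ranicki's symmetric construction (Proposition 6.5 of \cite{MR566491}) applied to the lifts $\overline{f},\overline{\partial f}$ to the infinite cyclic covers then produces an $(n+2)$--dimensional symmetric Poincar\'e pair over $\Z[z,z^{-1}]$. Since the algebraic Thom construction turns symmetric Poincar\'e pairs into symmetric Poincar\'e complexes, $(C_K,\phi_K)$ is Poincar\'e.

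Next I would establish the splitting. The chain-level Umkehr maps $\overline{f}^!$ and $\overline{\partial f}^!$ are defined via Poincar\'e--Lefschetz duality to be (up to chain homotopy) one-sided inverses of $\overline{f}_*$ and $\overline{\partial f}_*$ respectively. Because $\partial f$ is the identity, $\overline{\partial f}_*$ is in fact a chain equivalence, so $C(\overline{\partial f}^!)\simeq 0$ and the morphism $g$ of mapping cones has codomain homotopy equivalent to $C(\overline{f}^!)$. The Thom construction then gives $C_K\simeq C(\overline{f}^!)$. The existence of $\overline{f}^!$ with $\overline{f}_*\circ \overline{f}^!\simeq \text{id}$ splits the homotopy cofibration sequence
\[
C_*(\overline{D^{n+1}\times S^1})\xrightarrow{\overline{f}^!}C_*(\overline{X_K})\to C(\overline{f}^!)
\]
which yields the claimed homotopy equivalence
\[
C_K\oplus C_*(\overline{D^{n+1}\times S^1})\simeq C_*(\overline{X_K}),
\]
after noting that the infinite cyclic cover of $D^{n+1}\times S^1$ is $D^{n+1}\times\R$. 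The reduced homology statement follows because $D^{n+1}\times\R$ is contractible; I would note (and proceed under the assumption) that the stated $\widetilde{H}_*(X_K)$ should read $\widetilde{H}_*(\overline{X_K})$ for the comparison to make sense as $\Z[z,z^{-1}]$--modules.

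Finally, to show $1-z\co C_K\to C_K$ is a chain equivalence I would invoke the Milnor/Wang exact sequence for the infinite cyclic cover $\overline{X_K}\to X_K$:
\[
\cdots\to H_n(\overline{X_K})\xrightarrow{1-z}H_n(\overline{X_K})\to H_n(X_K)\to H_{n-1}(\overline{X_K})\to\cdots
\]
Since the meridian map induces $m_*\co H_*(X_K)\cong H_*(S^1)$, the $H_n(X_K)$ vanish for $n\neq 0,1$, and the sequence forces $1-z$ to act as an isomorphism on $H_n(\overline{X_K})$ for $n\geq 1$; in degree $1$ one must also use that the connecting map $H_1(X_K)\to H_0(\overline{X_K})$ is an isomorphism $\Z\to \Z$, as both arise from the meridian class. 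For $n=0$ the map $1-z$ is zero on $H_0(\overline{X_K})\cong\Z$, but the long exact sequence for the splitting above shows $H_0(C_K)=0$ (because $\overline{f}_*$ is an iso in degree $0$), so this is immaterial. Hence $1-z$ induces an isomorphism on $H_*(C_K)$. As $C_K$ is a bounded complex of finitely generated projective $\Z[z,z^{-1}]$--modules, a homology isomorphism is automatically a chain homotopy equivalence, proving the claim.

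The main technical obstacle is the careful bookkeeping in the splitting argument: ensuring that the Umkehr map is genuinely split up to homotopy on covers, that the boundary kernel is contractible, and that all identifications are compatible with the $\Z[z,z^{-1}]$--action. These facts are essentially contained in Ranicki's construction but require checking in the present equivariant setting. The Poincar\'e and $1-z$ assertions are then formal consequences.
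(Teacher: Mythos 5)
Your argument for the Poincar\'{e} property rests on the claim that ``the algebraic Thom construction turns symmetric Poincar\'{e} pairs into symmetric Poincar\'{e} complexes'', and this is false as a general principle. Up to homotopy equivalence, \emph{every} $(n+2)$--dimensional symmetric complex arises as the algebraic Thom construction of an $(n+2)$--dimensional symmetric Poincar\'{e} pair (the inverse operation is Ranicki's algebraic boundary/surgery construction), so if your principle held, every symmetric complex would be Poincar\'{e}. Geometrically, the Thom construction of the symmetric Poincar\'{e} pair of a manifold with non-empty boundary is the symmetric complex of the pair $(M,\partial M)$, which satisfies Lefschetz duality rather than Poincar\'{e} duality; e.g.\ for $(D^1,S^0)$ the duality map $H^0(D^1,S^0)=0\to H_1(D^1,S^0)=\Z$ is not an isomorphism. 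The correct criterion, which is what the paper uses (Ranicki \cite[Proposition 1.3.3]{MR620795}), is that the Thom construction of a pair is Poincar\'{e} if and only if the pair is homotopy equivalent to one of the form $(0\co 0\to D,(\phi,0))$, i.e.\ has contractible ``boundary'' piece.

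The gap is repairable with an observation you make in your next paragraph but do not connect to the Poincar\'{e} claim: since $\partial f=\id$, the Umkehr $\overline{\partial f}^!$ is a chain equivalence, so $C(\overline{\partial f}^!)\simeq 0$ and the kernel pair $\sigma^*(\overline{f},\overline{\partial f})$ is homotopy equivalent to a pair of the required form $(0\to D,(\phi,0))$; the criterion above then yields that $(C_K,\phi_K)$ is Poincar\'{e} --- this is exactly the paper's argument. The remainder of your proof is sound: the splitting via $\overline{f}_*\overline{f}^!\simeq 1$ is the direct sum decomposition of Ranicki \cite[Proposition 6.5]{MR566491} that the paper cites, and your treatment of $1-z$ (Wang/Milnor sequence for the infinite cyclic cover together with $H_*(X_K)\cong H_*(S^1)$ and the fact that the connecting map is carried by the meridian, then ``homology isomorphism implies chain equivalence'' for bounded complexes of projectives) is a correct alternative to the paper's route, which instead tensors $C_K$ with the resolution $0\to\Z[z,z^{-1}]\xrightarrow{1-z}\Z[z,z^{-1}]\to\Z\to 0$ and deduces acyclicity of $\Z\otimes_{\Z[z,z^{-1}]}C_K$ from $(f,\partial f)$ being a $\Z$--homology equivalence; both hinge on the same Alexander duality input.
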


\begin{proof}By \cite[Proposition 1.3.3]{MR620795}, a symmetric complex is Poincar\'{e} if and only if it is the Thom construction of a pair that is homotopy equivalent to a pair of the form\[(0\co 0\to D,(\phi,0)).\]But indeed,  $\partial f=\text{id}\co \partial X_K\to \partial X_U$ implies that the chain level Umkehr map $\overline{\partial f}^!$ in $h\B_+(\Z[z,z^{-1}])$ is a chain homotopy equivalence, so that $C(\overline{\partial f}^!)$ is contractible and the kernel pair $\sigma^*(\overline{f},\overline{\partial f})$ is of the required form. The direct sum decomposition of \cite[Proposition 6.5]{MR566491} reduces to the claimed decomposition under the algebraic Thom construction.

The augmentation $\eps\co \Z[z,z^{-1}]\to \Z$ sending $z\mapsto 1$ fits into the free $\Z[z,z^{-1}]$--module resolution\[0\to \Z[z,z^{-1}]\xrightarrow{1-z}\Z[z,z^{-1}]\xrightarrow{\eps} \Z\to 0.\]Applying this coefficient sequence to $C_K$ shows that the statement that $1-z$ acts as an automorphism of $C_K$ is equivalent to saying that $\Z\otimes_{\Z[z,z^{-1}]} C_K$ is acyclic. But it is easy to see that $\Z\otimes_{\Z[z,z^{-1}]} C_K$ is acyclic as the original map $(f,\partial f)$ was a $\Z$--homology equivalence (by Alexander duality, as already noted). Taking the cone $C(\overline{f}^!)$ and forgetting the action of the covering translations results in an acyclic complex.
\end{proof}

Recall that $P$ denotes the set of Alexander polynomials.

\begin{lemma}\label{lem:auto}If $C$ is a chain complex in $h\B_+(\Z[z,z^{-1}])$ then $(1-z)\co C\to C$ is an automorphism if and only if there exists $p\in P$ such that $pH_*(C)=0$.
\end{lemma}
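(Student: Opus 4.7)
The plan is to exploit two complementary features of the set $P$ of Alexander polynomials: the algebraic fact that every $p\in P$ can be written as $\pm 1 + (1-z)r(z)$ for some $r\in\Gamma$ (using Taylor expansion at $z=1$, or equivalently that $z^i-1\in(1-z)\Gamma$ for every $i\in\Z$), and the structural fact that $(1-z)$ lies in the Jacobson radical of the localisation $P^{-1}\Gamma$. Throughout the argument I write $\Gamma=\Z[z,z^{-1}]$ and use that an endomorphism of a bounded chain complex of f.g.\ projective $\Gamma$--modules is an automorphism in $h\B_+(\Gamma)$ precisely when it is a quasi-isomorphism.

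For the ``if'' direction, suppose $p\in P$ satisfies $pH_*(C)=0$, and use the expansion $p(z)=\pm 1 + (1-z)r(z)$ noted above. Then for any class $[x]\in H_n(C)$ the relation
\[0 \;=\; p\cdot[x] \;=\; \pm[x] + (1-z)r(z)[x]\]
exhibits $\mp r(z)$ as a two-sided inverse to the action of $(1-z)$ on $H_n(C)$. Hence $(1-z)\co C\to C$ induces isomorphisms on every homology group, and since $C$ is a bounded chain complex of f.g.\ projective $\Gamma$--modules, a quasi-isomorphism is automatically a chain homotopy equivalence.

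For the ``only if'' direction, I first verify the Jacobson radical claim. Given any $a/s\in P^{-1}\Gamma$ with $s\in P$, I compute $1-(1-z)(a/s) = (s-(1-z)a)/s$, and the augmentation satisfies $\eps(s-(1-z)a) = \eps(s) = \pm 1$. Thus $s-(1-z)a\in P$ is a unit in $P^{-1}\Gamma$, so $1-(1-z)(a/s)$ is a unit for every $a/s$, which is the Jacobson radical criterion. Now each $H_n(C)$ is finitely generated over the Noetherian ring $\Gamma$, so $P^{-1}H_n(C)$ is a finitely generated $P^{-1}\Gamma$--module. The hypothesis that $(1-z)\co C\to C$ is an automorphism in $h\B_+(\Gamma)$ implies $(1-z)$ acts as an isomorphism on $H_n(C)$, hence on $P^{-1}H_n(C)$. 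In particular $(1-z)\cdot P^{-1}H_n(C) = P^{-1}H_n(C)$, and Nakayama's lemma forces $P^{-1}H_n(C)=0$; that is, $H_n(C)$ is $P$--torsion.

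Finally, since $C$ is bounded, only finitely many $H_n(C)$ are non-zero and each is finitely generated over $\Gamma$, so selecting an annihilator in $P$ for each of finitely many generators and taking the product yields a single $p\in P$ with $pH_*(C)=0$. The only non-routine step is the Jacobson radical observation together with the Nakayama application, but once these are in hand the argument assembles quickly and there is no genuine obstacle.
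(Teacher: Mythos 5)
Your proof is correct, but it takes a genuinely different route from the paper in both directions. For the ``if'' direction the paper does not manipulate $p$ at all: it uses exactness of localisation to get $H_*(P^{-1}\Gamma\otimes_\Gamma C)=0$, factors the augmentation $\eps\co\Gamma\to\Z$ through $P^{-1}\Gamma$ (possible exactly because $p(1)$ is a unit for $p\in P$) to conclude $H_*(\Z\otimes_\Gamma C)=0$, and then invokes the criterion already established in Proposition \ref{prop:auto} via the coefficient sequence $0\to\Gamma\xrightarrow{1-z}\Gamma\to\Z\to0$, namely that $\Z\otimes_\Gamma C$ acyclic is equivalent to $1-z$ being an automorphism. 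You instead write $p(z)=\pm1+(1-z)r(z)$ and produce $\mp r(z)$ as an explicit inverse of $1-z$ on each homology module; this is more hands-on and bypasses the coefficient-sequence criterion, at the (mild) cost of having to invoke that a quasi-isomorphism of bounded complexes of f.g.\ projectives is a homotopy equivalence. For the ``only if'' direction the paper simply cites Levine \cite[Corollary 1.3]{MR0461518}, whereas you give a self-contained argument: $(1-z)$ lies in the Jacobson radical of $P^{-1}\Gamma$ (your unit computation $1-(1-z)(a/s)=(s-(1-z)a)/s$ with $\eps(s-(1-z)a)=\pm1$ is correct), so Nakayama applied to the f.g.\ modules $P^{-1}H_n(C)$ forces them to vanish, and boundedness plus multiplicativity of $P$ assembles a single annihilator $p\in P$. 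What your version buys is independence from Levine's paper and an argument that visibly only uses that elements of $P$ augment to units; what the paper's version buys is brevity and consistency with the surrounding machinery (the same augmentation/coefficient-sequence argument already used in Proposition \ref{prop:auto}). Both are valid proofs of the stated equivalence.
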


\begin{proof}
Suppose $H_*(C)$ is $P$--torsion. Then, as localisation is exact, $H_*(P^{-1}\Z[z,z^{-1}]\otimes_{\Z[z,z^{-1}]}C)=0$. The augmentation map $\eps\co \Z[z,z^{-1}]\to \Z$ from above factors as\[\eps\co \Z[z,z^{-1}]\to P^{-1}\Z[z,z^{-1}]\to \Z\]because $p(1)\in\Z$ is a unit for all $p\in P$. Hence $H_*(\Z\otimes_{\Z[z,z^{-1}]}C)=0$, which has already been observed to be equivalent to saying that $(1-z)\co C\to C$ is an automorphism.

The converse follows just as in the proof of Levine \cite[Corollary 1.3]{MR0461518}.
\end{proof}

\begin{corollary}\label{cor:Pacyclic}The Blanchfield complex $(C_K,\phi_K)$ of an $n$--knot $K$ is an $(n+2)$--dimensional $P$--acyclic symmetric Poincar\'{e} complex over $\Z[z,z^{-1}]$. Equivalently, by Proposition \ref{prop:cartmorph}, $(C_K,\phi_K)$ is an $(n+2)$--dimensional $P$--acyclic symmetric Poincar\'{e} complex over $\Lambda$.
\end{corollary}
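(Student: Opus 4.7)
The plan is that this corollary follows almost immediately by combining the two preceding results, so the proof will be a short assembly rather than a new argument.

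First, I would invoke Proposition \ref{prop:auto} to record that $(C_K,\phi_K)$ is already known to be an $(n+2)$-dimensional symmetric Poincar\'{e} complex over $\Z[z,z^{-1}]$, and that multiplication by $1-z$ acts as an automorphism on $C_K$. Since $C_K$ lies in $h\B_+(\Z[z,z^{-1}])$ (the kernel pair construction applied to a map of compact manifolds produces a finite, positive complex of f.g.\ projectives), Lemma \ref{lem:auto} then applies and yields some $p\in P$ with $pH_*(C_K)=0$. In particular $H_*(C_K)$ is $S$-torsion for the multiplicative set $P$. Since $P^{-1}(-)$ is exact on $\Ch(\Z[z,z^{-1}])$, this gives $H_*(P^{-1}C_K)=P^{-1}H_*(C_K)=0$, which is the definition of $P$-acyclicity and thus places $(C_K,\phi_K)$ in the restricted cobordism category $h\C_+(\Z[z,z^{-1}],P)$.

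For the second sentence, I would simply appeal to Proposition \ref{prop:cartmorph}: the Cartesian morphism of localisations induced by adjoining $(1-z)^{-1}$ gives an equivalence of categories of $P$-acyclic $n$-dimensional symmetric (Poincar\'{e}) complexes over $\Z[z,z^{-1}]$ with those over $\Lambda=\Z[z,z^{-1},(1-z)^{-1}]$, so the complex $(C_K,\phi_K)$ can be viewed indifferently over either ring. There is no genuine obstacle here; the only thing that requires a moment's care is verifying that the localisation argument preserves the Poincar\'{e} duality structure (which is built into Proposition \ref{prop:cartmorph}) and that the finiteness and projectivity hypotheses needed by Lemma \ref{lem:auto} are inherited from the symmetric construction, both of which are immediate.
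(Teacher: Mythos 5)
Your proposal is correct and follows exactly the route the paper intends: the corollary is stated without proof precisely because it is the immediate combination of Proposition \ref{prop:auto} (Poincar\'{e} structure and the $1-z$ automorphism), Lemma \ref{lem:auto} (the automorphism condition is equivalent to $P$--torsion homology, hence $P$--acyclicity), and Proposition \ref{prop:cartmorph} (transfer of $P$--acyclic symmetric Poincar\'{e} complexes to $\Lambda$). Nothing further is needed.
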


The set of ambient isotopy classes of $n$--knots, equipped with the operation of connected sum of knots $K_1\# K_2\co S^n\hookrightarrow K_1(S^n)\# K_2(S^n)$ is a commutative monoid called $Knots_n$, with unit given by the unknot $U$. The \emph{$n$--dimensional (topological) knot-cobordism group} is the group given by the monoid construction\[\mathcal{C}_n:=Knots_n/\{\text{slice knots}\}.\]The assignment to a knot of its Blanchfield complex \[\sigma^L\co \mathcal{C}_n\to L^{n+1}(\Lambda,P,-1);\qquad K\mapsto (C_K,\phi_K)\]is shown by Ranicki \cite[\textsection 7.8]{MR620795} to give a well-defined group homomorphism. We refer the reader to the author's thesis \cite[Lemmas 6.3.7, 6.3.8]{Orsonthesis} for a proof using only the tools developed in this paper.

\subsection{Seifert and Blanchfield forms of a $(2k+1)$--knot}

In this subsection we briefly recall some standard definitions, and a theorem from another paper by the author, which we shall need for the next subsection. 

Suppose $n=2k+1$. An $n$--dimensional knot has two very tractable and well-understood homological invariants, called the Blanchfield and Seifert forms of the knot. The Blanchfield form can be defined directly from the Blanchfield complex but the Seifert form depends on a choice of Seifert surface $j\co F\hookrightarrow S^{n+2}$ for the knot $K$ so cannot be derived from the Blanchfield complex without this extra information. However, the (double) Witt class of the Blanchfield form \emph{does} determine the (double) Witt class of any choice of Seifert form.

\begin{definition}The \emph{Blanchfield form for $K$} is the non-singular $(-1)^{k}$--symmetric linking form over $(\Z[z,z^{-1}],P)$ defined by Proposition \ref{prop:welldeflagrang2}\[Bl\co f(H^{k+2}(C_K))\times f(H^{k+2}(C_K))\to P^{-1}\Z[z,z^{-1}]/\Z[z,z^{-1}].\]
\end{definition}

\begin{remark}This definition is Poincar\'{e} dual to the common definition of a Blanchfield form as given for example by Levine \cite{MR0461518}.
\end{remark}

If $P$ is a f.g.\ $\Z$--module, denote the torsion-free component by $f(P):=P/TP$.

\begin{definition}\label{def:seifert}Given a choice of Seifert surface, the \emph{Seifert form of $(F,K)$} is the (well-defined) $(-1)^{k+1}$--symmetric Seifert form $(f(H^{k+1}(F)),\psi)$ over $\Z$ given by\[\psi\co f(H^{k+1}(F))\times f(H^{k+1}(F))\to \Z;\qquad(u,v)\mapsto l(x,i_*(y)),\]where $l$ denotes linking number, $x=u\cap[F]$, $y=v\cap[F]$ and $i:F\to S^{n+2}\sm F$ is defined by translation in the positive normal direction. It has the property that $(f(H^{k+1}(F)),\psi+(-1)^{k+1}\psi^*)$ is the non-singular, $(-1)^{k+1}$--symmetric middle-dimensional cohomology intersection pairing of $F$.
\end{definition}

Any choice of Seifert form determines the Blanchfield form via the algebraic analogue of the cut-and-paste construction of the infinite cyclic cover of the knot exterior. For precise details of this, see Levine \cite{MR0461518}. On the level of Witt groups, we show in another paper \cite{OrsonA} that the \emph{algebraic} cut-and-paste construction gives rise to the following:

\begin{theorem}[Covering isomorphism {\cite[4.8]{OrsonA}}]\label{thm:covering}For any $R$, there is an isomorphism of groups\[B:\widehat{DW}_\eps(R)\xrightarrow{\cong} DW^{-\eps}(R[z,z,(1-z)^{-1}],P),\]where $B$ is the algebraic covering morphism of Ranicki \cite{MR2058802}.
\end{theorem}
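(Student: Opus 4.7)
My plan is to prove the covering isomorphism by constructing explicit representatives for $B$ and its inverse at the level of forms, and verifying both respect the hyperbolic equivalence relation, with particular attention to the preservation of \emph{complementary} lagrangians.

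First I would recall the construction of $B$ at the form level following Ranicki \cite{MR2058802}: given an $\eps$-symmetric Seifert form $(K,\psi)$ over $R$, the module $K$ inherits an $R[s]$-structure with $s=(\psi+\eps\psi^*)^{-1}\psi$, and one forms the $\Lambda$-module $T=\coker(\psi - z\psi^*\co\Lambda\otimes_R K\to\Lambda\otimes_R K^*)$ together with the $(-\eps)$-symmetric linking form $\lambda_\psi([x],[y])=(1-z)^{-1}\overline{y(\tilde{x})}$ for suitable lifts $\tilde{x}$. The invertibility of $\psi+\eps\psi^*$ combined with the presence of $(1-z)^{-1}$ in $\Lambda$ ensures $T\in\H(\Lambda,P)$ and that $\lambda_\psi$ is non-singular. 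To show $B$ descends to double Witt groups, I would observe that complementary $R[s]$-lagrangians $L_\pm\hookrightarrow K$ remain complementary $\Lambda$-submodules after extending scalars, descend to complementary submodules $T_\pm\subset T$, and that the Seifert lagrangian condition $j_\pm^*\psi j_\pm=0$ translates directly into the torsion-duality lagrangian condition via the formula defining $\lambda_\psi$.

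For the inverse, I would use the algebraic analogue of cutting an infinite cyclic cover along a Seifert surface, in the spirit of Levine \cite{MR0461518}. Given a non-singular $(-\eps)$-symmetric linking form $(T,\lambda)$ over $(\Lambda,P)$, the classical presentation argument produces an $R$-module $K$ and a Seifert pairing $\psi\co K\to K^*$ whose covering under $B$ recovers $(T,\lambda)$ up to isomorphism. This Seifert form is well-defined up to stabilisation by hyperbolic Seifert forms, giving an inverse homomorphism $B^{-1}$ at the double Witt group level.

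The principal technical obstacle is the reverse direction of the lagrangian correspondence: given complementary lagrangian submodules $T_\pm\subset T$ of a linking form in the image of $B$, one must lift them to \emph{complementary} $R[s]$-lagrangians of the Seifert form. Lifting a single lagrangian of a linking form to a lagrangian of a Seifert form is classical (compare the Witt-group covering isomorphism of Ranicki \cite{MR2058802}), but complementarity is not automatic. The argument I plan to use exploits the augmentation $\Lambda\to R$, $z\mapsto 1$: modulo $(z-1)$, the linking form $\lambda_\psi$ reduces to the non-singular symmetric intersection pairing $\psi+\eps\psi^*$, so complementary lagrangian decompositions $T=T_+\oplus T_-$ induce complementary decompositions of $K/(z-1)K$ as $R$-modules, which then lift by a Nakayama-style argument back to complementary $R[s]$-direct summand decompositions of $K$ itself. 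This lift, together with the already-established fact that each individual lift is a Seifert lagrangian, gives the hyperbolic structure at the Seifert level and hence the injectivity of $B$.
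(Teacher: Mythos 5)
Your overall strategy (define $B$ on the level of forms, check it sends hyperbolics to hyperbolics, build a Levine-style inverse) is reasonable in outline, and note that the paper itself offers no proof of this statement at all: it is imported verbatim from \cite[Theorem 4.8]{OrsonA}, so your argument can only be judged on its own internal soundness. There it has a genuine gap, located precisely at the step you yourself identify as the principal obstacle. Your mechanism for lifting complementary lagrangians $T_\pm\subset T$ of the linking form back to complementary $R[s]$-lagrangians of $(K,\psi)$ invokes ``the augmentation $\Lambda\to R$, $z\mapsto 1$''. No such ring homomorphism exists: $1-z$ is a unit in $\Lambda=R[z,z^{-1},(1-z)^{-1}]$, so nothing can be sent to $z=1$. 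Even if you retreat to $R[z,z^{-1}]$ and use the equivalence of categories of Proposition \ref{prop:cartmorph}, the covering module $T$ is $P$-torsion, and $1-z$ acts invertibly on any $P$-torsion module (write $p(z)=p(1)+(z-1)q(z)$ with $p(1)$ a unit and $pT=0$); hence $T/(z-1)T=0$, and a decomposition $T=T_+\oplus T_-$ induces no decomposition of $K\cong (R[z,z^{-1}]\otimes_RK)/(z-1)$ whatsoever -- the preimages of $T_\pm$ in the presentation module each reduce to everything modulo $(z-1)$, because the presentation matrix becomes invertible there. The Nakayama-style lift also has no footing, since $z-1$ is not in the Jacobson radical of $R[z,z^{-1}]$ (and is a unit in $\Lambda$).

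This is not a repairable detail but the crux of the theorem: the entire difference between Ranicki's covering isomorphism of single Witt groups and the double Witt statement is whether complementarity can be transported back across $B$. It is telling that the present paper never performs such a direct lift: to show the Seifert form of a doubly slice knot is hyperbolic (Theorem \ref{thm:theorem}) it deduces only \emph{stable} hyperbolicity from Theorem \ref{thm:covering} together with Corollary \ref{cor:blanchfield}, and then upgrades to hyperbolicity using the double $L$-theoretic Corollary \ref{cor:stablyhypishypseif}. For the isomorphism of double Witt groups you do not in fact need to lift honest complementary lagrangians -- it would suffice to construct a well-defined inverse homomorphism on stable classes and verify both composites -- but as written your injectivity argument rests entirely on the lifting claim, and the proposed proof of that claim fails. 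You would need either the actual argument of \cite[Theorem 4.8]{OrsonA} or a genuinely different handle on complementarity (for instance one phrased stably, in the spirit of the double $L$-group results of Section \ref{sec:DWgroups}).
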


\subsection{Double knot-cobordism and double $L$--theory}

Recall the definition of a \emph{doubly slice} knot from Section \ref{sec:intro}. We finally show that the Blanchfield complex gives the desired invariant of doubly slice knots. As a consequence of this and the various algebraic work we have done in Sections \ref{sec:DLgroups} and \ref{sec:DWgroups} we obtain several new results for high-dimensional doubly slice knots, and some reproofs (from a very different perspective) of previously known results.

If $K, K'$ are doubly slice $n$--knots then Stoltzfus \cite{MR521738} showed $K\#K'$ is also doubly slice. Hence the doubly slice knots form a closed submonoid of $Knots_n$.

\begin{definition}The \emph{$n$--dimensional (topological) double knot-cobordism group} is the group given by the monoid construction\[\mathcal{DC}_n:=Knots_n/\{\text{doubly slice knots}\}.\]Simple doubly slice knots are similarly a closed submonoid of $Knots_n^\text{simp}$. So define\[\mathcal{DC}_n^\text{simp}:=Knots_n^\text{simp}/\{\text{simple doubly slice knots}\}.\]
\end{definition}

\begin{proposition}\label{prop:invariant}If $K$ is doubly slice then the Blanchfield complex $(C_K,\phi_K)$ is algebraically double-nullcobordant.
\end{proposition}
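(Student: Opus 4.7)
The plan is to construct two algebraic nullcobordisms of the Blanchfield complex $(C_K,\phi_K)$ directly from the two complementary slice discs, and to verify that they are complementary by reducing to the fact that gluing the two slice discs along $K$ recovers the unknot.

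Let $(D_\pm,K)$ be a pair of complementary slice discs for $K$, with exteriors $Y_{D_\pm}$. I would first apply Ranicki's original argument constructing a nullcobordism from a single slice disc to each $D_\pm$ separately. Since $H_\ast(Y_{D_\pm})\cong H_\ast(S^1)$ by Alexander duality in $D^{n+3}$, the meridian map $m\co X_K\to S^1$ of Corollary \ref{cor:meridian} extends uniquely up to homotopy to a meridian map $m_\pm\co Y_{D_\pm}\to S^1$. Representing $m_\pm$ by a degree $1$ map of compact oriented manifold pairs lifting the representative of $m$ on $X_K$, applying the relative symmetric construction over $\Z[z,z^{-1}]$ to the kernel pair, and taking algebraic Thom constructions produces $(n+3)$--dimensional $P$--acyclic symmetric Poincar\'{e} pairs
\[(f_\pm\co C_K\to D_\pm,(\delta_\pm\phi,\phi_K))\]
whose boundary is $(C_K,\phi_K)$. ($P$--acyclicity follows as in Corollary \ref{cor:Pacyclic} using Lemma \ref{lem:auto}, since $(1-z)$ acts as an automorphism on the relative kernel complexes.)

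To verify complementarity, observe that the complementary slice discs glue along $K$ to form an unknotted $S^{n+1}\hookrightarrow S^{n+3}$, with $S^{n+3}=D^{n+3}_+\cup_{S^{n+2}}D^{n+3}_-$. A direct check on tubular neighbourhoods identifies the unknot exterior as
\[X_U=Y_{D_+}\cup_{X_K}Y_{D_-}.\]
Lifting to the infinite cyclic covers and applying Mayer--Vietoris produces a homotopy cofibration sequence of chain complexes over $\Z[z,z^{-1}]$. Splitting off common trivial summands as in Proposition \ref{prop:auto} (and its pair-analogue for each $Y_{D_\pm}$) reduces this to a homotopy cofibration sequence
\[C_K\xrightarrow{\lmat f_+\\f_-\rmat} D_+\oplus D_-\to C_U,\]
where $C_U$ is the Blanchfield complex of the unknot. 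Since $\overline{X_U}$ deformation retracts onto $\R$, we have $\widetilde{H}_\ast(\overline{X_U})=0$ and hence $C_U\simeq 0$. Therefore $\lmat f_+\\f_-\rmat\co C_K\xrightarrow{\simeq} D_+\oplus D_-$ is a homotopy equivalence, so by Definition \ref{def:comp2} the two nullcobordisms are complementary and $(C_K,\phi_K)$ is algebraically double-nullcobordant.

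The main technical obstacle I expect is making rigorous the compatibility between the relative symmetric construction plus algebraic Thom construction on one hand, and the Mayer--Vietoris decomposition of the infinite cyclic covers on the other. Once one establishes that the pair chain complexes $D_\pm$ produced by the two single-disc constructions really do fit into a homotopy cofibration sequence whose third term is $C_U$, the acyclicity of the unknot cover makes the complementary condition automatic.
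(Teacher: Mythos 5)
Your proposal is correct and follows essentially the same route as the paper: each slice disc exterior, mapped by the meridian map to the trivial disc exterior, yields via Ranicki's symmetric construction and the (relative) algebraic Thom construction a $(\Lambda,P)$--nullcobordism $x_\pm$ of $(C_K,\phi_K)$, and complementarity comes from the fact that $D_+\cup_K D_-$ is unknotted. The compatibility you flag as the main technical obstacle is exactly the point the paper's proof supplies: it forms the algebraic union $x_+\cup x_-$ (whose underlying complex is the cone on $\lmat f_+\\ f_-\rmat$), notes that the glueing, Thom and kernel constructions are all mapping cones which may be performed in any order, and so identifies the union with the Thom construction on the kernel of the glued map $\overline{F_+}^!\cup\overline{F_-}^!$, which is contractible because the glued disc exteriors form the unknotted sphere exterior -- this is precisely the rigorous form of your Mayer--Vietoris step, so the two arguments coincide.
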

\begin{proof}We must check that Blanchfield complex of a doubly slice knot $K$ admits complementary $(\Lambda,P)$--nullcobordisms. Given a single slice disc $(D,K)$, represent the homotopy class of the meridian map $m\in[Y_D,S^1]=[Y,D^{n+3}\times S^1]$ by a (degree 1) map of compact oriented manifold triads\[F=(f,\partial f, \partial'f,\partial\partial f)\co (Y_D;X_K,\partial_+ Y_D;\partial X_K)\to(D^{n+3}\times S^1; D^{n+1}\times S^1,D^{n+1}\times S^1;S^n\times S^1),\]where both $\partial'f$ and $\partial\partial f$ are identity maps. (We can think of $F$ as a map from the slice disc exterior to the `trivial slice disc' exterior.)

\begin{figure}[h]\[\def\picslicetriad{\resizebox{0.85\textwidth}{!}{ \includegraphics{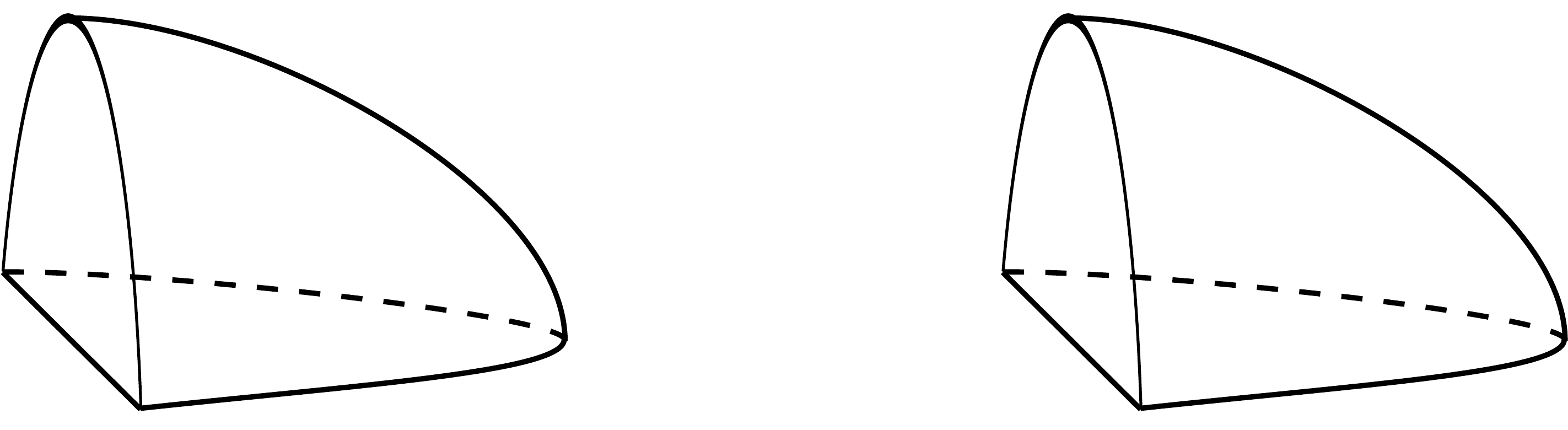}}}
\begin{xy} \xyimport(259,139){\picslicetriad}
,!+<7.2pc,2pc>*+!\txt{}
,(6,80)*!L{X_K}
,(-5,20)*!L{\partial X_K}
,(30,30)*!L{\partial_+ Y_D}
,(40,80)*!L{Y_D}
,(129,70)*!L{D^{n+1}\times S^1}
,(145,25)*!L{S^{n}\times S^1}
,(192,30)*!L{D^{n+1}\times S^1}
,(195,80)*!L{D^{n+2}\times S^1}
,(85,100)*+{}="A";(160,100)*+{}="B"
,{"A"\ar@/^/"B"}
,(120,123)*!L{F}
\end{xy}\]
  \caption{A schematic of the surgery problem determining the kernel triad.}
\end{figure}

Using the standard $\Z$--cover $D^{n+3}\times\R\to D^{n+3}\times S^1$ we use Ranicki's symmetric construction to obtain the kernel triad $\sigma^*(\overline{F}^!)=(\Gamma,(\Phi,\delta\phi,\delta'\phi,\phi))$, some $(n+3)$--dimensional symmetric Poincar\'{e} triad in $h\B_+(\Z[z,z^{-1}])$. Now the relative algebraic Thom construction on this triad results in the set of pairs $\{x,x'\}$. But as $C(\partial\partial f^!),C(\partial' f^!)\simeq 0$, we have\[x=(C(\overline{\partial f}^!)\to C(\overline{f}^!),(\Phi/0,\delta\phi/0)),\qquad x'=(0\to C(C(\overline{\partial f}^!)\to C(\overline{f}^!)),(\Phi/\delta\phi,0/0)).\]Hence we have the pair $x=(C_K\to C(\overline{f}^!),(\Phi,\phi_K))$ for $(C_K,\phi_K)$ the Blanchfield complex of $K$. We need to show $x$ is an algebraic nullcobordism in $h\C_+(\Z[z,z^{-1}],P)$, in other words that $x$ is a $P$-acyclic Poincar\'{e} pair. But the kernel triad $\sigma^*(\overline{F}^!)$ is Poincar\'{e}, so by definition of a Poincar\'{e} triad $(0\cup_0 C_K\to C(\overline{f}^!),(\Phi,0\cup_0\phi_K))=x$ is Poincar\'{e}. Furthermore,  the meridian map $m$ is a homology equivalence on $Y_D$ and on $X_K$, so by the same arguments as in Proposition \ref{prop:auto} and Lemma \ref{lem:auto}, the Poincar\'{e} pair $x$ is moreover in $h\C_+(\Z[z,z^{-1}],P)$. Now, by Proposition \ref{prop:cartmorph}, $x$ is also a $(\Lambda,P)$-- nullcobordism

Hence taking now a \emph{pair} of complementary slice discs $(D_\pm, K)$ results in a pair of morphisms of compact oriented manifold triads\[F_\pm\simeq (f_\pm;\partial f_\pm,\text{id};\text{id})\co (Y_{D_\pm};\partial X_K,\partial_0Y_{D_\pm};\partial X_{\partial K})\to (Y_{U};\partial X_U,\partial_0Y_{U};\partial X_{U}),\]which result in a pair of $(\Lambda,P)$--nullcobordisms of the Blanchfield complex \[x_\pm\simeq(C_K\to C(\overline{f}_\pm^!),(\Phi_\pm,\phi_K)).\]We wish to check that the algebraic union $x_+\cup x_-\simeq 0$. But the underlying chain complex of an algebraic glueing is given by a certain mapping cone on the chain level. The same is true for the algebraic Thom construction on a pair, and the construction of the kernel triads. We may perform these mapping cones in any order and receive the same result, hence the underlying chain complex of $x_+\cup x_-$ is the result of performing these operations in the following order: glue the maps of algebraic triads $\overline{F_+}^!\cup \overline{F_-}^!$ (by glueing the triads along the knot exteriors $C(X_K)$), form the kernel triad $\sigma(\overline{F_+}^!\cup \overline{F_-}^!)$, then perform the algebraic Thom construction on this triad. But as the slice discs $(D,K)$ were complementary, we have that the union $D_+\cup_K D_-$ is unknotted in $S^{n+3}$ and hence $\overline{F_+}^!\cup \overline{F_-}^!$ is chain homotopic to the identity. Therefore the triad $\sigma(\overline{F_+}^!\cup \overline{F_-}^!)$ is contractible and hence $x_+\cup x_-\simeq 0$ as required.
\end{proof}

\begin{remark}The transitivity of the double $L$--groups mean that we have just given a partial affirmative answer to an algebraic question of Levine \cite[3(2)]{MR718271}. There is no cup-product type algebra structure in algebraic $L$--theory, so we have not yet completely answered Levine's question. However we conjecture that the techniques of double $L$--theory could be modified to include this sort of product structure and answer this question affirmatively.
\end{remark}

The following corollary was already shown to be true by Kearton \cite[Corollary 3]{MR0385873}, but Proposition \ref{prop:invariant}, combined with Proposition \ref{prop:welldeflagrang2} gives a different proof.

\begin{corollary}\label{cor:blanchfield}The Blanchfield form of an odd-dimensional doubly slice knot is hyperbolic.
\end{corollary}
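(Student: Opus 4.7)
The plan is to combine the two key results immediately preceding this corollary, namely Proposition \ref{prop:invariant} and Proposition \ref{prop:welldeflagrang2}. Given a doubly slice knot $K \co S^{2k+1} \hookrightarrow S^{2k+3}$, Proposition \ref{prop:invariant} tells us that the Blanchfield complex $(C_K,\phi_K)$ is $(\Lambda,P)$--double-nullcobordant. By Corollary \ref{cor:Pacyclic}, $(C_K,\phi_K)$ is moreover a $(2k+3)$--dimensional $P$--acyclic symmetric ($\eps = 1$) Poincar\'e complex, which is exactly the type of complex to which Proposition \ref{prop:welldeflagrang2} applies (with the $k$ of that proposition matching the $k$ here).

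Next I would observe that the Blanchfield form $Bl$ of the knot is by definition the $(-1)^k$--symmetric linking form $\lambda_{\phi_K}$ on $f(H^{k+2}(C_K))$ produced by Proposition \ref{prop:welldeflagrang2}. Therefore conclusion (ii) of that proposition applies directly to yield that $(f(H^{k+2}(C_K)),\lambda_{\phi_K}) = Bl$ is hyperbolic as a $(-1)^k$--symmetric linking form over $(\Lambda, P)$, which via the category equivalence of Proposition \ref{prop:cartmorph} (and in particular the corresponding equivalence for lagrangians) gives hyperbolicity over $(\Z[z,z^{-1}],P)$ as required.

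There is essentially no obstacle here: the work has all been done in establishing the double-cobordism-theoretic invariance of double nullcobordisms in Proposition \ref{prop:invariant}, together with the derivation of lagrangians for the Blanchfield form from the algebra of $(\Lambda,P)$--double-nullcobordisms in Proposition \ref{prop:welldeflagrang2}. The only minor care to be taken is in verifying that the signs and dimensions line up correctly (which they do: $n+1 = 2k+2$, the complex is $(2k+3)$--dimensional symmetric, and the resulting linking form on $f(H^{k+2})$ has the expected $(-1)^k$--symmetry matching the usual convention for the Blanchfield form), and in noting the harmless passage between $\Lambda$ and $\Z[z,z^{-1}]$ coefficients via Proposition \ref{prop:cartmorph}.
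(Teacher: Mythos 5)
Your proposal is correct and is exactly the paper's argument: the paper derives this corollary by combining Proposition \ref{prop:invariant} (the Blanchfield complex of a doubly slice knot is double-nullcobordant) with Proposition \ref{prop:welldeflagrang2}(ii), just as you do. Your additional care over the passage between $\Lambda$ and $\Z[z,z^{-1}]$ coefficients via Proposition \ref{prop:cartmorph} is consistent with how the paper handles this point inside the proof of Proposition \ref{prop:invariant}.
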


We now have the following group homomorphisms obstructing double knot-cobordism:

\begin{corollary}For $n\geq 1$, there is a well-defined homomorphism\[\sigma^{DL}\co \mathcal{DC}_n\to DL^{n+1}(\Lambda,P,-1);\qquad [K]\mapsto (C_K,\phi_K).\]When $n=2k+1$ there is a well-defined homomorphism\[\sigma^{DW}\co \mathcal{DC}_n\to DW^{(-1)^k}(\Lambda,P);\qquad [K]\mapsto (f(H^{k+2}(C)),\lambda_{\phi_K}),\]and for any choice of Seifert surface $F$ there is a well-defined homomorphism\[\sigma^{\widehat{DW}}\co \mathcal{DC}_n\to \widehat{DW}_{(-1)^{k+1}}(\Z);\qquad [K]\mapsto (f(H^{k+1}(F)),\psi).\](This final morphism uses Theorem \ref{thm:covering}.)
\end{corollary}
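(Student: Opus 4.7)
The plan is to verify each of the three assertions in turn: that the assignments descend to the monoid $Knots_n$ of ambient isotopy classes, that they are additive under connected sum, and that they vanish on the submonoid of doubly slice knots. The last point is the real content; the first two are largely bookkeeping on top of results already established.

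For $\sigma^{DL}$, I would first recall that the construction of the Blanchfield complex via Ranicki's symmetric construction on the meridian map $(f,\partial f)\co (X_K,\partial X_K)\to (D^{n+1}\times S^1, S^n\times S^1)$ produces a well-defined homotopy equivalence class of $(n+2)$--dimensional $P$--acyclic symmetric Poincar\'{e} complexes over $\Lambda$, by Corollary \ref{cor:Pacyclic} and the naturality of the symmetric construction under homotopy equivalences covering the identity on $\partial X_K$. Additivity under $\#$ follows from the decomposition of $X_{K_1\# K_2}$ along a meridional $(n+1)$--disc into $X_{K_1}$ and $X_{K_2}$ glued along $S^n\times D^1$: the resulting chain-level Mayer--Vietoris sequence, combined with the fact that the Blanchfield complex strips off a contractible $C_*(\overline{D^{n+1}\times S^1})$ summand as in Proposition \ref{prop:auto}, yields a homotopy equivalence of symmetric complexes $(C_{K_1\#K_2},\phi_{K_1\#K_2})\simeq (C_{K_1}\oplus C_{K_2},\phi_{K_1}\oplus\phi_{K_2})$. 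Vanishing on doubly slice knots is precisely the content of Proposition \ref{prop:invariant}, so $\sigma^{DL}$ descends to $\mathcal{DC}_n$ and is a homomorphism.

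For $\sigma^{DW}$ in the case $n=2k+1$, I would use the chain-level definition of the Blanchfield form in Proposition \ref{prop:welldeflagrang2} applied to $(C_K,\phi_K)$. Well-definedness and additivity then follow from those of $\sigma^{DL}$ by passage to the middle-dimensional cohomology, since the formula for $\lambda_{\phi_K}$ is functorial in the homotopy equivalence class of $(C_K,\phi_K)$. For vanishing on doubly slice knots, any double-nullcobordism of $(C_K,\phi_K)$ produced by Proposition \ref{prop:invariant} induces a hyperbolic decomposition of $(f(H^{k+2}(C_K)),\lambda_{\phi_K})$ by Proposition \ref{prop:welldeflagrang2}(ii), showing this class represents zero in $DW^{(-1)^k}(\Lambda,P)$.

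For $\sigma^{\widehat{DW}}$, the nontrivial point is independence of the choice of Seifert surface. I would invoke the covering isomorphism Theorem \ref{thm:covering}: $B\co \widehat{DW}_{(-1)^{k+1}}(\Z)\xrightarrow{\cong} DW^{(-1)^k}(\Lambda,P)$. Under the classical cut-and-paste construction of the infinite cyclic cover from any Seifert surface $F$, the image of the Seifert form $(f(H^{k+1}(F)),\psi)$ under $B$ is precisely the Blanchfield form $(f(H^{k+2}(C_K)),\lambda_{\phi_K})$. Since the right-hand side depends only on $K$, and $B$ is an isomorphism, the double Witt class of $(f(H^{k+1}(F)),\psi)$ is independent of the choice of $F$, and additivity under $\#$ follows from additivity of $\sigma^{DW}$. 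Vanishing on doubly slice knots is now immediate, as $B$ carries $\sigma^{\widehat{DW}}([K])$ to $\sigma^{DW}([K])=0$ and $B$ is injective.

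The main technical obstacle is verifying additivity of $\sigma^{DL}$ at the chain level: one must check that the kernel pair of the meridian map for $K_1\#K_2$ is homotopy equivalent, as a symmetric pair, to the direct sum of the kernel pairs for $K_1$ and $K_2$. This is a standard but somewhat delicate Mayer--Vietoris argument for the symmetric construction, and is where one has to take care that the chain-level Umkehr maps and the symmetric structures are compatible with the gluing.
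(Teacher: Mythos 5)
Your proposal is correct and takes essentially the same route as the paper: the corollary appears there without a separate proof, being immediate from Proposition \ref{prop:invariant} (vanishing of the Blanchfield complex class on doubly slice knots), Proposition \ref{prop:welldeflagrang2} together with Corollary \ref{cor:blanchfield} (for $\sigma^{DW}$), and Theorem \ref{thm:covering} (for $\sigma^{\widehat{DW}}$), with well-definedness and additivity of $K\mapsto(C_K,\phi_K)$ under connected sum cited from Ranicki and the author's thesis rather than reproved. The only quibble is in your sketch of that additivity: the two knot exteriors are glued along the exterior of the trivial $(S^{n+1},S^{n-1})$ pair, a copy of $D^n\times S^1$ (a homology circle), rather than $S^n\times D^1$, but since the paper itself defers this step to the cited sources the slip does not affect the overall argument.
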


We also combine some of the algebraic results from Section \ref{sec:DWgroups} to prove a new result about Seifert forms for knots.

\begin{theorem}\label{thm:theorem}Every Seifert form for an odd-dimensional doubly slice knot $K$ is hyperbolic.
\end{theorem}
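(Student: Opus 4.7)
The plan is to combine three ingredients already established in the paper: the hyperbolicity of the Blanchfield form of a doubly slice knot, the covering isomorphism relating Seifert and Blanchfield double Witt groups, and the ``stably hyperbolic equals hyperbolic'' result for Seifert forms over $\Z$.

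First I would invoke Corollary \ref{cor:blanchfield} (or equivalently, Proposition \ref{prop:invariant} combined with Proposition \ref{prop:welldeflagrang2}(ii)): since $K$ is doubly slice, its Blanchfield complex $(C_K,\phi_K)$ is $(\Lambda,P)$--double-nullcobordant, and therefore the Blanchfield form $(f(H^{k+2}(C_K)),\lambda_{\phi_K})$ is hyperbolic as a non-singular $(-1)^k$--symmetric linking form over $(\Lambda,P)$. In particular, its class vanishes in $DW^{(-1)^k}(\Lambda,P)$.

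Next I would transport this vanishing back along the covering isomorphism of Theorem \ref{thm:covering}. For a chosen Seifert surface $F$, the Seifert form $(f(H^{k+1}(F)),\psi)$ is a non-singular $(-1)^{k+1}$--symmetric Seifert form over $\Z$, whose image under the algebraic covering morphism
\[B\co \widehat{DW}_{(-1)^{k+1}}(\Z)\xrightarrow{\cong} DW^{(-1)^k}(\Lambda,P)\]
is exactly the Witt class of the Blanchfield form, since the algebraic cut-and-paste construction of \cite{OrsonA} is the Witt-level incarnation of the standard topological procedure for passing from Seifert form to Blanchfield form (as recalled in the subsection on Seifert and Blanchfield forms). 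Thus the class $[(f(H^{k+1}(F)),\psi)]\in\widehat{DW}_{(-1)^{k+1}}(\Z)$ vanishes, so the Seifert form is \emph{stably hyperbolic}.

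Finally, I would apply Corollary \ref{cor:stablyhypishypseif} (the ``stably hyperbolic $=$ hyperbolic'' statement for Seifert forms, which is the 0--dimensional ultraquadratic double $L$--group incarnation of Proposition \ref{prop:isogroups}) to conclude that the Seifert form is itself hyperbolic, not merely stably so. The main conceptual obstacle is ensuring the naturality of the identification between the Seifert form class and the Blanchfield form class under $B$, but this is precisely the content of the covering isomorphism as formulated in \cite{OrsonA}, so no additional work is required here. The argument is independent of the choice of Seifert surface since the conclusion applies to \emph{every} Seifert form.
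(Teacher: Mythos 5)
Your proposal is correct and is essentially identical to the paper's own argument: both combine Corollary \ref{cor:blanchfield} with the covering isomorphism of Theorem \ref{thm:covering} to conclude the Seifert form is stably hyperbolic, and then apply Corollary \ref{cor:stablyhypishypseif} to upgrade this to hyperbolic. Your added remark about the compatibility of $B$ with the Seifert--Blanchfield passage is exactly what the paper delegates to \cite{OrsonA}, so no gap remains.
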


\begin{proof}Combining Theorem \ref{thm:covering} and Corollary \ref{cor:blanchfield} shows that every Seifert form for $K$ is \emph{stably} hyperbolic. But by Corollary \ref{cor:stablyhypishypseif}, stably hyperbolic Seifert forms are moreover hyperbolic.
\end{proof}

We now have several algebraic responses to Question \ref{q:stablyhypishyp}:

\begin{theorem}\label{thm:stablyresults}Suppose for $n\geq1$ that an $n$--knot $K$ is stably double slice. Then the class $\sigma^{DL}(K)\in DL^{n+1}(\Lambda,P)$ of the Blanchfield complex vanishes. If $n=2k+1$ then the Witt classes of the Blanchfield form $\sigma^{DW}(K)\in DW^{(-1)^k}(\Lambda,P)$ and any choice of Seifert form $\sigma_{\widehat{DW}}(K)\in \widehat{DW}_{(-1)^{k+1}}(\Z)$ vanish.
\end{theorem}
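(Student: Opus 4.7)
The plan is to observe that the theorem is essentially a formal consequence of the machinery already assembled in the paper, and the work reduces to identifying in which earlier result each implication sits. Unpacking the definition, $K$ being stably doubly slice means $[K]=0\in \mathcal{DC}_n$, equivalently that $K\# J$ is doubly slice for some doubly slice $J$.

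First I would dispatch the statement about $\sigma^{DL}$. The corollary immediately preceding the theorem records that $\sigma^{DL}\co \mathcal{DC}_n\to DL^{n+1}(\Lambda,P,-1)$ is a well-defined group homomorphism: this rests on Proposition \ref{prop:invariant}, which shows that a doubly slice knot has algebraically double-nullcobordant Blanchfield complex, together with the additivity of the Blanchfield complex under connected sum. Since $\sigma^{DL}$ factors through $\mathcal{DC}_n$, and $[K]=0$ there, we conclude $\sigma^{DL}(K)=0$.

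Second, assume $n=2k+1$ and consider $\sigma^{DW}$. The Blanchfield form of $K$ is extracted from the Blanchfield complex $(C_K,\phi_K)$ via the construction in Proposition \ref{prop:welldeflagrang2}. That same proposition shows that if $(C_K,\phi_K)$ admits a $(\Lambda,P)$--double-nullcobordism then the resulting Blanchfield form is hyperbolic. Combined with Proposition \ref{prop:invariant}, this means the assignment $K\mapsto (f(H^{k+2}(C_K)),\lambda_{\phi_K})$ sends doubly slice knots to the zero class in $DW^{(-1)^k}(\Lambda,P)$, hence descends to a homomorphism from $\mathcal{DC}_n$. Applying this homomorphism to the trivial class $[K]\in \mathcal{DC}_n$ gives $\sigma^{DW}(K)=0$.

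Finally, to pass from the Blanchfield form to an arbitrary Seifert form, I would invoke the covering isomorphism of Theorem \ref{thm:covering}, which gives a natural isomorphism
\[
B\co \widehat{DW}_{(-1)^{k+1}}(\Z)\xrightarrow{\cong} DW^{(-1)^k}(\Lambda,P)
\]
under which any choice of Seifert form $\sigma_{\widehat{DW}}(K)$ maps to the Blanchfield form $\sigma^{DW}(K)$. Since $B$ is an isomorphism and we have just shown $\sigma^{DW}(K)=0$, we conclude $\sigma_{\widehat{DW}}(K)=0$; in particular this agrees with Theorem \ref{thm:theorem} in the strictly doubly slice case. There is no serious obstacle here: the content of the theorem has been pre-loaded into Proposition \ref{prop:invariant}, Proposition \ref{prop:welldeflagrang2}, and the covering isomorphism, and the only task is to assemble these into a single statement about the kernel of the map $Knots_n\to \mathcal{DC}_n$.
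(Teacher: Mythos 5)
Your proposal is correct and follows essentially the same route as the paper: the theorem is stated there as an immediate consequence of the preceding corollary, which establishes (via Proposition \ref{prop:invariant}, Proposition \ref{prop:welldeflagrang2} and Theorem \ref{thm:covering}) that $\sigma^{DL}$, $\sigma^{DW}$ and $\sigma^{\widehat{DW}}$ are well-defined homomorphisms out of $\mathcal{DC}_n$, so they vanish on the class $[K]=0$ of a stably doubly slice knot. Your unpacking of which earlier result carries each implication matches the paper's intended argument.
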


If we assume we are dealing with a simple knot, the algebraic results of Section \ref{sec:DWgroups} yield the following partial answer to Question \ref{q:stablyhypishyp}.

\begin{theorem}\label{thm:stoltzfus}For odd $n=2k+1>1$, a simple $n$--knot $K$ has $[K]=0\in \mathcal{DC}^{simp}_n$ if and only if $K$ is doubly slice.
\end{theorem}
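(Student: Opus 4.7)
The forward direction is immediate from the definition of $\mathcal{DC}^{\text{simp}}_n$. For the converse, I would begin with the hypothesis $[K]=0\in\mathcal{DC}_n^{\text{simp}}$, so there exist simple doubly slice $n$--knots $J, J'$ with $K\# J\simeq J'$; in particular $K\# J$ is (simple and) doubly slice. The goal is then to upgrade this ``stably doubly slice'' data to the genuine doubly slice property, using the algebraic results of Section \ref{sec:DWgroups} together with the classical Kearton--Levine classification of simple high-dimensional knots.

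The plan is as follows. First, by Proposition \ref{prop:invariant} combined with Proposition \ref{prop:welldeflagrang2}(ii) (equivalently Corollary \ref{cor:blanchfield}), the Blanchfield form $\mathrm{Bl}(K\# J)$ is hyperbolic. The Blanchfield form is additive under connected sum, and $\mathrm{Bl}(J)$ is itself hyperbolic because $J$ is doubly slice. Hence $\mathrm{Bl}(K)$ is \emph{stably} hyperbolic in $DW^{(-1)^k}(\Lambda,P)$. At this point I would apply Corollary \ref{cor:stablyhypishyp} (the ``stably hyperbolic equals hyperbolic'' result for linking forms), which uses that $\Lambda$ contains the half-unit $(1-z)^{-1}$, to conclude that $\mathrm{Bl}(K)$ is in fact hyperbolic.

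To finish, I would invoke the classical realisation theorem of Levine, Kearton, and Stoltzfus for simple odd-high-dimensional knots: for $k\geq 1$, a simple $(2k+1)$--knot is doubly slice if and only if its Blanchfield form is hyperbolic. The non-trivial direction realises a pair of complementary lagrangians in $\mathrm{Bl}(K)$ by a pair of complementary slice discs; the argument proceeds via ambient surgery below the middle dimension, using the fact that on a simple knot such surgeries are unobstructed, and that the Blanchfield form classifies simple knots up to ambient isotopy. Applied to the hyperbolic form $\mathrm{Bl}(K)$ produced in the previous paragraph, this exhibits $K$ as doubly slice.

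The main obstacle is really this final classical realisation step, which is not algebraic but geometric and relies on simple-knot surgery techniques outside the scope of this paper. Every genuinely new ingredient sits in the middle step: what makes the simple stably-vs-unstably doubly slice question collapse is precisely the algebraic content of Corollary \ref{cor:stablyhypishyp}, whose proof goes through the identification of $DW^\eps(\Lambda,P)$ with $DL^0(\Lambda,P,\eps)$ and the double-cobordism machinery of Section \ref{sec:DLgroups}.
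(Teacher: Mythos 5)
Your first two steps coincide with the paper's proof: from $[K]=0\in\mathcal{DC}^{\text{simp}}_n$ one gets that the Blanchfield form is stably hyperbolic (equivalently, represents $0$ in $DW^{(-1)^k}(\Lambda,P)$), and then Corollary \ref{cor:stablyhypishyp} upgrades this to genuinely hyperbolic; this is exactly where the paper's new algebra enters, and your identification of that as the key step is right.

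The divergence, and the place where your argument has a real gap, is the end-game. You quote as a black box a ``classical realisation theorem'' that a simple $(2k+1)$--knot, $k\geq 1$, is doubly slice if and only if its \emph{Blanchfield} form is hyperbolic, justified by ``the Blanchfield form classifies simple knots up to ambient isotopy.'' The classical surgery realisation results (the ones the paper cites: Sumners' Theorem 3.1 for $k>1$ and a separate reference for $k=1$) are stated at the level of a \emph{hyperbolic Seifert form}: one takes a half-basis of a hyperbolic Seifert matrix and surgers the Seifert surface to produce the two complementary slice discs. Passing from a hyperbolic Blanchfield form to a hyperbolic Seifert form is not formal -- a priori hyperbolicity of the Blanchfield form only makes each Seifert form trivial in the double Witt group, i.e.\ \emph{stably} hyperbolic -- and the paper supplies precisely this missing passage via the covering isomorphism (Theorem \ref{thm:covering}) together with Corollary \ref{cor:stablyhypishypseif} (``stably hyperbolic $=$ hyperbolic'' for Seifert forms over $\Z$), before invoking the geometric surgery step. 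Moreover, the classification of simple knots by the Blanchfield pairing that you lean on is a theorem for $n\geq 5$; the case $k=1$ (3--knots in $S^5$) needs separate treatment, which the paper handles with a dedicated citation. So as written your final step either presupposes a Blanchfield-level theorem that is not cleanly available for all $k\geq1$, or silently re-proves the Seifert-form hyperbolicity that the paper's own algebra is designed to deliver; the fix is to insert Theorem \ref{thm:covering} and Corollary \ref{cor:stablyhypishypseif} to get a hyperbolic Seifert form, and only then appeal to the Seifert-level realisation results.
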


\begin{proof}`If' is clear. Conversely, if $\sigma^{DL}(K)=0$, we have that the Blanchfield form $(T,\lambda)$ for $K$ has $(T,\lambda)=0\in DW^{(-1)^k}(\Lambda,P)$. But by Corollary \ref{cor:stablyhypishyp}, this means $(T,\lambda)$ is hyperbolic. Hence any Seifert surface $F$ for $K$ has hyperbolic Seifert form by Corollary \ref{cor:stablyhypishypseif}. Take a basis of $H^{k+1}(F;\Z)$ with respect to which the matrix of the Seifert form is hyperbolic. The Poincar\'{e} dual basis to this can be realised by framed, embedded $(k+1)$--spheres which can be used as instructions for surgery on $F$ to realise two complementary slice discs as in \cite[Theorem 3.1]{MR0290351} (case $k>1$) and \cite{MR0380817} (case $k=1$).
\end{proof}

This is not the first proof of Theorem \ref{thm:stoltzfus}. In \cite{MR833015}, Bayer--Fl{\"u}ckiger and Stoltzfus obtain a slightly less general form of Corollary \ref{cor:stablyhypishyp} by very different methods to our own. The authors derive Theorem \ref{thm:stoltzfus} from this.

We finish with some remarks highlighting some subtleties of the doubly slice problem, contrasting the slice problem, and indicating possible directions for future investigations.
\begin{remark}$\,$

\begin{flushenumerate}
\item According to the work of Ruberman \cite[Theorem 4.17]{MR709569}, \cite[Theorem 3.3]{MR933307}, in every odd dimension, there exists an infinite family of knots with hyperbolic Blanchfield form but which are not doubly slice. When $n\neq 1$ all knots in the family have exteriors which are homotopy equivalent (rel.\ boundary, preserving meridians) to one another and to a doubly slice knot. When $n=1$ the exteriors have the same $\Z[\Z]$--homology type. There is a similar result in all even-dimensions \cite{MR709569}, \cite{MR933307}. One consequence is that there can be no general procedure that modifies a knot within its double knot-cobordism class to be simple. \emph{There is no double surgery below the middle dimension.} That is\[\mathcal{DC}_n\not\cong\mathcal{DC}^{simp}_n.\]

\item The mechanism for detecting non-doubly slice knots in Ruberman's work is a high-dimensional application of the Casson--Gordon invariants. The definition and non-vanishing of these invariants requires interesting cyclic representations of the fundamental group $\pi_1(S^{n+2}\sm K)$. There are no known Ruberman-type examples for $\pi_1(S^{n+2}\sm K)\cong\Z$. Our groups $DL^{n+1}(\Lambda,P)$ may form part of a full classification of the doubly slice knots with $\pi_1(S^{n+2}\sm K)\cong\Z$. Note as well that although throughout Section \ref{sec:knots} we have worked with the coefficient rings $\Z[\Z]$ and $\Lambda$, the algebraic framework we have developed in this paper is robust enough to handle non-abelian fundamental groups, which we hope will be the topic of future work.

\item One might suppose that the doubly slice obstructions seen by the Blanchfield form over $\Z[\Z]$ encompass all abelian homological obstructions and that the use of the Blanchfield \emph{complex} over $\Z[\Z]$ is redundant as it does not use the fundamental group $\pi_1(S^{n+2}\sm K)$ (after all, this is the case in the slice problem). But in fact, abelian homology-level secondary obstructions were identified by Levine \cite[p.\ 252]{MR718271}. These homology-level obstructions involve the ring structure in cohomology. Product structures like this are not well accounted for in $L$--theory and are not seen by a class in double $L$--theory.\end{flushenumerate}\end{remark}

\bibliographystyle{gtart}
\def\MR#1{}
\bibliography{writeup}

\end{document}